\documentclass[11pt]{article}
\usepackage{amsfonts,amsmath, amssymb,latexsym,mathrsfs,rotating}
\usepackage[all,cmtip]{xy}
\setlength{\textheight}{8.95in}
\setlength{\textwidth}{6.5in}
\setlength{\topmargin}{0.0in}
\setlength{\headheight}{0.0in}
\setlength{\headsep}{0.0in}
\setlength{\leftmargin}{0.0in}
\setlength{\oddsidemargin}{0.0in}
\setlength{\parindent}{3pc}
\def\Z{{\mathbb Z}}

\def\SL{{\rm SL}}
\def\GL{{\rm GL}}
\def\Gal{{\rm Gal}}
\def\PGL{{\rm PGL}}

\def\Sym{{\rm Sym}}

\def\SO{{\rm SO}}
\def\P{{\mathbb P}}
\def\Disc{{\rm Disc}}
\def\diag{{\rm diag}}

\def\Aut{{\rm Aut}}

\def\gen{{\rm gen}}

\def\sqf{{\rm sqf}}
\def\fund{{\rm fund}}

\def\Vol{{\rm Vol}}
\def\R{{\mathbb R}}
\def\A{{\mathbb A}}
\def\F{{\mathbb F}}
\def\FF{{\mathcal F}}

\def\Q{{\mathbb Q}}

\def\C{{\mathcal C}}

\def\Z{{\mathbb Z}}
\def\P{{\mathbb P}}
\def\F{{\mathbb F}}
\def\Q{{\mathbb Q}}
\def\C{{\mathbb C}}

\newtheorem{theorem}{Theorem}[section]
\newtheorem{corollary}[theorem]{Corollary}
\newtheorem{lemma}[theorem]{Lemma}

\newtheorem{remark}[theorem]{Remark}
\newenvironment{proof}{\noindent {\bf Proof:}}{$\Box$ \vspace{2 ex}}

\title{The geometric sieve and the density of squarefree values of invariant polynomials}

\author{Manjul Bhargava}

\begin{document}
\maketitle
\begin{abstract}
We 
develop a method 
for determining the density of squarefree values taken by certain
multivariate integer polynomials that are invariants for the action of
an algebraic group on a vector space.  The method is shown to apply to
the discriminant polynomials of various prehomogeneous and coregular
representations where generic stabilizers are finite.  This has
applications to a number of arithmetic distribution questions, e.g.,
to the density of small degree number fields
having squarefree discriminant, and the density of certain
unramified {\it nonabelian} extensions of quadratic fields. In
separate works, the method forms an important ingredient in
establishing lower bounds on the average orders of Selmer groups of
elliptic curves.
\end{abstract}

\setcounter{tocdepth}{2}

\section{Introduction}

The purpose of this article is to develop a method for determining the density of
squarefree values taken by certain multivariate integer polynomials that are invariants for 
an algebraic group acting on a vector space. 
 In the case of general polynomials in one or two variables having degree at most three or
six, respectively, methods of Hooley~\cite{Hooley} or
Greaves~\cite{Greaves}, respectively, may be applied; in other cases, if the degree of the polynomial is quite small relative to the number
of variables, then the circle method may be used to extract squarefree
values of the polynomial in question.  
In contrast, our method may be 
applied to polynomials of high degree---even when the degree and the
number of variables are comparable---so long as the polynomial has some
extra {structure}, such as symmetry under the action of a ``suitably large'' algebraic
group defined over $\Z$ (this condition will be made more precise in Section~2).

\subsection{The density of number fields having squarefree discriminant}

The most classical specific cases of arithmetic interest that our
method addresses is that of determining the density of small degree
number fields having squarefree discriminant.  Building on the works of
Levi~\cite{Levi}, Wright--Yukie~\cite{WY}, and
Gan--Gross--Savin~\cite{GGS}, it was shown in~\cite{DF}, \cite{hcl3},
and~\cite{hcl4} that
the integers that occur as the discriminants of orders in cubic,
quartic, and quintic number fields, respectively, correspond to
suitable integer values taken by certain fixed multivariate integral
polynomials $f_3$, $f_4$, and $f_5$, having degrees 4, 12, and 40 in
4, 12, and 40 variables, respectively.  This correspondence between
number field discriminants and integers represented by these special
polynomials was indeed what was used in~\cite{DH}, \cite{dodqf}, 
and~\cite{dodpf}, in conjunction with geometry-of-numbers arguments, to
determine the density of discriminants of cubic, quartic, and quintic
number fields, respectively.
 
To determine the density of such number fields having squarefree
discriminant, we must thus determine the density of squarefree integer
values taken by these special polynomials $f_3$, $f_4$, and $f_5$.
As we have noted, for general polynomials of large degree $d$ in about
$d$ variables, this is an unsolved problem.  However, using the
structure of these special polynomials---namely, that they are
invariants for the action of a ``suitably large'' algebraic group---we
determine in \S4 the density of squarefree values taken by these polynomials.

As a consequence, we prove that a positive density of all $S_n$-number fields of degrees $n=3$, $4$, and $5$ have squarefree discriminant, and we determine this density precisely.  We similarly determine the density of such number fields that have fundamental discriminant.
Specifically, we~prove:

\begin{theorem}\label{sqfdisc}
  Let $n=3$, $4$, or $5$, and let $N_n^\sqf(X)$ $($resp.\
  $N_n^\fund(X))$ denote the number of isomorphism classes of number
  fields of degree $n$ having squarefree 
$($resp.\ fundamental$)$ discriminant of absolute value less than $X$.  Then

\vspace{-.2in}
\begin{eqnarray*}
{\rm (a)} &
N_n^\sqf(X)
\,\,=\, 
\displaystyle\frac{r_2(S_n)}{3n!}\zeta(2)^{-1}\cdot X+o(X);
\\[.1in]
{\rm (b)} &
N_n^\fund(X)
=\, 
\displaystyle\frac{r_2(S_n)}{2n!}\zeta(2)^{-1}\cdot X+o(X),
\\[-.075in]
\end{eqnarray*}
where $r_2(S_n)$ denotes the number of $2$-torsion elements in the symmetric group $S_n$.
\end{theorem}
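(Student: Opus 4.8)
The plan is to reduce the counting of degree-$n$ number fields with squarefree (resp.\ fundamental) discriminant to counting $\GL_n(\Z)$-equivalence classes (or the appropriate group for $n=3,4,5$) of integral forms in the relevant prehomogeneous vector space whose discriminant invariant $f_n$ is squarefree (resp.\ of the form "$\pm 1$ or $\pm 4$ times an odd squarefree integer"), and then invoke the squarefree-density results for $f_n$ that the paper establishes in \S4 together with the density-of-discriminants asymptotics of \cite{DH}, \cite{dodqf}, \cite{dodpf}. So the first step is to set up the parametrization: by \cite{DF}, \cite{hcl3}, \cite{hcl4}, orders in degree-$n$ fields correspond to $\GL$-orbits on lattices in $V_n(\Z)$, with the discriminant of the order equal to $f_n$ evaluated on (any representative of) the orbit. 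The maximal orders — hence the number fields themselves — correspond to the orbits satisfying a collection of local ("maximality") conditions at each prime $p$, and crucially, an order is maximal at $p$ whenever $p^2 \nmid f_n$ of the form, i.e.\ the non-maximal orbits are supported on forms whose discriminant is divisible by $p^2$. Thus a field has squarefree discriminant precisely when the associated maximal order has squarefree discriminant, and this happens exactly when $f_n$ takes a squarefree value on the orbit \emph{and} automatically the maximality conditions are then satisfied at every prime.

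The second step is the main counting. One counts $\GL$-orbits on $V_n(\Z)$ with $|f_n| < X$ and $f_n$ squarefree, using the geometry-of-numbers / averaging machinery already developed in \cite{DH}, \cite{dodqf}, \cite{dodpf} to handle the main term, but now with the extra squarefree restriction inserted via the geometric sieve of \S2--\S3 of the present paper. Concretely, the count of all orbits with $|f_n|<X$ is $c_n X + o(X)$ for an explicit constant $c_n$; imposing "$f_n$ squarefree" multiplies the main term by $\prod_p (1 - c_{n,p})$ where $c_{n,p}$ is the $p$-adic density of forms with $p^2 \mid f_n$, and the geometric sieve is exactly what is needed to show the tail contribution from large $p$ (where one cannot compute orbit-by-orbit) is $o(X)$ — this is the step the paper's title refers to. The product $\prod_p(1-c_{n,p})$, after accounting for which forms with $p^2\mid f_n$ correspond to maximal vs.\ non-maximal orders, collapses to $\zeta(2)^{-1}$ times the "mass" factor $r_2(S_n)/(3\cdot n!)$: the $\zeta(2)^{-1}=\prod_p(1-p^{-2})$ comes from the density of squarefree integers, and the $r_2(S_n)/n!$ is the known asymptotic proportion (Davenport--Heilbronn / Bhargava) of degree-$n$ fields overall, with the archimedean splitting types weighted by the number of $2$-torsion elements of $S_n$ (real places contribute, complex place pairs do not), while the extra $1/3$ is the local factor at "the prime $\infty$" or rather the normalization reconciling the orbit count with the field count — one must check this constant bookkeeping carefully against \cite{DH}, \cite{dodqf}, \cite{dodpf}. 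For part (b), one repeats the argument allowing the discriminant to be a fundamental discriminant, i.e.\ squarefree or $4$ times an odd squarefree integer $\equiv 2,3 \pmod 4$; the local computation at $p=2$ changes, and the ratio of the fundamental-discriminant density to the squarefree-discriminant density works out to $3/2$, giving the stated $r_2(S_n)/(2\cdot n!)$.

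The main obstacle, and the technical heart of the matter, is the uniform control of the "large prime" tail: showing that the number of $\GL_n(\Z)$-orbits on $V_n(\Z)$ with $|f_n|<X$ and $p^2 \mid f_n$ for some prime $p > M$ is $o(X)$ as $M\to\infty$, uniformly. For the $p$ in a bounded range one can use the explicit local density computations of \S4 and the equidistribution of orbits in congruence classes established in \cite{DH}, \cite{dodqf}, \cite{dodpf}; but for $p$ large relative to $X^{1/2}$ the number of available forms is too sparse for those methods, and one instead needs the geometric-sieve estimate of \S3, which bounds the count of lattice points in a fundamental domain lying on the (codimension-$\geq 2$) subvariety cut out by "$p^2 \mid \disc$" — the key geometric input being that the locus where $f_n$ and all its relevant derivatives vanish mod $p$ has codimension at least $2$, so that the number of integer points on it in an expanding region is $O(X/p)$ or better, making the sum over $p>M$ convergent to something $\to 0$. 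Verifying this codimension-$\geq 2$ condition for each of $f_3$, $f_4$, $f_5$ (equivalently, that the relevant representation is "suitably large" in the sense of \S2) is where the special structure of these prehomogeneous representations is essential, and is presumably carried out in \S4; assembling these pieces — parametrization, main term, sieve tail, and the arithmetic of the local factors — into the clean closed form above is the content of the proof.
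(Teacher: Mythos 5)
Your overall strategy (parametrize fields by $G(\Z)$-orbits on $V(\Z)$ via \cite{DF}, \cite{hcl3}, \cite{hcl4}, note that squarefree discriminant forces maximality, count orbits of bounded discriminant with the squarefree condition imposed by a sieve, then compute the local densities) is the same skeleton as the paper's, which deduces the theorem from the general local-conditions count (Theorem~\ref{gensqfree}) together with the mass formulas of \cite{imrn}. But there is a genuine gap at what you yourself call the technical heart: you assert that the entire large-prime tail, i.e.\ orbits with $p^2\mid f_n$ for some $p>M$, is controlled by the Ekedahl-type geometric sieve applied to the codimension-$\geq2$ subscheme where $f_n$ and its relevant derivatives vanish mod $p$. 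That subscheme only captures the points where $f_n$ is \emph{strongly} a multiple of $p^2$ (the set $W_p^{(1)}$, vanishing for ``mod $p$ reasons''). The points where $f_n$ is \emph{weakly} a multiple of $p^2$ (the set $W_p^{(2)}$) reduce mod $p$ merely to the codimension-one discriminant hypersurface --- e.g.\ for $f_3$, to binary cubics with a double root --- and the extra divisibility is a genuinely mod-$p^2$ condition that the closed-point sieve cannot see. For $p$ up to about $X^{1/(2d)}$ a direct covering/density argument handles $W_p^{(2)}$, but for larger $p$ no such elementary bound is available; this is exactly the regime where Granville and Poonen must invoke the ABC Conjecture, so your argument as written does not close.

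The paper's essential new ingredient, absent from your proposal, is the use of the group action to convert the weak mod-$p^2$ condition into something countable: for each $v\in W_p^{(2)}$ one exhibits an explicit $g\in G(\Q)$ (e.g.\ $\bigl(\begin{smallmatrix}1&\\&1/p\end{smallmatrix}\bigr)$ after scaling, in the cubic case) with $gv\in V(\Z)$ and $|f_n(gv)|=|f_n(v)|/p^2$, and one checks that each $G(\Z)$-class of targets arises boundedly often (Condition~6(i),(iii) of Section~2). Combined with the uniform upper bounds on the number of irreducible $G(\Z)$-orbits of bounded discriminant from \cite{Davenport2,dodqf,dodpf}, summing over $p>X^{1/(2d)}$ then gives the needed $o(X)$ tail for $W_p^{(2)}$, while the geometric sieve is reserved for $W_p^{(1)}$. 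You would need to supply this step (or some substitute for it) before your sieve argument is complete; the remaining bookkeeping of constants is then the local mass computation of \cite[Props.~2.2, 2.4]{imrn}, where the $2/3$ ratio between the squarefree and fundamental counts comes from the $2$-adic mass ($1$ versus $3/2$), not from an archimedean normalization as your sketch suggests.
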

Note that Theorem~\ref{sqfdisc} is true also for $n=2$,
provided that we count each quadratic field~$K$ with weight $\frac{1}{2}$
(i.e., with weight $\frac{1}{\#\Aut(K)}$). We conjecture that Theorem~\ref{sqfdisc} holds for general~$n$.

In conjunction with the main results of \cite{DH}, \cite{dodqf}, and
\cite{dodpf}, which give the total density of discriminants of cubic,
quartic, and quintic fields, respectively, we conclude:

\begin{corollary}\label{sqfreedisccor}
When ordered by absolute discriminant, the proportion of $S_n$-number
fields of degree~$n$ $(n\in\{2,\ldots,5\})$ having 
fundamental discriminant is given by
$$\left\{\begin{array}{cl}
1 & \mbox{if $n=2$}\,; \\[.07in]
{\zeta(2)^{-1}}{\zeta(3)} & \mbox{if $n=3$}\, ;\\[.07in]
\zeta(2)^{-1}\prod_p(1+p^{-2}-p^{-3}-p^{-4})^{-1} & \mbox{if $n=4$}\, ;\\[.07in]
\zeta(2)^{-1}\prod_p(1+p^{-2}-p^{-4}-p^{-5})^{-1} & \mbox{if $n=5$}\,.
\end{array}\right.
$$ Furthermore, the proportion of $S_n$-number fields of degree~$n$
$(n\in\{2,\ldots,5\})$ having squarefree~discriminant is~exactly~$2/3$
of the proportion having fundamental discriminant. 
\end{corollary}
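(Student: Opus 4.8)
The plan is to derive Corollary~\ref{sqfreedisccor} by dividing the asymptotics of Theorem~\ref{sqfdisc} by the known asymptotic formulas for the total number $N_n(X)$ of isomorphism classes of $S_n$-number fields of degree $n$ with $|\disc|<X$. The first step is thus to record those total counts: for $n=2$ one has $N_2(X)=N_2^{\fund}(X)$ (regardless of the weighting convention), since by definition the discriminant of a quadratic field equals the discriminant of its ring of integers and so is fundamental; for $n=3$, Davenport--Heilbronn \cite{DH} gives $N_3(X)=\tfrac13\zeta(3)^{-1}X+o(X)$, obtained by adding the totally real and complex cubic counts (the non-$S_3$ cubics contributing only $O(X^{1/2})$); for $n=4$, \cite{dodqf} gives $N_4(X)=\tfrac{5}{24}\prod_p(1+p^{-2}-p^{-3}-p^{-4})\cdot X+o(X)$; and for $n=5$, \cite{dodpf} gives $N_5(X)=\tfrac{13}{120}\prod_p(1+p^{-2}-p^{-4}-p^{-5})\cdot X+o(X)$.

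Granting these, the second assertion of the Corollary is immediate: the proportions of degree-$n$ $S_n$-fields having squarefree, resp.\ fundamental, discriminant are $\lim_{X\to\infty}N_n^{\sqf}(X)/N_n(X)$ and $\lim_{X\to\infty}N_n^{\fund}(X)/N_n(X)$, so by parts (a) and (b) of Theorem~\ref{sqfdisc} their quotient is
$$\lim_{X\to\infty}\frac{N_n^{\sqf}(X)}{N_n^{\fund}(X)}=\frac{r_2(S_n)/(3\,n!)}{r_2(S_n)/(2\,n!)}=\frac23$$
for every $n\in\{2,\dots,5\}$ (for $n=2$ the weight $\tfrac12$ on each field cancels in the quotient). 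For the first assertion I would proceed case by case. For $n=2$ the proportion is $1$ by the first step. For $n=3,4,5$, substitute $r_2(S_3)=4$, $r_2(S_4)=10$, $r_2(S_5)=26$ into Theorem~\ref{sqfdisc}(b) to get $N_3^{\fund}(X)\sim\tfrac13\zeta(2)^{-1}X$, $N_4^{\fund}(X)\sim\tfrac{5}{24}\zeta(2)^{-1}X$, $N_5^{\fund}(X)\sim\tfrac{13}{120}\zeta(2)^{-1}X$, and divide by the total counts above. The key point is the numerical coincidence that the rational prefactors $\tfrac13$, $\tfrac{5}{24}$, $\tfrac{13}{120}$ of the total counts are exactly $r_2(S_n)/(2\,n!)$ (namely $4/12$, $10/48$, $26/240$), so they cancel against the prefactor of $N_n^{\fund}(X)$, leaving $\zeta(2)^{-1}$ divided by the $p$-local part of the total count: this yields $\zeta(2)^{-1}\zeta(3)$ for $n=3$ (as $\zeta(3)^{-1}=\prod_p(1-p^{-3})$), $\zeta(2)^{-1}\prod_p(1+p^{-2}-p^{-3}-p^{-4})^{-1}$ for $n=4$, and $\zeta(2)^{-1}\prod_p(1+p^{-2}-p^{-4}-p^{-5})^{-1}$ for $n=5$, as claimed.

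The one point requiring a little care, rather than a genuine obstacle, is that Theorem~\ref{sqfdisc} counts \emph{all} degree-$n$ fields of fundamental (or squarefree) discriminant, whereas the Corollary asks only about $S_n$-fields. For $n=3,5$, and for $n=4$ with Galois group among $C_4$, $V_4$, $A_4$, the number of non-$S_n$ fields of degree $n$ is $o(X)$ (indeed $O(X^{1/2+\epsilon})$, classically), so it is absorbed into the error term. The remaining case, $D_4$-quartics, does contribute $\gg X$ fields; but a $D_4$-quartic field $K$ contains a quadratic subfield $F$ with $\disc(F)^2\mid\disc(K)$ by the tower (conductor--discriminant) formula $\mathfrak{d}_{K/\Q}=N_{F/\Q}(\mathfrak{d}_{K/F})\cdot\mathfrak{d}_{F/\Q}^{2}$, and since $|\disc(F)|\ge 3$ this square factor is either an odd square exceeding $1$ or divisible by $16$; as a fundamental discriminant has squarefree odd part and $2$-adic valuation at most $3$, $\disc(K)$ can be neither squarefree nor fundamental. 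Therefore $N_n^{\fund}(X)$ and $N_n^{\sqf}(X)$ agree up to $o(X)$ with the analogous counts restricted to $S_n$-fields, and the divisions above are legitimate. Once the cited asymptotics and the three elementary identities $4/12=1/3$, $10/48=5/24$, $26/240=13/120$ are in hand, the Corollary follows.
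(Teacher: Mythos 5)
Your proposal is correct and follows essentially the same route as the paper, which deduces the corollary by combining Theorem~\ref{sqfdisc} with the total field-count asymptotics of \cite{DH}, \cite{dodqf}, and \cite{dodpf}; your verification of the cancellation $r_2(S_n)/(2\,n!)=\tfrac13,\tfrac{5}{24},\tfrac{13}{120}$ and the resulting Euler products matches the intended computation. The extra observation that $D_4$-quartic fields never have squarefree or fundamental discriminant (via $\disc(F)^2\mid\disc(K)$), so that Theorem~\ref{sqfdisc}'s count over all degree-$n$ fields agrees up to $o(X)$ with the count over $S_n$-fields, is a correct filling-in of a point the paper leaves implicit.
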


\vspace{.1in}
Both Theorem~\ref{sqfdisc} and Corollary~\ref{sqfreedisccor} follow from a general theorem that our methods allow us to prove, concerning the asymptotic count of $S_n$-number fields of degree $n\leq 5$ satisfying any desired finite or suitable infinite set of local conditions:  

\begin{theorem}\label{gensqfree}
Let $n=2$, $3$, $4$, or $5$.  Let $\Sigma=(\Sigma_\infty,\Sigma_2,\Sigma_3,\ldots)$ 
denote an acceptable set of local specifications for degree $n$ extensions of $\Q$, i.e., $\Sigma_\nu$ is any subset
of $($isomorphism classes of$)$ \'etale degree $n$ extensions of $\Q_\nu$ for each place $\nu$ of $\Q$, such that for sufficiently large primes $p$, the set $\Sigma_p$ contains all \'etale extensions $K_p$ of $\Q_p$ of degree $n$ such that $p^2\nmid \Disc(K_p/\Q_p)$.  Let $N_{n,\Sigma}(X)$ denote 
the number of $S_n$-number fields $K$ of degree $n$ having
absolute discriminant at most $X$ such that
$K\otimes\Q_\nu\in\Sigma_\nu$ for all places $\nu$ of $\Q$.
Then
\begin{equation}\label{sigmalimit}
\lim_{X\rightarrow\infty} \frac{N_{n,\Sigma}(X)}{X} = 
\displaystyle{\Bigl(
\sum_{K\in\Sigma_\infty}\frac12\cdot
\frac1{\#\Aut(K)}\Bigr)\prod_p
\Bigl(\sum_{K\in\Sigma_p}}
\frac{p-1}p\cdot\frac1{\Disc_p(K)}\cdot\frac1{\#\Aut(K)} \Bigr).
\end{equation}
\end{theorem}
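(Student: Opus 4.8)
The plan is to combine the parametrization results of \cite{DF}, \cite{hcl3}, \cite{hcl4} (which identify degree-$n$ fields for $n\le 5$ with suitable orbits of a ``suitably large'' group $G(\Z)$ acting on a lattice $V(\Z)$, the discriminant of the field matching the invariant polynomial $f_n$ up to a controlled factor) with the asymptotic orbit-counting results of \cite{DH}, \cite{dodqf}, \cite{dodpf}, and then to upgrade those counts to count orbits satisfying the local conditions $\Sigma$. First I would recall that the maximal orders $\mathcal O_K$ correspond, via the parametrization, to $G(\Z)$-orbits on a subset $U\subseteq V(\Z)$ cut out by congruence conditions modulo $p^2$ for each $p$ (``maximality at $p$''), and that for $x$ in such an orbit the invariant $f_n(x)$ equals $\Disc(K)$. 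A local specification $\Sigma_\nu$ then translates into a $G(\Z_\nu)$-invariant subset $U_\nu\subseteq V(\Z_\nu)$, and $N_{n,\Sigma}(X)$ becomes the number of $G(\Z)$-orbits on $\bigcap_\nu U_\nu \cap \{|f_n|\le X\}$, each weighted by $1/\#\mathrm{Stab}$, which on the field side is $1/\#\Aut(K)$.

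The second step is the geometry-of-numbers count. For a set $W$ defined by finitely many congruence conditions mod $M$, the arguments of \cite{DH}, \cite{dodqf}, \cite{dodpf} (averaging over a fundamental domain for $G(\R)$ on the appropriate slice, cutting off the cusp, and applying a sieve) give $N(W;X) = C\cdot \mathrm{Vol}(\mathcal R_X)\cdot \prod_p (\text{local density of } W \text{ at } p) + o(X)$, where the Euler product over $p\mid M$ is finite and the constant $C$ and archimedean volume produce the factor $\sum_{K\in\Sigma_\infty}\frac12\cdot\frac{1}{\#\Aut(K)}$ after the usual change of variables $\mathrm{disc}\mapsto$ invariant (the $\frac12$ coming from the generic stabilizer size, matching the $\frac{1}{\#\Aut}$ on the field side as in Theorem~\ref{sqfdisc} for $n=2$); the $p$-adic factor is $\sum_{K\in\Sigma_p}\frac{p-1}{p}\cdot\frac1{\Disc_p(K)}\cdot\frac1{\#\Aut(K)}$, the $\frac{p-1}{p}=\zeta_p(2)^{-1}$ being exactly the local density of the maximality-and-squarefree condition at $p$ in $\Sigma_p$ for large $p$. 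The factor $1/\Disc_p(K)$ records the index of the suborbit in $V(\Z_p)$ corresponding to $K_p$.

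The genuinely hard step — and the one this paper is built to handle — is the passage to the \emph{infinite} set of conditions: $\Sigma$ imposes (possibly nontrivial) conditions at all primes, so one must show the tail $\sum_{p>Y} (\text{contribution of }x \text{ with } K\otimes\Q_p\notin\Sigma_p \text{ but } f_n(x) \text{ otherwise squarefree-ish})$ is $o(X)$ uniformly. For $p\le X^{1/2}$ this is a standard power-saving sieve, but for $p$ in the range $X^{1/2}<p<X$, and in particular to control $f_n(x)$ being divisible by $p^2$ for large $p$ where the defining locus is of high codimension, one invokes the geometric sieve of Section~2 of this paper: the set of $x\in V(\Z)$ with $p^2\mid f_n(x)$ is, away from a codimension-$\ge 2$ ``bad'' subvariety, a union of $G(\Z/p^2)$-orbits each of which is a single $G$-orbit of small index, and the ``suitably large'' group action forces the genericity needed for the quantitative estimate $\#\{x : \|x\|\le X^{1/d}, p^2\mid f_n(x)\} = o(X)$ summed over $p>X^{1/2}$. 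I expect this uniform tail estimate — reconciling the group-equivariant structure of the $p^2\mid f_n$ locus with an effective count in the large-$p$ range — to be the main obstacle; once it is in place, multiplicativity of the local densities and dominated convergence assemble the Euler product in \eqref{sigmalimit}, and specializing $\Sigma$ to ``squarefree'' or ``fundamental'' at all $p$ yields Theorem~\ref{sqfdisc} and Corollary~\ref{sqfreedisccor}.
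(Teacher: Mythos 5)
Your overall skeleton---translate $\Sigma$ into $G(\Z_p)$-invariant subsets of $V(\Z_p)$, count orbits by geometry of numbers for finitely many congruence conditions, evaluate the local masses, and control a tail over large primes---is indeed the paper's route: the paper takes $S_p\subset V(\Z)$ to be the points whose associated ring is $p$-adically the ring of integers of an algebra in $\Sigma_p$, applies the counting result (\ref{last}) of \S3.4 to $S=\cap_p S_p$, and computes $\Vol(\FF_1)\prod_p\mu_p(S)$ as in \cite[Pf.\ of Lemma~20]{hcl4}. However, the step you yourself flag as ``the main obstacle'' is genuinely missing, and the mechanism you gesture at is not the one that works. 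The locus $W_p$ of $v$ with $p^2\mid f_n(v)$ splits as $W_p^{(1)}\cup W_p^{(2)}$: strong multiples, where $f_n$ vanishes mod $p^2$ on the entire residue class mod $p$, and weak multiples. The Ekedahl-type geometric sieve (Theorem~\ref{gsthm}, via Lemma~\ref{stronglemma}) applies only to $W_p^{(1)}$, because only strong vanishing forces the reduction mod $p$ to land on a codimension-two subscheme. The weak locus $W_p^{(2)}$ reduces mod $p$ merely to the codimension-one discriminant hypersurface, with an additional genuinely mod-$p^2$ condition; no appeal to ``genericity forced by the large group action'' makes the closed-point sieve apply there, and the trivial congruence-density bound $O(X^{m/d}/p^2)$ is uniform only for $p\lesssim X^{1/(2d)}$, far short of your range $p>X^{1/2}$.

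What the paper actually does for $W_p^{(2)}$---and what is absent from your proposal---is Condition~6 of Section~2: for generic $v\in W_p^{(2)}$ one constructs an explicit $g\in G(\Q)$ (rational, not integral) with $gv\in V(\Z)$ and $|f_n(gv)|=|f_n(v)|/p^2$. For instance, for binary cubics whose reduction has a double root at $[1:0]$, weak divisibility forces $p\mid c$ and $p^2\mid d$, and multiplying by $p$ and applying $\diag(1,1/p)$ divides the discriminant by $p^2$; analogous explicit transformations are given for $f_4$ and $f_5$ in \S4.2. Combined with the bounded-fiber statement (each target arises from at most ${n\choose 2}$ classes, Condition~6(iii)) and the uniform orbit count $N(V(\Z);X/p^2)=O((X/p^2)^{m/d})$ from Condition~5, this yields the summable bound over $p>X^{1/(2d)}$ in Lemma~\ref{westimate}, which is exactly what makes the lower bound (\ref{infinapprox}), and hence the passage to infinitely many local conditions, go through. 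Without this construction the tail is uncontrolled and only the upper bound in (\ref{sigmalimit}) survives. Two smaller slips: $(p-1)/p\neq\zeta_p(2)^{-1}=1-p^{-2}$---that factor is a normalization coming from the density computation of \cite{hcl4}, not the local density of a squarefree condition---and since maximal rings occur exactly once in the bijection of Theorem~\ref{bij}, the $1/\#\Aut(K)$ weights enter through the local masses rather than as orbit-stabilizer weights.
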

The above theorem thus allows one to count number fields of degree at
most five satisfying very general sets of local conditions.  In
particular, it proves a more general version (namely, where we allow
infinitely many local conditions) of the heuristics given
in \cite[(4.2)]{imrn}.  

Since having squarefree or fundamental discriminant is a local
condition of the type occurring in Theorem~\ref{gensqfree},
Theorem~\ref{sqfdisc} will follow from Theorem~\ref{gensqfree} once
the sums in the Euler factors in~(\ref{sigmalimit}), i.e., the {\it
  local masses}, are computed (see~\S4 for details).

\subsection{Unramified nonabelian ($A_n$- and $S_n\times C_2$-) extensions of quadratic fields}

The density of degree $n$ number fields having squarefree discriminant is directly related to the
distribution of certain unramified {\it nonabelian} extensions of
quadratic fields.  
More precisely, given a finite group $G$ and a
quadratic field $K$, we may consider the set $U(K;G)$ of all
isomorphism classes of unramified {\it $G$-extensions of $K$}, i.e., Galois extensions of $K$ with
Galois group $G$.  An extension
$L\in U(K;G)$ is not necessarily normal over $\Q$, and its normal
closure over $\Q$ has Galois group $G'\subset G\wr C_2=(G\times
G)\rtimes C_2$.  It is thus natural to partition $U(K;G)$ into the
sets $U(K; G,G')$, where $U(K; G,G')$ denotes the set of all
isomorphism classes of unramified $G$-extensions $L$ of $K$ such that
the Galois closure of $L$ over $\Q$ has Galois group $G'$.  If $L\in
U(K; G,G')$, then we say that $L$ is an unramified extension of $K$ of
{\it type $(G,G')$}, or simply an unramified {\it $(G,G')$-extension}.

\begin{theorem}\label{unrexts}
  Let $n=3,$ $4$, or $5$, and let $E^+(G,G')$ $($resp.\ $E^-(G,G'))$
  denote the average number of unramified $(G,G')$-extensions that
  real $($resp.\ imaginary$)$ quadratic fields possess, where
  quadratic fields are ordered by their absolute discriminants.  Then
\begin{equation*}
\begin{array}{clcc}
{{\rm (a)}} & E^+(A_n,S_n) &\!=\!& \displaystyle{\frac{1}{n!}\,;}\\[.185in]
{{\rm (b)}} & E^-(A_n,S_n) &\!=\!& \displaystyle{\frac{1}{2(n-2)!}\,;}\\[.25in]
{{\rm (c)}} & E^+(S_n,S_n\times C_2) &\!\,=\,\,\!& \infty\,;\\[.245in]
{{\rm (d)}} & E^-(S_n,S_n\times C_2) &\!\,=\,\,\!& \infty\,.\\[0in]
\end{array}
\end{equation*}
\end{theorem}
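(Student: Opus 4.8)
The plan is to deduce Theorem~\ref{unrexts} from Theorem~\ref{gensqfree} by translating the problem about unramified $(G,G')$-extensions of quadratic fields into a problem about counting degree $n$ number fields with prescribed local conditions. First I would recall the standard class-field-theoretic/Galois-theoretic dictionary: if $F$ is a quadratic field and $L/F$ is an unramified $A_n$-extension whose Galois closure $M/\Q$ has group $S_n = A_n \rtimes C_2$, then $M$ is the Galois closure of a degree $n$ field $K$ with $\mathrm{Gal}(\widetilde K/\Q) \cong S_n$, and the condition that $L/F$ be unramified at all finite and infinite places translates exactly into: $F$ is the quadratic resolvent field of $K$ (i.e. $F = \Q(\sqrt{\Disc(K)})$), and $\Disc(K) = \Disc(F)$, which is to say $\Disc(K)$ is a fundamental discriminant. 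Conversely, each such $K$ (up to the $S_n$-conjugacy that does not change the field) gives rise to such an $L$, and one must carefully count the number of degree $n$ fields $K$ (or rather $\widetilde{K}$-subfields) mapping to a given $L$, which is where the factors $1/n!$, $1/(2(n-2)!)$ etc. will come from—essentially counting the degree-$n$ subfields of the $S_n$-field $M$ lying over $F$, weighted appropriately, and separating the real from the imaginary case by whether $\Disc(K)>0$ or $<0$.

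Next I would set up the averaging. The average $E^{\pm}(A_n,S_n)$ is $\lim_{X\to\infty} \bigl(\sum_{L} 1\bigr) / \bigl(\#\{F : |\Disc(F)| < X,\ \pm\Disc(F) > 0\}\bigr)$, where the numerator ranges over unramified $(A_n,S_n)$-extensions $L$ of quadratic fields $F$ with $|\Disc(F)| < X$ of the appropriate sign. Using the dictionary above, the numerator becomes a weighted count of $S_n$-number fields $K$ of degree $n$ with $|\Disc(K)| < X$, fundamental discriminant, and sign of discriminant equal to $\pm$; this is precisely $N_{n,\Sigma}(X)$ for the acceptable collection $\Sigma$ picking out "fundamental discriminant" at the finite places (the same local condition as in Theorem~\ref{sqfdisc}(b)) and the appropriate sign condition at infinity. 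The denominator is $\frac{6}{\pi^2}X$ (up to the usual factor, since the number of fundamental discriminants of a given sign in $[-X,X]$ is $\sim \tfrac{3}{\pi^2}X$, using that every quadratic field has a fundamental, hence squarefree-at-the-level-of-radicals, discriminant). So I would invoke Theorem~\ref{gensqfree} with these $\Sigma_\nu$, compute the product of local masses exactly as is done for Theorem~\ref{sqfdisc}(b), and take the ratio; the $\zeta(2)^{-1}$ factors cancel, leaving the clean constants $1/n!$ and $1/(2(n-2)!)$, the discrepancy between the two being accounted for by the local behavior at the infinite place (the ratio of the number of étale $\R$-algebras of degree $n$ giving rise to totally real versus mixed signature $K$, weighted by $1/\#\Aut$).

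For parts (c) and (d), the extensions are $S_n$-extensions $L/F$ whose Galois closure over $\Q$ is $S_n \times C_2$; here the dictionary identifies these with degree $n$ $S_n$-fields $K$ together with the data that $F$ is an arbitrary quadratic field \emph{disjoint} from the $S_n$-closure $\widetilde K$, with the ramification-at-finite-places condition now forcing $\Disc(K)$ to be a square (so that $\widetilde K/\Q$ is "as unramified as possible" relative to $F$ and $L/F$ is unramified everywhere). Equivalently, one counts pairs $(K,F)$ with $K$ an $S_n$-field of bounded discriminant with $\Disc(K)$ a perfect square and $F$ unramified outside $\Disc(K)$; summing over $F$ produces a divergent sum (there are $\gg \log|\Disc(K)|$ choices of such $F$ on average, or more precisely the relevant mass diverges), and already the count of $S_n$-fields $K$ with square discriminant, ordered suitably, is positive-density-like enough that the resulting average is infinite. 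I would make this precise by exhibiting, for any constant $C$, enough $(A_n,\cdot)$- or $(S_n,\cdot)$-configurations to force $E^\pm > C$; the cleanest route is to lower-bound the count using Theorem~\ref{gensqfree} applied to the acceptable family "square discriminant, split completely at a fixed large finite set of primes" and let the set of primes grow.

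The main obstacle I expect is the bookkeeping in the Galois-theoretic translation—getting the weighting factors exactly right. One must be careful about: (i) the distinction between counting extensions $L/F$ up to isomorphism versus counting the degree $n$ subfields $K$ of $M$, which differ by the number of such subfields in a single $S_n$-closure and by automorphism factors; (ii) the precise equivalence between "$L/F$ unramified everywhere" and "$\Disc(K)$ fundamental" (resp. "square"), which requires checking that no ramification is hidden in the index $[\O_{\widetilde K} : \O_F\O_K]$ and handling wild ramification at $2$ and small primes correctly; and (iii) matching the infinite-place condition so that the real/imaginary split in Theorem~\ref{unrexts}(a),(b) comes out as $1/n!$ versus $1/(2(n-2)!)$—this hinges on comparing $\#\Aut$ and signature data for $\R$-étale algebras, and is exactly the place where the factor $\frac{1}{2}$ in front of $\frac{1}{\#\Aut(K)}$ in~(\ref{sigmalimit}) interacts with the count of quadratic fields of each sign. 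Once these are pinned down, (a) and (b) are a direct substitution into Theorem~\ref{gensqfree}, and (c), (d) follow from the divergence of the corresponding local mass product.
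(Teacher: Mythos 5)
Your treatment of parts (a) and (b) is essentially the paper's: identify unramified $(A_n,S_n)$-extensions of quadratic fields $F$ with $S_n$-fields $K$ of degree $n$ having fundamental discriminant and $F=\Q(\sqrt{\Disc(K)})$, apply Theorem~\ref{gensqfree} with the corresponding local conditions, and divide by the $\frac12\zeta(2)^{-1}X$ quadratic fields of the given sign. One point you state incorrectly, though you later flag it: the unramifiedness at the \emph{infinite} places is not implied by ``$\Disc(K)$ fundamental,'' and the real/imaginary split is \emph{not} simply the sign of $\Disc(K)$. It requires complex conjugation in $\Gal(\widetilde K/\Q)\subset S_n$ to be trivial or a single transposition; for $n=4,5$ the signatures $\C^2$ and $\R\times\C^2$ have positive discriminant but give extensions ramified at the archimedean places of the (real) field $F$, and including them would inflate $E^+(A_n,S_n)$ (e.g.\ to $1/24+1/8$ for $n=4$ instead of $1/24$). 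The paper pins this down (via the Kondo-type statement) by taking $\Sigma_\infty=\{\R^n\}$ in the real case and $\Sigma_\infty=\{\R^{n-2}\times\C\}$ in the imaginary case; your closing remark about ``totally real versus mixed signature'' suggests you have the right masses in mind, but the dictionary as you wrote it would give the wrong constant in (a).

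The genuine gap is in parts (c) and (d). Your dictionary there is wrong: you require $\Disc(K)$ to be a perfect square, but an $S_n$-field of degree $n$ cannot have square discriminant (its quadratic resolvent $\Q(\sqrt{\Disc(K)})$ would be trivial, forcing $\Gal(\widetilde K/\Q)\subseteq A_n$), so the family you propose to count is empty and cannot produce any lower bound, let alone the $X\log X$ divergence. Likewise ``$F$ unramified outside $\Disc(K)$'' is essentially the opposite of the correct condition. The right statement, and the one the paper uses, is: $M=LF$ (with $L=\widetilde K$) is unramified over $F$ at all finite places if and only if $K$ has \emph{fundamental} discriminant and $\Disc(K)\mid\Disc(F)$ --- the point being that a simply ramified prime of $L$ has inertia generated by $(\tau,-1)\in S_n\times C_2$ once $F$ is also ramified there, and this meets $\Gal(M/F)=S_n\times\{1\}$ trivially; ramification of $K$ is absorbed by ramification of $F$, not avoided. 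The divergence then comes from summing over the ratio $d=\Disc(F)/\Disc(K)$: for each admissible $d\leq N$ one gets $\gg X/d$ pairs $(K,F)$ with $|\Disc(F)|\leq X$ by the count of fields with fundamental discriminant (Theorem~\ref{sqfdisc}(b)-type input), and $\sum_{d\leq N} X/d\gg X\log N$, which after dividing by the $\asymp X$ quadratic fields gives $E^\pm(S_n,S_n\times C_2)=\infty$. Your proposed route (square discriminant, splitting conditions at a growing set of primes) does not repair to this and would need to be replaced by the divisibility argument just described.
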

In other words, the average number of unramified $A_n$-extensions
($n=3$, 4, or 5) possessed by real or imaginary quadratic fields is
positive, and the average number of unramified $S_n\times
C_2$-extensions is also positive, and in fact infinite!  For $n=2$, note that Theorem 1 is still true, except that the constants
in (a) and (b) must each be multiplied by 2, again reflecting the fact that
a quadratic extension has two automorphisms.

The case $n=3$ in Theorems~\ref{unrexts}(a)--(b) corresponds to
abelian ($A_3$-) extensions, and is due to
Davenport--Heilbronn~\cite{DH}, who obtained these results via the
use, in particular, of methods that amount essentially to class field
theory (see \cite{DW} for this nice interpretation).  The cases $n=4$
and $n=5$ of Theorems~\ref{unrexts}(a)--(b) are both new, and to our knowledge are independent of and cannot be treated by class field theory.  Indeed, they yield information on the
distribution of certain nonabelian unramified extensions of quadratic
fields, namely, those corresponding to the groups $A_4$ and $A_5$;
in particular, the case $n=5$ yields information about the
distribution of unramified extensions of a quadratic field of a
nonsolvable type, namely $A_5$. Theorems~\ref{unrexts}(c)--(d) are also new.

 Returning to the statement of Theorem~\ref{unrexts}, it is an interesting
 question as to which groups $G,G'$ lead to quantities 
 $E^+(G,G')$ and $E^-(G,G')$ that exist and are finite, and what their
 values are when they are finite.  Both possibilities of finite and
 infinite already occur in Theorem~\ref{unrexts}.  In the case of {\it
   abelian} $G$, we must have that $G'=G\rtimes C_2$ (where the
 nontrivial element of $C_2$ acts on $G$ by inversion).  The
 Cohen--Lenstra heuristics~\cite{CL} can then be shown to imply that, for $G$ abelian, 
\begin{eqnarray}
\label{CLeq1}
E^+(G,G\rtimes C_2) &\!=\!& \displaystyle{\frac{1}{\,|\Aut(G)|\cdot|G|\,}\,,}\\[.05in]
\label{CLeq2}
E^-(G,G\rtimes C_2) &\!=\!&  \;\;\;\;\;\displaystyle{\frac{1}{|\Aut(G)|}\,}
\end{eqnarray}
whenever $|G|$ is odd.

Note that the cases in Theorems~\ref{unrexts}(a)--(b) in which $G$ is abelian occur
when $n=3$, and in these cases the values agree with those predicted
by (\ref{CLeq1}) and (\ref{CLeq2}). 
It would be interesting to have more general
heuristics for $E^\pm(G,G')$ that include both the abelian results and
conjectures above
as well as the nonabelian results of Theorem~\ref{unrexts}.

In parts (c) and (d) of Theorem~\ref{unrexts}, it is actually possible to say
something more precise; namely, the methods of Section~4 show that
\begin{eqnarray}
\label{mp1}
\displaystyle{\sum_{0<\Disc(K)<X}|U(K;S_n,S_n\times C_2)|} &\sim&
c_n^+ X\log\,X\,;\\[.1in]
\label{mp2}
\!\!\displaystyle{\sum_{-X<\Disc(K)<0}\!\!\!\;|U(K;S_n,S_n\times C_2)|}  &\sim& \;\!c_n^- X\log\,X\,,
\end{eqnarray}
for $n=3$, $4$, and $5$, where $c_n^\pm$ are certain positive
constants which depend on $n$.  

\subsection{Squarefree values taken by polynomials such as $f_3$, $f_4$, and $f_5$}

As we have mentioned, 
to prove Theorem~\ref{sqfdisc}, Corollary~\ref{sqfreedisccor}, Theorem~\ref{gensqfree}, and Theorem~\ref{unrexts}, one must
determine the densities of lattice points in $\R^m$ where the values of certain
polynomials---namely, the {discriminant} polynomials $f_3$, $f_4$, or $f_5$---are squarefree.
In general, counting the number of lattice points of bounded height
where a polynomial takes squarefree values is an unsolved problem,
although conjecturally it is easy to guess what should happen.
Namely, if $f(x_1,\ldots,x_m)$ is any squarefree polynomial over $\Z$
then, barring congruence 
obstructions, one expects that $f$ takes infinitely many squarefree values on 
$\Z^m$.  More precisely, one expects
\begin{equation}\label{localdensity}
\lim_{N\to\infty} \frac{\#\{x\in \Z^m\cap[-N,N]^m:f(x) \mbox{
    squarefree}\}}{(2N+1)^m} = \prod_p(1-c_p/p^{2m}),
\end{equation}
where, for each prime $p$, the quantity $c_p$ is the
number of elements $x\in(\Z/p^2\Z)^m$ satisfying $f(x)=0$ in $\Z/p^2\Z$.

When $m=1$, this assertion is relatively easy to prove in degrees $\leq 2$, while 
for cubic polynomials it was proven by Hooley~\cite{Hooley}.  For degrees $\geq 
4$, it appears that no single example is known of a univariate irreducible 
polynomial $f$ 
satisfying (\ref{localdensity})!  As for polynomials in more than one variable, 
Greaves has shown that (\ref{localdensity}) holds for all binary forms of
degree at most 6.  

Conditionally, Granville~\cite{Granville} showed that 
(\ref{localdensity}) follows, for all univariate polynomials of any degree, 
from the ABC Conjecture.  More recently, Poonen~\cite{Poonen2} proved that the ABC 
Conjecture implies that (a slightly weaker version of) equation (\ref{localdensity}) 
is true also for all multivariate polynomials. 

In this article, we give three special examples of polynomials $f$ for which
we can prove unconditionally that (\ref{localdensity}) holds; namely,
these are the three polynomials that we use to prove Theorem~\ref{sqfdisc}, Corollary~\ref{sqfreedisccor}, and Theorems~\ref{gensqfree}--\ref{unrexts}. 
More precisely, let $f$ $(=f_3$, $f_4$, or $f_5$) denote the primitive integral polynomial that
generates the ring of invariants for:
\begin{itemize}
\item[(i)]
the action of $\SL_2(\C)$ on $\Sym_3(\C^2)$, the space of binary cubic forms over $\C$;
\item[(ii)]
the action of $\SL_2\times\SL_3(\C)$ on $\C^2\otimes \Sym_2(\C^3)$, the space of pairs of ternary quadratic forms over $\C$; or
\item[(iii)]
the action of $\SL_4\times\SL_5(\C)$ on $\C^4\otimes \wedge^2(\C^5)$, the space of quadruples of $5\times 5$ skew-symmetric matrices over $\C$,
\end{itemize}
respectively.  
Then for (i), (ii), or (iii), $f$ is a polynomial of degree~$m$ in $m$ variables, where $m=4$, 12, or 40,  respectively (see \cite{SK}, or see \cite{hcl3}--\cite{hcl4} for explicit constructions of these invariant polynomials).  We prove:

\begin{theorem}\label{fthm}
The polynomials $f$ in {\rm (i)--(iii)} above are each irreducible over $\bar\Q$ and are of degree~$m$ in $m$ variables, where $m=4$, $12$, and $40$ respectively.  Moreover, for each of these polynomials $f$, we have 
\begin{equation*}\label{localdensity2}
\lim_{N\to\infty} \frac{\#\{x\in \Z^m\cap[-N,N]^m:f(x) \mbox{ \rm
    squarefree}\}}{(2N+1)^m} = \prod_p(1-c_p/p^{2m}) 
    = \frac23\zeta(2)^{-1},
\end{equation*}
where $c_p
$ is the number of elements $x\in(\Z/p^2\Z)^m$ satisfying $f(x)=0$ in $\Z/p^2\Z$.
 \end{theorem}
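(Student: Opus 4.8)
The statement splits into a geometric part --- $f$ is absolutely irreducible of degree $m$ --- and an analytic part --- the squarefree density. For the geometric part I would appeal to the theory of prehomogeneous vector spaces: each pair $(G,V)$ in (i)--(iii) is a (co)regular prehomogeneous representation with finite generic stabilizer, so its ring of invariants is the polynomial ring on a single relative invariant $f$, which is homogeneous of degree $\dim V=m$, and the singular set $\{f=0\}$ is irreducible (so $f$ is irreducible over $\bar\Q$) --- this is part of the Sato--Kimura analysis and is in any case visible from the explicit formulas for $f_3,f_4,f_5$ in \cite{SK}, \cite{hcl3}, \cite{hcl4}. In particular $\{f=0\}\subset\A^m$ is an irreducible hypersurface of dimension $m-1$, so $\#\{x\in[-N,N]^m\cap\Z^m:f(x)=0\}=O(N^{m-1})=o(N^m)$, and I may restrict attention to $x$ with $f(x)\neq0$.

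For the density I would run the standard squarefree sieve. Sieving out the primes $p\le M$ gives
\[
\#\{x\in[-N,N]^m\cap\Z^m:f(x)\text{ squarefree}\}=\prod_{p\le M}\Bigl(1-\frac{c_p}{p^{2m}}\Bigr)(2N)^m+O_M(N^{m-1})-R_M(N),
\]
where $R_M(N)=\#\{x\in[-N,N]^m\cap\Z^m:p^2\mid f(x)\text{ for some prime }p>M\text{ but for no }p\le M\}$; letting $M\to\infty$ reduces the theorem to (a) computing $\prod_p(1-c_p/p^{2m})$ and (b) showing $\limsup_N N^{-m}R_M(N)\to0$ as $M\to\infty$. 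For (a) I would use the parametrizations of \cite{DF}, \cite{hcl3}, \cite{hcl4}: for all but finitely many $p$, $c_p/p^{2m}$ is the density of $x\in(\Z/p^2\Z)^m$ whose associated rank-$n$ ring is not maximal-with-squarefree-discriminant at $p$; the complementary density is a $p$-adic mass over the degree-$n$ \'etale $\Q_p$-algebras with $v_p(\disc)\le 1$, which the local mass formula evaluates to a quantity independent of $n\in\{3,4,5\}$ and whose Euler product (after also handling the small and bad primes directly) equals $\tfrac23\zeta(2)^{-1}$.

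The whole difficulty is (b). Since $f(x)=0$ contributes $O(N^{m-1})$, and since for $M<p\le\sqrt N$ the elementary bound $\#\{x\in[-N,N]^m:p^2\mid f(x)\}\ll N^m/p^2$ (each residue class mod $p^2$ meets the box in $O((N/p^2)^m)$ points, and $c_p=O(p^{2m-2})$) gives $\sum_{M<p\le\sqrt N}\ll N^m\sum_{p>M}p^{-2}=o_M(N^m)$, what remains is to bound $\#\{x\in[-N,N]^m\cap\Z^m:f(x)\neq0,\ p^2\mid f(x)\text{ for some }p>\sqrt N\}$. This is the crux, and the only place where the group action is essential: a union bound over $p$ diverges, $f(x)=0$ is unavailable, and by itself ``$p^2\mid f(x)$'' only forces $x\bmod p$ onto the \emph{codimension-one} hypersurface $\{f=0\}$. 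The plan is to use the arithmetic parametrization to read $p^2\mid f(x)$ (with $f(x)\ne0$) as non-maximality or multiple/wild ramification of the associated ring at $p$, and to record, modulo $p$, the ``location of the degeneracy'' (the repeated root of the reduced binary cubic, or the analogous flag data for quartic and quintic rings); this upgrades the condition to the statement that the pair consisting of $x\bmod p$ and this flag lies on a \emph{fixed} subscheme of codimension $\ge 2$ in $\A^m_\Z\times\mathcal F$, where $\mathcal F$ is a suitable proper homogeneous space for $G$. One then applies the geometric sieve of Ekedahl (extended to $\A^m\times\mathcal F$): the number of $x\in[-N,N]^m\cap\Z^m$ such that $(x,z)$ reduces modulo some prime $p>M$ into this codimension-$\ge 2$ subscheme for some $z\in\mathcal F(\F_p)$ is $o_M(N^m)$. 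Strong approximation for $G$ lets one pass between the geometric normal form and an actual congruence condition, and for the very largest primes one may supplement with the uniformity estimates of \cite{DH}, \cite{dodqf}, \cite{dodpf}. Combining this with (a) yields the density $\tfrac23\zeta(2)^{-1}$. The hard point throughout is the passage from ``$p^2\mid f(x)$'' to a codimension-$\ge 2$ geometric condition: this fails for general squarefree polynomials, and it is precisely the symmetry of $f$ under a large group that makes it available.
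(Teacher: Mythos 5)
There is a genuine gap at the crux, i.e., at the primes $p>\sqrt N$ where $p^2\mid f(x)$ ``weakly''. Your device --- recording the mod-$p$ location of the degeneracy $z\in\mathcal F(\F_p)$ so that $(x\bmod p,z)$ lies on a codimension-$\geq 2$ incidence subscheme $Z\subset \A^m\times\mathcal F$, and then invoking an Ekedahl-type sieve on the product --- does not work. The condition you actually impose on the lattice point $x$ is ``there exists $z\in\mathcal F(\F_p)$ with $(x\bmod p,z)\in Z(\F_p)$'', which is just ``$x\bmod p$ lies in the projection of $Z$'', and that projection is the discriminant hypersurface: codimension one. Equivalently, weak divisibility is by definition \emph{not} detectable from $x\bmod p$ (there are lifts of $x\bmod p$ on which $f$ is not divisible by $p^2$), so no subscheme condition on $(x\bmod p,z)$, with $z$ quantified existentially, can capture it; the auxiliary flag does not raise the codimension of the condition on the $x$-space, and there is no integral structure on $z$ (it exists only mod $p$) for the large-prime half of Ekedahl's argument to exploit. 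Indeed the asserted estimate is false as stated: the set of $x\in[-N,N]^m$ for which some prime $p>M$ divides $f(x)$ and the reduction has a node/double root at some $z\in\mathcal F(\F_p)$ has density tending to $1$, not $o_M(1)$. The geometric sieve genuinely handles only the ``strong'' multiples (vanishing mod $p^2$ for mod-$p$ reasons), which is how the paper uses it.

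The paper's treatment of the weak multiples is entirely different and is where the group action enters: for each of $f_3,f_4,f_5$ one puts the weak locus mod $p$ into a normal form, checks that weak divisibility forces one specific coordinate to be divisible by $p^2$, and then exhibits an explicit $g\in G(\Q)$ (e.g.\ $\mathrm{diag}(1,1/p)$ acting on binary cubics, and the analogues (\ref{f4t}), (\ref{f5t})) with $gv\in V(\Z)$ and $f(gv)=f(v)/p^2$, the map being at most $\binom n2$-to-one on $G(\Z)$-classes (Condition 6). Combined with the geometry-of-numbers counts of irreducible $G(\Z)$-orbits of bounded discriminant in fundamental domains (Condition 5), this bounds the number of weak multiples with $|f|<X$ by $O(X/p^2)$ per prime, which sums acceptably over $p>X^{1/(2d)}$; one then covers the box $[-N,N]^m$ by finitely many $G(\Z)$-translates of the fundamental domain to return to the box count (\S3.4). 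Your closing reference to the uniformity estimates of \cite{DH}, \cite{dodqf}, \cite{dodpf} points toward the right kind of ingredient, but those estimates concern non-maximality and do not by themselves cover all weak multiples (e.g.\ maximal rings with $p^2\mid\mathrm{Disc}$), and in your write-up they are a side remark rather than the mechanism; as it stands the main mechanism you propose fails, so the tail estimate --- the heart of the theorem --- is not established. The remaining parts of your outline (irreducibility via Sato--Kimura, the elementary bound for $M<p\le\sqrt N$, and the local mass computation giving $\frac23\zeta(2)^{-1}$) are consistent with the paper.
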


 For these three discriminant polynomials $f$, particularly in the
 cases (ii) and (iii) where the degrees are large ($\geq 4$) in each
 individual variable (and the number of variables is equal 
 to the degree), we do not believe that any of the previously known
 unconditional results and methods as described above would apply.
 Thus these $f$ give new examples of polynomials satisfying (\ref{localdensity}).  It is interesting to note that
 the density of squarefree values taken by each of these three
 discriminant polynomials $f$ is exactly $\frac23\zeta(2)^{-1}$, 
independent of $f$.
 
 \subsection{Squarefree values of discriminants of genus one models}
 
 The method, which we will describe more axiomatically in the next subsection and in Section 2, 
 may also be applied to various other polynomials that are invariant under the action of a suitably large
 algebraic group defined over $\Z$.  Another family of classical examples on which the method applies are the discriminant polynomials of models of genus one curves. 

There are many such models of genus one curves of interest.  Genus one curves with maps to $\P^1$, $\P^2$, $\P^3$, or $\P^4$, via complete linear systems of degrees 2, 3, 4, or 5, are called {\it genus one normal curves} of degree 2, 3, 4, or 5, respectively.
They can be realized as: a double cover of~$\P^1$ ramified at four
points; a cubic curve in $\P^2$; the intersection of a pair of
quadrics in $\P^3$; or the intersection of five quadrics in $\P^4$
arising as the $4\times 4$ sub-Pfaffians of a $5\times 5$
skew-symmetric matrix of linear forms on $\P^4$.  A genus one model of
degree one may be viewed simply as an elliptic curve in Weierstrass
form.  (See, e.g., \cite{Fisher1} for a beautiful exposition.)

We note that we may also consider genus one models in products of
projective spaces.  
For example, 
a genus one curve in $\P^1\times\P^1$ is cut out by a bidegree $(2,2)$-form
on $\P^1\times\P^1$; and a genus one curve in
$\P^2\times\P^2$ is similarly cut out by three bidegree $(3,3)$-forms on
$\P^2\times \P^2$. 
These cases will be carried out in more detail in \cite{BH2}.
(See \cite{BH} 
also for other 
examples of 
such
spaces of genus one models.)

For all these genus one models over $\Z$, we show that the discriminant polynomials of these genus one curves all take the expected (positive) densities of squarefree values.   (Recall that the discriminant of a genus one model is the polynomial whose nonvanishing is equivalent to the smoothness of the corresponding genus one curve.)  For genus one models of degree one, i.e., Weierstrass elliptic curves $y^2=x^3+Ax+B$, the result is easy, as the discriminant polynomial $-4A^3-27B^2$ is only of degree 2 as a polynomial in $B$.  For higher degree genus one models, the result is much more difficult to obtain.  

More precisely, let $g$ (which we will denote by $g_2$, $g_3$, $g_4$,
$g_5$, 
respectively) denote the primitive integral discriminant polynomial of any of the following representations: 
\begin{itemize}
\item[(i)]
the action of $\SL_2(\C)$ on $\Sym^4(\C^2)$, the space of binary quartic forms over $\C$;
\item[(ii)]
the action of $\SL_3(\C)$ on $\Sym^3(\C^3)$, the space of ternary cubic forms over $\C$;
\item[(iii)]
the action of $\SL_2\times\SL_4(\C)$ on $\C^2\otimes \Sym^2(\C^4)$,
the space of pairs of quaternary quadratic forms over $\C$; 
\item[(iv)]
the action of $\SL_5\times\SL_5(\C)$ on $\C^4\otimes \wedge^2(\C^5)$, the space of quintuples of $5\times 5$ skew-symmetric matrices over $\C$,
\end{itemize}
respectively.  Then the discriminant polynomial $g$ on each of these
representations detects stable orbits, i.e., $g$ does not vanish
precisely when the orbit is closed and has finite stabilizer.  The
discriminant $g$ of an element in any of these representations also
corresponds to the discriminant of the associated genus one model,
i.e., $g$ does not vanish precisely when this associated genus one
curve is smooth.

The dimensions of the representations in (i)--(iv) above are given by
$5$, $10$, $20$, and $50$, 
respectively, while the
degrees of the corresponding discriminant polynomials are given by
$6$, 12, 24, and 60, 
respectively (see~\cite{Fisher1} 
for explicit constructions of these invariant
polynomials). Then we prove:

\begin{theorem}\label{gthm}
The polynomials $g$ in {\rm (i)--(iv)} above are each irreducible.  Moreover, for each of these polynomials $g$, we have 
\begin{equation*}\label{localdensity3}
\lim_{N\to\infty} \frac{\#\{x\in \Z^m\cap[-N,N]^m:g(x) \mbox{ \rm
    squarefree}\}}{(2N+1)^m} = \prod_p(1-c_p/p^{2m}) 
\end{equation*}
where $c_p$
denotes the number of elements $x\in(\Z/p^2\Z)^m$ satisfying $g(x)=0$ in $\Z/p^2\Z$.
 \end{theorem}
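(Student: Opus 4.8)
The plan is to establish Theorem~\ref{gthm} in exactly the same way one proves Theorem~\ref{fthm}, since the polynomials $g_2,g_3,g_4,g_5$ are again discriminant polynomials of coregular representations $(G,V)$ with finite generic stabilizer, and thus fall under the general framework of Section~2. The irreducibility of each $g$ over $\bar\Q$ follows from the fact that the discriminant of a coregular representation with $\SL$-type group is irreducible: its zero locus is the (irreducible) non-stable locus, cut out by a single equation, so one checks that this hypersurface is irreducible (equivalently, that $g$ is not a proper power and has no nontrivial factorization compatible with the group action). For the squarefree density statement, the crucial point is that the sieve developed in Section~2---the ``geometric sieve''---applies to any invariant polynomial $f$ on a representation $(G,V)$ over $\Z$ for which $G$ is ``suitably large,'' and one must verify that $(G,V)=(\SL_2,\Sym^4),(\SL_3,\Sym^3),(\SL_2\times\SL_4,\C^2\otimes\Sym^2\C^4),(\SL_5\times\SL_5,\C^4\otimes\wedge^2\C^5)$ each satisfy these axioms.

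Concretely, I would proceed as follows. First, for each of the four representations I would confirm the numerical data: the dimension of $V$ equals $m\in\{5,10,20,50\}$, the discriminant polynomial $g$ has the stated degree $\{6,12,24,60\}$, and $g$ detects stability (this last fact is classical---e.g.\ \cite{Fisher1}---and is used to identify the vanishing locus of $g$). Second, I would split the count $\#\{x\in\Z^m\cap[-N,N]^m: p^2\mid g(x)\}$ for each prime $p$ into the contribution from $x$ lying in a ``small'' $p$-adic neighbourhood of the singular locus of the mod-$p$ reduction of the variety $\{g=0\}$, versus the contribution from $x$ away from that locus. Away from the singular locus, a squarefree-value estimate follows from a standard application of the Ekedahl-type sieve (counting solutions mod $p^2$ on a smooth hypersurface), giving the main term $\prod_p(1-c_p/p^{2m})$ and summability over all primes $p\le N^{1/(m-1)}$ or so. Near the singular locus---and for the ``large primes'' $p>N^{\epsilon}$ that the elementary sieve cannot reach---one invokes the group action: since $G$ acts with finitely many orbits on the relevant strata and $G(\Z/p\Z)$ is large (of size $\asymp p^{\dim G}$), the set of $x$ with $g(x)\equiv 0\ (p^2)$ lying near the bad locus can be covered by translates under $G(\Z/p^2\Z)$ of a lower-dimensional piece, and a cancellation/averaging argument over the group shows this contributes $o(N^m)$ uniformly. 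This is precisely the mechanism of the geometric sieve of Section~2, and the hypothesis that $G$ is ``suitably large'' is exactly what makes the uniformity over large $p$ work.

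The main obstacle, as in Theorem~\ref{fthm}, is the tail estimate over large primes: one must show $\sum_{p>M}\#\{x\in\Z^m\cap[-N,N]^m:p^2\mid g(x)\}=o(N^m)$ as $M\to\infty$ uniformly in $N$, and the naive bound (which would replace $c_p$ by something like $p^{m-1}$, giving a divergent sum $\sum p^{m-1}/p^{2m}\cdot N^m\cdot$(number of relevant $p$)) is not good enough when the degree of $g$ is large relative to $m$, or rather when one works in a single variable at a time. The resolution is to exploit that the codimension-$2$ part of $\{g=0\text{ with multiplicity}\}$ (the singular locus of the hypersurface, which is where the ``$p^2$-divisibility from a single prime'' genuinely concentrates) is $G$-stable and of low dimension, so its $p$-adic points are rare; for the remaining codimension-$1$ part the Lang--Weil/smooth-fibre count suffices. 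For the four cases (ii)--(iv) with large degree, the computation that the local densities $c_p$ are well-behaved (e.g.\ that $c_p/p^{2m}=O(p^{-2})$ with a controlled implied constant) rests on understanding the singularities of the discriminant locus---which, because these are prehomogeneous-type spaces, are governed by the orbit stratification and can be analysed uniformly. Once these inputs are in place, summing the Euler product $\prod_p(1-c_p/p^{2m})$ yields the stated limit; unlike in Theorem~\ref{fthm}, I would not expect the product to simplify to a universal constant, since the local orbit structure (and hence $c_p$) genuinely differs across the four genus-one families.
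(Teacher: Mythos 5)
There is a genuine gap: your plan treats all four polynomials $g_2,g_3,g_4,g_5$ uniformly by the Section~2 axioms, ``exactly as for Theorem~\ref{fthm}'', but this breaks down precisely at $g_2$. For $g_3,g_4,g_5$ the scheme does work (though note the mechanism is Condition~6 with $a=0$, $k=2$: a rational transformation in $G(\Q)$ sends a weak multiple of $p^2$ to an element of $W_p^{(1)}$ with the \emph{same} discriminant, rather than dividing the discriminant by $p^2$ as for $f_3,f_4,f_5$). For $n=2$, however, the symmetry group $\PGL_2$ is too small: if $x(s,t)=as^4+bs^3t+cs^2t^2+dst^3+et^4$ with $p\mid d$ and $p^2\mid e$ (the generic weak multiple), no element of $\PGL_2(\Q)$ removes the mod~$p^2$ condition --- the only candidate $\bigl(\begin{smallmatrix}1&\\&p^{-1}\end{smallmatrix}\bigr)$ returns another element of $W_p^{(2)}$ --- so Condition~6 fails outright and your blanket argument has no way to control the tail for $g_2$. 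The paper's resolution is an extra idea you do not have: the ``embedding sieve'', i.e.\ the at most $12$-to-$1$ map $\phi:\PGL_2(\Z)\backslash\Sym^4(\Z^2)\to\GL_2(\Z)\times\SL_3(\Z)\backslash\bigl(\Z^2\otimes\Sym^2(\Z^3)\bigr)$ with $g_2(v)=f_4(\phi(v))$, which transports the weak multiples into the $f_4$-space where the larger group does admit the required transformation; the $W_p^{(2)}$ estimate is then pulled back along $\phi$.

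A second, related problem is that your proposed mechanism for the large-prime tail --- covering the bad set by $G(\Z/p^2\Z)$-translates of a lower-dimensional piece plus a ``cancellation/averaging'' argument, and Lang--Weil counts near and away from the singular locus --- is not what delivers the needed uniformity and would not suffice. The Ekedahl/geometric sieve only handles the strong multiples, which reduce mod $p$ into a codimension~$\geq2$ subscheme; the weak multiples lie over a codimension-one locus mod $p$ with one additional mod-$p^2$ condition, and once $p^2$ exceeds the box size no equidistribution statement modulo $p^2$ (however the group acts on residues) controls their number uniformly in $p$. The actual uniform bound in the paper comes from combining the rational change of variables with the geometry-of-numbers input of Condition~5: counting irreducible $G(\Z)$-orbits of bounded invariant in a fundamental domain, with absolutely bounded fibers (Condition~6(iii)). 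Your write-up never invokes the orbit counts of \cite{BS,TC,foursel,fivesel}, so even for $g_3,g_4,g_5$ the key quantitative step is missing, and for $g_2$ the approach as stated cannot be repaired without the embedding into the $f_4$-representation.
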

Thus a positive density of genus one models over $\Z$ mapping into
$\,\P^1$, $\,\P^2$, $\,\P^3$, or $\,\P^4$, 
have squarefree discriminant.  In particular, a positive density of
binary quartic forms over $\Z$, and a positive density of ternary cubic forms
over $\Z$, have squarefree discriminant.

These results, and the methods behind them, play an important role in
establishing lower bounds on the average sizes of Selmer groups of
families of elliptic curves in \cite{BS,TC,foursel,fivesel} and in \cite{BH2}.  They also play a key role in proving that
the local--global principle fails for a positive proportion of plane
cubic curves over $\Q$ (see \cite{hasse}).
 
 \subsection{Method of proof}
 
 Let $f$ be an integral polynomial on $V(\Z)\cong \Z^m$.  As is
 standard in squarefree sieves (see, e.g., \S3.4 for more details), the
 equality (\ref{localdensity}) can be proven for $f$ whenever
 sufficiently good upper bounds on sums involving $w_p(f,H)$ are
 obtained, where $w_p(f,H)$ denotes the number of points $v\in V(\Z)$
 having height at most $H$ (= the maximum of the absolute values of
 the coordinates) satisfying $p^2\mid f(v)$.  It is natural to
 partition the set $W_p=W_p(V) \subset V(\Z)$ of elements $v\in V(\Z)$
 such that $p^2\mid f(v)$ into two sets: $W_p^{(1)}$, consisting of
 elements $v\in V(\Z)$ on which $f$ vanishes modulo $p^2$ for ``mod $p$
 reasons'', i.e., $f(v')\equiv 0$ (mod $p^2$) for any $v'\equiv v$
 (mod $p$); and $W_p^{(2)}$, consisting of the elements $v\in V(\Z)$
 on which $f$ vanishes modulo $p^2$ for ``mod $p^2$ reasons'', i.e.,
 there exist $v'\equiv v$ (mod $p$) such that $f(v')\not\equiv 0$ (mod
 $p^2$).

 As a consequence, we may write $w_p(f,H)$ as a sum
 $w_p^{(1)}+w_p^{(2)}$, where $w_p^{(i)}$ denotes the portion of the
 count of elements in $W_p$ coming from $W_p^{(i)}$.  It is well-known
 that good estimates on the relevant sums involving $w_p^{(1)}$ can be
 obtained by ``geometric sieve'' or ``closed-point sieve'' methods
 (the latter terminology is due to Poonen), as introduced in the work
 of Ekedahl~\cite{Ek}; see also Poonen~\cite{Poonen,Poonen2} for a
 very clear  
 treatment.  We will prove a precise and quantitative version of
 Ekedahl's sieve estimates in \S3.2, which will be useful in the
 applications.

 The difficulty in squarefree sieves for values taken by integral
 polynomials thus arises in the estimation of sums involving
 $w_p^{(2)}$.  It is essentially here that Granville~\cite{Granville}
 and Poonen~\cite{Poonen2} use the ABC Conjecture to obtain the desired
 estimates.  For the polynomials arising in
 Theorems~\ref{fthm} and~\ref{gthm}, we sidestep the use of the ABC
 Conjecture by using instead the invariance of these polynomials under
 the action of an algebraic group $G$ defined over $\Z$. Specifically,
 for the polynomials $f$ arising in Theorem~\ref{fthm}, we show that
 for any element $v\in W_p^{(2)}$, there always exists an element
 $\gamma\in G(\Q)$ such that $\gamma v\in V(\Z)$ and $f(\gamma
 v)=f(v)/p^2$. Together with estimates from the geometry-of-numbers in
 \cite{Davenport2,dodqf,dodpf} giving uniform upper bounds on the
 number of ``irreducible'' $G(\Z)$-classes on $V(\Z)$ having bounded
 absolute discriminant, this is sufficient to obtain the desired upper
 bounds on~$w_p^{(2)}$.

 With a related construction, for
all but one of the polynomials $g$ arising
in Theorem~\ref{gthm} we show that for any element $v\in W_p^{(2)}$,
there always exists an element $\gamma\in G(\Q)$ such that $\gamma
v\in W_p^{(1)}$ and $f(\gamma v)=f(v)$; i.e., via the action of $G(\Q)$,
we turn $v\in V(\Z)$ on which $f$ vanishes modulo~$p^2$ for mod~$p^2$ reasons into $v'$ for which $f$ vanishes modulo~$p^2$ for mod
$p$ reasons!  As before, we combine this construction with estimates
from the geometry-of-numbers as in \cite{BS,TC,foursel,fivesel},
which give uniform upper bounds on the number of ``irreducible''
$G(\Z)$-classes on $V(\Z)$ having bounded absolute discriminant, to
deduce the desired upper bounds on~$w_p^{(2)}$.

In 
Case (i) of Theorem~\ref{gthm},
however,
this argument does {not} work; we find that the group~$G(\Q)$ in 
this case is just too small to do the job.  We get around this problem
via a further argument that we call the ``embedding sieve''. Namely,
we find a representation~$G'$ on~$V'$, defined over $\Z$, and an
invariant polynomial $f'$ for this action, such that: there is a map
of orbits $\phi:G(\Z)\backslash V(\Z)\to G'(\Z)\backslash V'(\Z)$,
having preimages of absolutely bounded cardinality, for which $f'(\phi(v))=f(v)$.
Furthermore, we choose $(G',V')$ such that $G'(\Q)$ is sufficiently
larger than $G(\Q)$, while the set of irreducible orbits of
$G'(\Z)$ on $V'(\Z)$ is not too large; this allows one to obtain an
estimate $w_p^{(2)'}$ on $V(\Z)'$, which then leads to a good estimate
also for $w_p^{(2)}$. Amusingly, in the case of $g_2$ in Theorem~\ref{gthm}, we embed
$(G,V)$ into the representation $(G',V')$ corresponding to the
polynomial $f_4$ in Theorem~\ref{fthm}!

Indeed, the latter argument (which will be described in more detail in
\S5) shows that the method of this paper may in fact be applied to 
some polynomials $f$ that do not have a very large group of symmetries; in
such cases, we simply attempt to arrange a suitable embedding where
the method does apply to give the desired estimates.  Although we only
apply this embedding sieve in 
one case in this paper, it will serve
as a starting point in a sequel to this paper where we study
squarefree values of more general polynomials that may have fewer
symmetries.

This paper is organized as follows.  In Section 2, we enumerate 
a natural set
of axioms on an integral multivariate polynomial $f$
which is sufficient to deduce that $f$ takes the expected
density of squarefree values (i.e., $f$ satisfies
(\ref{localdensity})).  In Section 3, we then prove 
the latter assertion, 
by developing the geometric sieve method that we use to
extract squarefree values of such polynomials
satisfying these axioms.
Finally, in Sections 4 and 5, we then prove Theorems~\ref{fthm}
and~\ref{gthm}, by proving that all but one of the polynomials
occurring in these theorems satisfy the axioms of Section 2.  For the
remaining polynomial $g_2$, we describe an extension of these axioms
(the ``embedding sieve'') that allows us to prove (\ref{localdensity})
also for this polynomial.

\section{Some general criteria for extracting squarefree values of invariant polynomials}

Let $V$ be a representation of an algebraic group $G$ defined over $\Z$, and let $f$ be an integer polynomial of degree $d$ that is a relative invariant for the action of $G$ on $\Z$ and whose squarefree values we wish to extract.  Let $m:=\dim(V)$.  We use $G^1$ to denote the kernel of the determinant map $G\to \GL(V)\to \mathbb G_{\rm m}$. 

Suppose $f$, $G$, and $V$ have the following properties:
\begin{enumerate}
\item 
There is a 
notion of a {\it generic} element of $V(\Z)$; the subset $V(\Z)^\gen$ 
of generic elements in $V(\Z)$ is $G(\Z)$-invariant, and satisfies 
$$\mu(V(\Z)^\gen):=\lim_{N\to\infty} \frac{\#\{x\in V(\Z)^\gen\cap[-N,N]^m\}}{(2N+1)^m} = 1.$$
\item 
The order of the stabilizer  in $G(\bar\Q)$ of any element in $V(\Z)^\gen$ is finite and absolutely bounded.
\item There is a continuous (but not necessarily polynomial) invariant $I$ for the action of $G^1(\Z)$ on $V(\Z)$ that is homogeneous of degree $d$, i.e., $I(\lambda v)=\lambda^dI(v)$.
\item There is a fundamental domain $\FF$ for the action of $G^1(\Z)$ on $V(\R)$ such that 
the region $\FF_X:=\{v\in \FF:|I(v)|<X\}$ is measurable and homogeneously expanding, i.e., $\FF_X=X^{1/d}\FF_1$, and 
the volume $\Vol(\FF_X)$
of $\FF_X$
is finite. 
\item For any subset $S$ of $V(\Z)$ defined by congruence conditions modulo finitely many prime powers, we have 
\begin{equation}\label{nsxeq}
N(S;X):=\#\{v\in S\cap \FF_X \mbox{ generic}\} = \Vol(\FF_X)\cdot \prod_p\mu_p(S)+o(X^{m/d}),
\end{equation}
where $\mu_p(S)$ denotes the density of the $p$-adic closure of $S$ in $V(\Z_p)$.
\item Fix a prime $p$. If $v\in V(\Z)^\gen$ is an element such that $f(v)$ is a multiple of
  $p^2$, then there is a nonnegative real number $a=a_{v}$, an
  absolutely bounded integer $k=k_{v}\geq 0$, and an element $g=g_{v}\in
  G(\Q)$ such that
\begin{itemize}\item[{\rm (i)}] $|I(gv)|=p^{-a}|I(v)|$; 
 \item[{\rm (ii)}] the element $gv$ lies in $V(\Z)^\gen$ in the reduction (mod $p$) of a closed $G$-invariant subscheme $Y_k$ of $V$ (viewed as affine $n$-space) defined over $\Z$, depending only on $k$, that has codimension~$\geq k$; 
 \item[{\rm (iii)}] for each fixed $k$, every point of $Y_k(\Z)$ arises as $g_{v}v$ for some $v\in V(\Z)^\gen$ at most $c$ times up to $G(\Z)$-equivalence, where $c$ is an absolute constant;
 \item[{\rm (iv)}]  $\displaystyle\frac m d\cdot a+k-1$ is
   bounded below by an absolute positive constant $\eta$.
 \end{itemize}
\end{enumerate}

\begin{theorem}\label{maingeneral}
  If $f$, $G$, $V$ satisfy Conditions $1$--$6$, then $f$ takes the
  expected density of squarefree values, i.e., $f$ satisfies
  $(\ref{localdensity})$.
\end{theorem}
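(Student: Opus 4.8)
\medskip
\noindent\textbf{Proof proposal.} The plan is to run the standard squarefree sieve (as recalled in \S3.4) directly on the homogeneously expanding region $\FF_X$, handling the ``mod $p$'' contribution by the quantitative Ekedahl estimate of \S3.2 and the ``mod $p^2$'' contribution by Condition~6. By Condition~1 the non-generic points of $V(\Z)$ have density $0$, so it suffices to analyse $N_{\mathrm{sqf}}(X):=\#\{v\in\FF_X\cap V(\Z)^\gen: f(v)\text{ squarefree}\}$. For a prime $p$ set $w_p(X)=\#\{v\in\FF_X\cap V(\Z)^\gen: p^2\mid f(v)\}$, and write $w_p=w_p^{(1)}+w_p^{(2)}$ according to the partition $W_p=W_p^{(1)}\sqcup W_p^{(2)}$ from the introduction, where $W_p^{(1)}$ consists of those $v$ with $f(v')\equiv 0\pmod{p^2}$ for all $v'\equiv v\pmod p$. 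Since $|f(v)|\ll X^{O(1)}$ on $\FF_X\cap V(\Z)$ and $f(v)\ne 0$ for generic $v$, only finitely many $p$ contribute for each $X$. By inclusion--exclusion over squarefree moduli, the claim then reduces to: \emph{(a)} for each fixed $M$, the primes $p\le M$ contribute the expected local factors up to $o(X^{m/d})$ --- immediate from Condition~5 applied to the congruence sets $\{v:p^2\mid f(v)\}$, of density $c_p/p^{2m}$ in $V(\Z_p)$; and \emph{(b)} $\lim_{M\to\infty}\limsup_{X\to\infty}X^{-m/d}\sum_{p>M}w_p(X)=0$. Granting \emph{(a)} and \emph{(b)} and letting $M\to\infty$ gives $N_{\mathrm{sqf}}(X)=\Vol(\FF_X)\prod_p(1-c_p/p^{2m})+o(X^{m/d})$; since also $\#(\FF_X\cap V(\Z))=\Vol(\FF_X)+o(X^{m/d})$ by Condition~5, a proportion $\prod_p(1-c_p/p^{2m})$ of the integer points of $\FF_X$ give squarefree values of $f$, and the passage to the box-normalised form (\ref{localdensity}) is routine given Condition~1.

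For the $w_p^{(1)}$ part of \emph{(b)} I would invoke the geometric (closed-point) sieve. A short calculation shows $v\in W_p^{(1)}$ precisely when $p^2\mid f(v)$ and $p\mid\partial f/\partial x_i(v)$ for every $i$, so that $\bar v\pmod p$ lies on the subscheme $Z\subset V$ defined by $f$ and all its first partials. As $f$ is irreducible (Theorems~\ref{fthm}--\ref{gthm}), the hypersurface $\{f=0\}$ is reduced and irreducible for all but finitely many $p$, whence $Z$ has codimension $\ge 2$ in $V$; the quantitative version of Ekedahl's estimate proved in \S3.2 then yields $\sum_{p>M}w_p^{(1)}(X)=o_M(X^{m/d})$, with the bound tending to $0$ as $M\to\infty$ (the finitely many exceptional primes being thrown into the set treated exactly in \emph{(a)}).

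The heart of the matter is the $w_p^{(2)}$ part of \emph{(b)}, and here Condition~6 is used. Fix $p$ and a generic $v\in W_p^{(2)}\cap\FF_X$, and take $a=a_v\ge 0$, $k=k_v$, $g=g_v\in G(\Q)$ as in Condition~6. Then $w:=gv\in V(\Z)^\gen$ reduces mod $p$ into the codimension-$\ge k$ subscheme $Y_k$, and $|I(w)|=p^{-a}|I(v)|<p^{-a}X$. Grouping the $v$ by the boundedly many values of $k$ and using Condition~6(iii), I would bound $w_p^{(2)}(X)\le c\sum_{k}\#\{G(\Z)\text{-orbits of }w\in V(\Z)^\gen:\ |I(w)|<p^{-a}X,\ \bar w\in Y_k\pmod p\}$. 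The reduction of $Y_k$ occupies a fraction $\ll p^{-k}$ of $V(\Z_p)$, and the uniform upper bounds on the number of irreducible $G(\Z)$-orbits of bounded invariant height coming from the geometry-of-numbers of \cite{Davenport2,dodqf,dodpf,BS,TC,foursel,fivesel} (which use Conditions~2 and~4), combined with the scaling $\Vol(\FF_Y)=Y^{m/d}\Vol(\FF_1)$, bound the inner count by $\ll(p^{-a}X)^{m/d}\,p^{-k}=p^{-(\frac md a+k)}X^{m/d}$, uniformly in $p$. By Condition~6(iv) we have $\frac md a+k\ge 1+\eta$, so $w_p^{(2)}(X)\ll p^{-1-\eta}X^{m/d}$ and hence $\sum_{p>M}w_p^{(2)}(X)\ll X^{m/d}\sum_{p>M}p^{-1-\eta}\to 0$ as $M\to\infty$. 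This establishes \emph{(b)}.

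The step I expect to be the main obstacle is precisely this last estimate on $\sum_p w_p^{(2)}(X)$: one must run the geometry-of-numbers count over \emph{every} prime in the relevant range in a way whose accumulated error terms stay $o(X^{m/d})$, and it is the numerical calibration of Condition~6(iv) --- balancing the invariant-shrinking exponent $a$ against the codimension $k$ --- together with the uniformity in $p$ of the orbit-count bounds, that makes this possible. (Checking that the specific polynomials of Theorems~\ref{fthm}--\ref{gthm} satisfy Condition~6, the content of Sections~4--5, is a separate matter, and is where the representation theory actually enters.)
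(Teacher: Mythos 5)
Your overall architecture (split into strong and weak multiples, Ekedahl for $W_p^{(1)}$, Condition~6 for $W_p^{(2)}$, finish by inclusion--exclusion) matches the paper, but the step you yourself flag as the heart of the matter contains a genuine gap. You bound $w_p^{(2)}(X)\ll (p^{-a}X)^{m/d}\,p^{-k}$ \emph{uniformly in $p$}, by combining ``$Y_k$ has density $\ll p^{-k}$ mod $p$'' with the geometry-of-numbers orbit counts, and then sum over $p>M$ using Condition~6(iv). No tool available here gives that per-prime bound uniformly in $p$: Condition~5 only provides an error $o(X^{m/d})$ for a \emph{fixed} finite set of congruence conditions, with no uniformity in the modulus, and mod-$p$ equidistribution of lattice points in a region of linear scale $X^{1/d}$ simply fails once $p$ exceeds that scale -- this is exactly why Theorem~\ref{gsthm} carries the extra $r^{n-k+1}$ term rather than a clean $r^np^{-k}$ summed over all large $p$. (Note also that in the applications one has, e.g., $a=0$, $k=2$, where your claimed bound is $X^{m/d}p^{-2}$ for every $p$, however large -- precisely the statement the geometric sieve is designed to avoid having to prove prime by prime.) The paper's proof is structured to sidestep this: for $M<p\leq X^{1/(2d)}$ it gets $O(X^{m/d}/p^2)$ by the box-covering argument of (\ref{indest}) (legitimate because $p^2$ is below the linear scale); for $p>X^{1/(2d)}$ it never sums per-prime densities at all, but instead uses that a generic $v\in\FF_X$ has at most $d$ primes $p>X^{1/(2d)}$ with $p^2\mid f(v)$, maps $v\mapsto g_vv$ into $\FF_{X/p^{a}}\cap Y_k$ (mod $p$), and then bounds the \emph{union over all large $p$ at once}: for $k=0$ by summing $O((X/p^a)^{m/d})$ (convergent precisely by 6(iv)), and for $k\geq1$ by a single application of Theorem~\ref{gsthm} to $\FF_{X/M'^a}$ with the subscheme $Y_k$, as in (\ref{wp22}). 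Without this restructuring your estimate (b) is not established.

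Two further points are glossed. First, Theorem~\ref{gsthm} requires a compact region, while $\FF_X$ generally has cusps; the paper has to truncate to $\FF_X^{1-\epsilon}$, carry an $O(\epsilon X^{m/d})$ error through Lemma~\ref{westimate}, and let $\epsilon\to0$ at the end -- you apply Ekedahl to $\FF_X$ directly. Second, the passage from the fundamental-domain statement (\ref{ld2}) to the box statement (\ref{localdensity}) is not ``routine given Condition~1'': the upper bound is, but the lower bound for boxes requires covering $B_N$ by countably many $G(\Z)$-translates of $\FF_X$ (with $X\asymp N^d$), applying the $W_p$-estimates on finitely many translates $B_{N,s}$, and taking limits in $M$, $\epsilon$, and $s$; the weak-multiple bound is only available inside $G(\Z)$-translates of the fundamental domain, so this covering argument is where the box version is actually earned. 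Your identification of $W_p^{(1)}$ via $f$ and all first partials, and the reduction of its contribution to Ekedahl's sieve, is fine (the paper uses the codimension-two scheme $\{f=\partial f/\partial x_n=0\}$ of Lemma~\ref{stronglemma}, but your variant works for all but finitely many $p$).
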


While Conditions 1--6 may seem very restrictive, we will see in
Sections 4 and 5 that they are satisfied by all but 
one of the
polynomials in Theorems~\ref{fthm} and \ref{gthm} (and, indeed, by
many other polynomials, e.g., by a number of the discriminant polynomials 
occurring in \cite{BH}).
In general, the notion of {\it generic} in Condition 1 is chosen so
that the cusps of the fundamental domain $\FF$ in Condition 4 contain
mostly non-generic points.  Indeed, the integral points in the cusps
of such fundamental domains $\FF$ tend to lie primarily on certain
subvarieties; the lattice points in $V(\Z)$ that lie outside the union
of these subvarieties are then called {\it generic}.  Condition 2 is
of course common, and will generally be satisfied in any
representation that has stable orbits in the language of geometric
invariant theory.  With Conditions 1--4 satisfied, Condition 5 can
then be proven using geometry-of-numbers methods (as developed, e.g.,
in the works \cite{Davenport2,dodqf,dodpf,BS}).

Finally, Condition 6 is also true for 
all the representations and
polynomials in Theorem~\ref{fthm} and 
all but one of the representations and polynomials in
Theorem~\ref{gthm}.  
However, it is not true for the very first polynomial $g_2$ in Theorem~\ref{gthm}. 
In general, Condition~6 can be quite
restrictive (as opposed to Conditions 1--5), because there may not be
enough symmetries in~$G$ to satisfy the condition.  In such cases, we
may attempt to embed~$V$ into a larger representation that has more
symmetries for which (a suitable version of) Condition~6 {\it is}
satisfied!  This argument indeed works for the remaining 
representation, 
and will be important in future applications.

\begin{remark}\label{fxversion}{\em 
In the course of proving Theorem~\ref{maingeneral},
we will also show that the polynomials $f$ in this theorem---in
addition to satisfying (\ref{localdensity})---also satisfy
\begin{equation}\label{ld2}
\lim_{X\to\infty} \frac{\#\{x\in \FF_X\cap V(\Z)^\gen:f(x) \mbox{ squarefree }\}}{\#\{x\in \FF_X\cap V(\Z)^\gen\}}
=\prod_p(1-c_p/p^{2m})
\end{equation}
where again $c_p$ denotes the number of elements $x\in(\Z/p^2\Z)^m$ satisfying $f(x)=0$ in $\Z/p^2\Z$.}
\end{remark}

\section{A geometric squarefree sieve}

In this section, we describe the geometric sieve method that we use to
extract squarefree values of polynomials.  

In \S3.1--3.2 (see in particular Theorem~\ref{gsthm}), we lead up to a
quantitative version of a certain uniformity estimate due to
Ekedahl~\cite{Ek} (see also Poonen~\cite{Poonen,Poonen2} and
Poonen--Stoll~\cite{PS}).  Ekedahl shows that, in appropriate
situations, the usual inclusion--exclusion tail becomes ``negligible''
as the cut off defining the tail gets larger and larger.  For our
applications here as well as in future applications, we require
precise quantitative versions of these tail estimates (i.e., how
negligible is ``negligible''?), when counting lattice points in
homogeneously expanding regions.  Given any variety of codimension at
least 2 defined over $\Z$, these estimates will, in particular, yield
a method for sieving out those lattice points that, for some
sufficiently large~$p$, reduce (mod~$p$) to a point on the reduction
of that variety (mod~$p$).  The quantitative versions of the relevant
tail estimates that we prove in \S3.2 enable one also to obtain second
order terms or power-saving error terms in the applications.

In this article, we are particularly concerned with sifting out those
lattice points on which a given polynomial takes non-squarefree
values.  The application to this scenario is described in
Subsection~\ref{sqfreeapp}. In the final Subsection~3.4, we then prove
Theorem~\ref{maingeneral}
of Section~2, namely, that any integral polynomial $f$ satisfying the axioms of
Section~2 takes the expected number of squarefree values. 

\subsection{The number of lattice points in a homogeneously 
expanding region lying on a subvariety}

We start with the following simple and oft-used lemma that states that
the number of lattice points on a given variety in a homogeneously
expanding region in $\R^n$ grows at most polynomially in the linear
scaling factor, where the degree of the polynomial is the dimension of
the variety.  Though this result is well-known, we include a proof
here for completeness, and as a preparatory ingredient for the sieve
estimates in \S3.2.

\begin{lemma}\label{first}
  Let $B$ be a compact region in $\R^n$ having finite measure.  Let
  $Y$ be a variety in $\R^n$ of codimension $k\geq1$.
Then 
we have
\begin{equation}
\#\{a\in rB\cap Y\cap \Z^n\} \,=\, O(r^{n-k}),
\end{equation}
where the implied constant depends only on $B$ and on $Y$.
\end{lemma}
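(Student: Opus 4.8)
The plan is to reduce to a local statement by covering $B$ with finitely many boxes and, on each box, induct on the codimension $k$ by projecting away one coordinate. First I would use compactness of $B$: cover $B$ by finitely many axis-aligned cubes $Q_i$ of side length $1$ (say), so that $rB$ is covered by the dilates $rQ_i$, and it suffices to prove the bound $\#\{a\in rQ\cap Y\cap\Z^n\}=O(r^{n-k})$ for a single unit cube $Q$, with the implied constant depending on $Q$ and $Y$ (and then sum over the finitely many $i$). Replacing $Y$ by one of its irreducible components (there are finitely many, and the degree and dimension are controlled), I may assume $Y$ is irreducible of dimension $n-k$.

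For the inductive step, I would argue on the codimension. If $k=n$, then $Y$ is a finite set of points, and the bound $O(1)=O(r^0)$ is clear. For $1\le k<n$, since $Y$ has dimension $n-k<n$ it is a proper subvariety, so there is some coordinate, say $x_n$, such that $Y$ is not contained in any hyperplane $x_n=c$; equivalently, the projection $\pi:\R^n\to\R^{n-1}$ forgetting $x_n$ is generically finite on $Y$, with fibers of size at most some constant $D$ (the degree of $Y$ in the $x_n$ direction) off of a proper closed subset $Z\subsetneq\pi(Y)$. The image $\overline{\pi(Y)}$ is a variety in $\R^{n-1}$ of dimension $n-k$, hence of codimension $k-1$ there. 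Lattice points of $rQ\cap Y\cap\Z^n$ map under $\pi$ into $r\pi(Q)\cap\overline{\pi(Y)}\cap\Z^{n-1}$, with at most $D$ preimages each over $\pi(Y)\setminus Z$; the points lying over $Z$ are handled by a further application of the inductive hypothesis in codimension $k$ inside $\R^{n-1}$ (or simply absorbed, since $Z$ has strictly smaller dimension). By the induction hypothesis applied to $\overline{\pi(Y)}\subset\R^{n-1}$, the number of lattice points in the image is $O(r^{(n-1)-(k-1)})=O(r^{n-k})$, and multiplying by the bounded fiber size $D$ gives the claim.

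The main technical point to get right is the handling of the \emph{bad locus}: the coordinate hyperplanes over which the projection $\pi$ restricted to $Y$ is not finite, i.e.\ where a whole vertical line sits inside $Y$. Here I would note that such a line, being one-dimensional and contained in the irreducible $Y$, forces $\dim Y\ge 1$ and in fact the union of all such vertical lines forms a subvariety $Y'\subseteq Y$ of dimension $\le n-k$ that \emph{does} lie in a product $(\text{subvariety of }\R^{n-1}_{x_1,\dots,x_{n-1}})\times\R_{x_n}$; projecting this piece to $\R^{n-1}$ gives a variety of dimension $\le n-k-1$, i.e.\ codimension $\ge k$ in $\R^{n-1}$, so by induction it contributes only $O(r^{n-1-k})=O(r^{n-k-1})$ image points, each with $O(r)$ lattice points on the vertical segment $r Q\cap\{x_1,\dots,x_{n-1}\text{ fixed}\}$, for a total of $O(r^{n-k})$ — safely within budget. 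The only subtlety is bookkeeping the implied constants so they depend on $Y$ and $B$ alone and not on $r$; since at each stage the number of components, their degrees, and their dimensions are bounded purely in terms of $Y$, and the cube cover depends only on $B$, this is routine. I expect the vertical-line / bad-locus analysis to be the one place where care is genuinely needed; everything else is a clean projection-and-induction argument.
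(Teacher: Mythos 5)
Your proposal is correct, and at its core it runs on the same engine as the paper's proof: induct on the ambient dimension by projecting away one coordinate, bound the finite fibers by a constant depending only on the degrees, and give the degenerate locus a separate treatment in which one factor of $O(r)$ is spent on the vertical direction while the induction hypothesis in codimension $k$ inside $\R^{n-1}$ handles the base. The difference is in implementation: the paper manipulates explicit defining equations, using resultants to arrange that $f_1,\ldots,f_{k-1}$ do not involve $x_n$ and then splitting the count according to whether the leading coefficient $h_k$ of $f_k$ in $x_n$ vanishes at the point, whereas you argue geometrically, applying the induction hypothesis to the Zariski closure of $\pi(Y)$ (codimension $\geq k-1$, since projection cannot raise dimension) and isolating instead the cylinder locus of points whose entire vertical line lies in $Y$, whose image has codimension $\geq k$. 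Your route avoids elimination theory and is arguably cleaner, at the cost of invoking standard facts about dimensions of images and the uniform fiber bound (a fiber over a point off the cylinder locus is cut out on a line by the restricted defining polynomials, not all identically zero, so its size is bounded by their degrees). Two small slips, neither fatal: the asserted equivalence between ``$Y$ is not contained in any hyperplane $x_n=c$'' and ``the projection forgetting $x_n$ is generically finite on $Y$'' is false (the line $x_1=0$ in $\R^2$ lies in no hyperplane $x_2=c$, yet projecting away $x_2$ collapses it to a point); however your proof never actually needs a well-chosen coordinate, since the cylinder-locus paragraph covers exactly the failure of generic finiteness and the main case only uses $\dim\overline{\pi(Y)}\leq n-k$. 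Also, when $k=1$ the image may have codimension $0$ in $\R^{n-1}$, where the stated induction hypothesis does not literally apply, but the trivial bound $O(r^{n-1})$ of lattice points in a box suffices there.
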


\begin{proof}
We may clearly assume that $Y$ is irreducible, for otherwise we could simply sum over
the irreducible components of $Y$.  Since $Y$ has codimension $k$ in $\R^n$, there exist
polynomials $f_1,\ldots,f_k$ for which
\begin{equation}
Y \subseteq 
Y'':=
\{a\in \R^n \,|\, f_1(a)=f_2(a) = \cdots = f_k(a) = 0\}
\end{equation}
such that the irreducible component $Y'$ of $Y''$ containing $Y$ 
also has codimension $k$.

We prove the estimate of Lemma~\ref{first} for $Y'$ in place of $Y$,
by induction on $n$, using the polynomials $f_1,\ldots,f_k$. 
 We always write the $f_i$ as polynomials in the arguments
$x_1,\ldots,x_n$.   
For the proof, we may clearly assume that each
$f_i$ is irreducible, for otherwise we can simply replace each $f_i$ by the irreducible factor of $f_i$ 
that vanishes on $Y'$. 
If all the $f_i$ do not involve some variable, say $x_n$, then the result follows by the induction hypothesis. 
So we may assume that every variable $x_1,\ldots,x_n$ occurs in at least one $f_i$. 

We now show, via elimination theory, that we may reduce to the case
where $k-1$ of the~$f_i$, say $f_1,\ldots,f_{k-1}$, all do not involve
some fixed variable, say $x_n$.  Indeed, by reordering the $f_i$ if necessary, let us assume that $f_k$ is nonconstant as a
polynomial in $x_n$.  Let $R_n(f_i,f_k)$ denote the resultant of $f_i$ and $f_k$ with respect to $x_n$. 
Since $R_n(f_i,f_k)=A_if_i+B_if_k$ for some polynomials $A_i$ and $B_i$ with $A_i$ nonzero, 
the irreducible component containing $Y$ in the variety cut out by 
$R_n(f_1,f_k),\ldots,R_n(f_1,f_{k-1}),f_k$ (= the variety cut out by $A_1f_1,\ldots,A_{k-1}f_{k-1},f_k$) is still $Y'$.
Thus we may simply replace each $f_i$ involving $x_n$ (for $i\in\{1,\ldots,k-1\}$) by $R_n(f_i,f_k)$, and we see that the irreducible component containing $Y$ of the new $Y''$ cut out by the new $f_i$ is still the variety $Y'$ 
of codimension $k$, where now $f_1,\ldots,f_{k-1}$ do not involve $x_n$.

Thus it suffices to prove the lemma when $f_1,\ldots,f_{k-1}$ are
polynomials only in $x_1,\ldots,x_{n-1}$.
Let $h_k$ denote the leading coefficient of $f_k$ as a polynomial in
$x_n$, so $h_k$ is a polynomial in $x_1,\ldots,x_{n-1}$. We may assume that $h_k$ does not 
vanish on $Y'$,
for otherwise we might as well eliminate the leading term of $f_k$, and
$f_1,\ldots,f_k$ would still cut out a variety $Y''$ whose irreducible component containing $Y$ is $Y'$.
Let $Z$ be the union of the irreducible components intersecting $Y'\cap\{h_k=0\}$ of the variety cut out by $f_1,\ldots,f_{k-1},h_k$ in~$\R^{n-1}$.  Then $Z$ is of codimension~$k$ in~$\R^{n-1}$.

We now partition $\Z^n$ into two sets of points: those on which $h_k$ vanishes and those on which it does not.  For the set of points where $h_k$ vanishes, we have
\begin{equation}
\#\{a\in rB\cap Y\cap \Z^n\,|\, h_k(a)=0\} \,=\, O(r^{n-1-k})\cdot O(r) \,=\, O(r^{n-k}),
\end{equation}
since there are at most $O(r^{n-1-k})$ eligible values for the first
$n-1$ arguments by the induction hypothesis applied to $Z$,
and then there are at most $O(r)$ possible values for the last coordinate of a point $a\in rB$.  

To handle  the points where $h_k$ does not vanish, if $d$ denotes the degree of $f_k$ as a polynomial in
$x_n$, then once values of $x_1,\ldots,x_{n-1}$ are fixed satisfying
$h_k(x_1,\ldots,x_{n-1})\neq 0$, then there are at most $d$ values for
$x_n$ satisfying $f_k(x_1,\ldots,x_n)=0$.  Therefore, using again the
induction hypothesis on the irreducible component containing $Y'$ of $\{f_1=\cdots=f_{k-1}=0\}$ of codimension $k-1$, we see that
\begin{equation}
\#\{a\in rB\cap Y\cap \Z^n\,|\, h_k(a)\neq 0\} \,=\, O(r^{(n-1)-(k-1)})\cdot d \,=\, O(r^{n-k}),
\end{equation}
as desired.
\end{proof}

\begin{remark}{\em Note that the $O(r^{n-k})$ estimate of Lemma~\ref{first} is
    optimal and is achieved for varieties of degree 1.  In the case of
    varieties of degree $>1$, the result can be improved, in some
    cases significantly, depending on the variety; see
e.g., the work of Heath-Brown~\cite{HB} and more recently of 
Salberger and Wooley~\cite{Sal}. We do not
    include their results here because, for our particular application,
    we require a result that includes also the case of degree 1.}
\end{remark}

\subsection{The number of lattice points in a homogeneously expanding
  region that reduce modulo some sufficiently large prime $p$ to a
  point on the reduction of a given variety modulo $p$}

We are now in a position to prove an upper asymptotic estimate for the
number of lattice points in a homogeneously expanding region in $\R^n$
that reduce modulo $p$, for some sufficiently large $p>M$, to an
$\F_p$-point of a given variety $Y\subset \A^n$ defined over $\Z$.  In
\cite{Ek}, Ekedahl proved that the
asymptotic proportion of such points in a box of sidelength $r$, as
$r\to\infty$, approaches 0 as $M\to\infty$, assuming that 
$Y$ has codimension at least two.
 
 We prove here the following precise quantitative version of Ekedahl's result:

\begin{theorem}\label{gsthm}
  Let $B$ be a compact region in $\R^n$ having finite measure, and let
  $Y$ be any closed subscheme of $\A^n_\Z$ of codimension $k\geq1$.
Let $r$ and $M$ be positive real numbers.  Then we have
\begin{equation}\label{gseq}
\#\{a\in rB\cap \Z^n\,\,|\,\,a\!\!\!\!\!\pmod{p} \in Y(\F_p)\,\,\mbox{for some prime } p>M\} \,=\, 
O\left(\frac{r^n}{M^{k-1}\log M}+{r^{n-k+1}}\right)
\end{equation}
where the implied constant depends only on $B$ and on $Y$.  
\end{theorem}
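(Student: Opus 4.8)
The plan is to bound the count separately over small and large primes relative to $r$. Since $B$ is compact, fix $C_0>0$ with $B\subseteq[-C_0,C_0]^n$ and put $T:=2C_0r$, so that distinct points of $rB\cap\Z^n$ have distinct reductions modulo every prime $p>T$. For the small primes $M<p\le T$ a crude union bound suffices: the number of $a\in rB\cap\Z^n$ with $a\bmod p\in Y(\F_p)$ is at most $|Y(\F_p)|$ times the largest number of points of $[-C_0r,C_0r]^n\cap\Z^n$ in a fixed residue class mod $p$, hence at most $|Y(\F_p)|\cdot(2C_0r/p+1)^n$. Using the elementary bound $|Y(\F_p)|=O(p^{n-k})$ (with constant depending only on $Y$, since $\dim Y=n-k$), a prime $p\le r$ contributes $O(r^n/p^k)$, and summing over $M<p\le r$ with $\sum_{p>M}p^{-k}=O(M^{1-k}/\log M)$ --- from the prime number theorem by partial summation, for $k\ge2$ --- gives $O(r^n/(M^{k-1}\log M))$; the primes $r<p\le T$ contribute $O(r^{n-k})$ each, hence $O(r^{n-k+1})$ in total. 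If $k=1$ the asserted estimate is just the trivial bound $O(r^n)$, so we may assume $k\ge2$.

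The heart of the matter is the large-prime range $p>T$: one must show the number of $a\in rB\cap\Z^n$ with $a\bmod p\in Y(\F_p)$ for \emph{some} prime $p>T$ is $O(r^{n-k+1})$, and I would do this by induction on $n$, in the form of the statement $P(n,k)$: for all closed $Y\subseteq\A^n_\Z$ of codimension $\ge k$ and all compact $B'\subseteq\R^n$, the corresponding count is $O(r^{n-k+1})$. Here $k=1$ gives the trivial bound and anchors the induction in each dimension, and one may take $Y$ irreducible by summing over its components. After acting by a generic enough element of $\GL_n(\Z)$ --- merely replacing $B'$ by another compact region --- one arranges that the projection $\pi\colon\A^n\to\A^{n-1}$ onto the first $n-1$ coordinates restricts on $Y$ to a dominant, generically finite morphism onto a closed $W\subseteq\A^{n-1}$ of codimension $k-1$, with every fibre of $Y\to W$ of size $\le\delta$ away from a proper closed $W'\subsetneq W$, which then has codimension $\ge k$ in $\A^{n-1}$; these properties persist mod $p$ for $p>M_0(Y)$, the finitely many smaller primes being absorbed above. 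Writing $a=(b,a_n)$ with $a\bmod p\in Y(\F_p)$, necessarily $b\bmod p\in W(\F_p)$, and there are two cases. If $b\bmod p\in W'(\F_p)$, then $P(n-1,k)$ bounds the number of such $b$ by $O(r^{(n-1)-k+1})=O(r^{n-k})$ and $a_n$ contributes at most a further $O(r)$. If $b\bmod p\notin W'(\F_p)$, then $a_n\bmod p$ --- hence $a_n$, as $|a_n|<p/2$ --- is one of $\le\delta$ values once $b,p$ are fixed; $P(n-1,k-1)$ (applied to the image region) bounds by $O(r^{n-k+1})$ the number of $b$ with $b\bmod p\in W(\F_p)$ for some $p>T$, and each such $b$ yields only $O(r)$ admissible $a_n$: if $b$ lies on $W$ over $\Q$ this comes from the monic fibre polynomial in $x_n$, of degree $\le\delta$ with uniformly bounded denominators, plus the fact that a fixed nonzero integer of polynomial size has $O(1)$ prime factors exceeding $T$; if $b$ does not lie on $W$ over $\Q$, then only $O(1)$ primes $p>T$ can satisfy $b\bmod p\in W(\F_p)$ since some defining equation of $W$ is nonzero and of size $O(r^{O(1)})$ at $b$. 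Combined with Lemma~\ref{first} to count the $b\in W(\Q)$, every case contributes $O(r^{n-k+1})$, closing the induction; together with the small-prime estimate this proves the theorem.

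The obstacle is entirely the large-prime regime. A direct union bound over $p>T$ diverges, as $|Y(\F_p)|$ grows like $p^{n-k}$ although only boundedly many such points lie in the small box $rB$; and one cannot reduce to counting integral points of a codimension-$k$ complete intersection containing $Y$, because $V(f_1,\dots,f_k)$ generically acquires spurious components of codimension $<k$ contributing far too many integral points. The projection induction evades this by exchanging one coordinate for one unit of codimension at each step while keeping the relevant fibres finite; the point requiring real care is separating those lattice points $b$ that genuinely lie on the image $W$ over $\Q$ from those that do so only modulo some large prime, which rests on the elementary fact that a fixed nonzero integer of size $O(r^{O(1)})$ has only $O(1)$ prime factors larger than $T\asymp r$.
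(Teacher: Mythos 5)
Your proof is correct, and at bottom it runs on the same two engines as the paper's: for $M<p\le O(r)$ the union bound $\#Y(\F_p)=O(p^{n-k})$ over $O((r/p)^n)$ boxes of side $p$ (this is exactly the estimate (\ref{indest})), and for $p\gg r$ an induction that trades one coordinate for one unit of codimension, powered by Lemma~\ref{first} together with the fact that a nonzero integer of size $r^{O(1)}$ has only $O(1)$ prime factors exceeding $r$. The differences are in implementation. The paper performs the elimination explicitly with resultants, so that the ``image'' is the cylinder $Y_{k-1}$ cut out by $f_1,\dots,f_{k-1}$ and the bad locus is $Z$ (coming from the leading coefficient $h_k$), and it runs the induction on \emph{pairs} $(a,p)$ with $a\notin Y'(\Z)$, as in (\ref{strengthening}); your three cases---$b$ on $W'$ mod $p$, $b$ an integral point of $W$, and $b$ off $W$ over $\Q$ but on $W$ mod $p$---correspond precisely to the sets (\ref{est2}), (\ref{est1}), and (\ref{est3}). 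You instead use a generic $\GL_n(\Z)$ change of coordinates and the scheme-theoretic image $W$ with its non-finite locus $W'$, and run the induction on \emph{points}, recovering the control that the paper extracts from pair-counting by noting that a point $b\notin W(\Q)$ admits only $O(1)$ primes $p>T$ with $b\bmod p\in W(\F_p)$, while the $b\in W(\Q)$ are counted by Lemma~\ref{first} and given the trivial $O(r)$ choices of $a_n$. This assembles to $O(r^{n-k+1})$ in every case, so the induction closes; the geometric formulation is arguably cleaner, at the cost of invoking standard spreading-out facts (generic finiteness of $Y\to W$ away from $W'$, and the codimensions of $W$, $W'$, persisting mod $p$ for $p>M_0(Y)$) that the paper's resultant construction builds in by hand. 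Two small points of care in your write-up: the umbrella clause ``each such $b$ yields only $O(r)$ admissible $a_n$'' must not be multiplied against the $P(n-1,k-1)$ count $O(r^{n-k+1})$ (that would give $r^{n-k+2}$); it is the finer dichotomy you state immediately afterwards, and which your closing sentence uses, that does the work. Likewise the bound $\#Y(\F_p)=O(p^{n-k})$ and the persistence statements require discarding the finitely many primes $p\le M_0(Y)$ where the fibre of $Y$ may have excess dimension, i.e., the reduction to $M>M_0$ that the paper makes explicitly at the outset.
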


\begin{proof}
  We may again assume that $Y$ is irreducible, for otherwise we could
  simply sum over the irreducible components of $Y$.  
Since $Y$ is a closed
  subscheme of $\A^n_\Z$ of codimension $k$, there exist integral
  polynomials $f_1,\ldots,f_k$ such that for any ring $T$, we have
\begin{equation}
Y(T) \subseteq Y''(T):=\{a\in T^n \,|\, f_1(a)=f_2(a) = \cdots = f_k(a) = 0\}
\end{equation}
whose irreducible component $Y'$ containing $Y$ has codimension $k$. 
Furthermore, there exists a constant $M_0>0$ such that  
$Y'$ (mod $p$) has codimension $k$ in $\A^n_{\F_p}$ for all
primes~$p>M_0$.  Since $M_0$ depends only on $Y$, for the purposes of
proving Theorem~\ref{gsthm} we may assume that $M>M_0$.

  Now, by Lemma~\ref{first}, we know that the number of points $a\in
  rB\cap\Z^n$ such that 
  $a\in Y'(\R)$
is $O(r^{n-k})$.
  Thus it      suffices to restrict ourselves to considering those points
  $a\in rB\cap \Z^n$ for which 
  $a\notin Y'(\Z)$. 

Since the result is trivial for $k=1$, we may assume that $k\geq 2$.  
  In this case, we will prove a slight strengthening of the theorem by
  showing
  that 
\begin{equation}\label{strengthening}
\#\{\,(a,p)\,\,|\,\,a\in rB\cap \Z^n, \; p>M,\; a\notin Y'(\Z), \; a\!\!\!\!\!\pmod{p} \in Y'(\F_p)\} \,=\, 
O\left(\frac{r^n}{M^{k-1}\log M}+{r^{n-k+1}}\right).
\end{equation}

  We first count those pairs $(a,p)$ on the left side of
  (\ref{strengthening}) for each prime $p$ satisfying
  $p\leq r$; such primes arise only when $r>M$.  In this case, since
  $\#Y'(\F_p)=O(p^{n-k})$ and $rB$ can be covered by $O((r/p)^n)$
  boxes each of whose sides have length $p$, we conclude that the
  number of $a\in rB\cap\Z^n$ such that $a$ (mod~$p$) is in $Y'(\F_p)$
  is $O(p^{n-k})\cdot O(r^n/p^n) = O(r^n/p^k)$.  Thus the total number
  of desired pairs $(a,p)$ with $p\leq r$ is at most
\begin{equation}\label{indest}
\#\{\,(a,p)\,\,|\,\,a\in rB\cap \Z^n, \; M<p\leq r, \; a\!\!\!\!\!\pmod{p} \in Y'(\F_p)\} \,=\, 
\sum_{M<p\leq r} O\Bigl(\frac{r^n}{p^k}\Bigr) = O\Bigl(\frac{r^n}{M^{k-1}\log M}\Bigr).
\end{equation}

We next turn to the case of counting 
pairs $(a,p)$ 
where $p>r$, and 
show
that 
\begin{equation}\label{indest2}
\#\{\,(a,p)\,\,|\,\,a\in rB\cap \Z^n, \; p>r,\; a\notin Y'(\Z), \;
a\!\!\!\!\!\pmod{p} \in Y'(\F_p)\} \,=\, 
O\left(r^{n-k+1}\right).
\end{equation}
Note first that (\ref{indest2}) is true also for $k=1$: if $a\notin
Y'(\Z)$, then some $f_i$ does not vanish on $a$, and since
$f_i(a)=O(r^{\deg(f_i)})$, we see that $f_i(a)$ can have at most
$O(1)$ prime factors $p>r$. 

For $k\geq 2$, we prove (\ref{indest2}) by induction on $n$. 
As before, we write the $f_i$ as polynomials in the arguments
$x_1,\ldots,x_n$, and for the same reasons as in the proof of
Lemma~\ref{first}, we may assume that: 1) each $f_i$ is irreducible; 2)
$f_1,\ldots,f_{k-1}$ are polynomials only in $x_1,\ldots,x_{n-1}$;
and, 3) the leading coefficient~$h_k$ of $f_k$, as a polynomial in
$x_n$, does not 
vanish on $Y'$. Let $Y_{k-1}$ denote the irreducible component containing $Y'$ of the closed subscheme of $\A^n_\Z$ cut out by $f_1,\ldots,f_{k-1}$, and let~$Z$ denote the union of irreducible components intersecting $Y'\cap\{h_k=0\}$ of the closed subscheme of $\A^n_\Z$ cut out by $f_1,\ldots,f_{k-1},h_k$.  Then $Y_{k-1}$ and $Z$ yield subschemes in $\A^{n}_\Z$ (and in $\A^{n-1}_\Z$, via ignoring the last free coordinate) of codimensions $k-1$ and $k$, respectively.
We may write
\begin{eqnarray}
\label{totalest}
\!\!\!\!\!\!\!\!& \!\!\!\!\!\!\!\!\!\!\!
&\normalsize{\mbox{$\{(a,p)\,|\,a\in rB\cap\Z^n$, $p\!>\!r$, $a\notin
    Y'(\Z)$, $a$ (mod $p$) $\in Y'(\F_p)$\}}}\\
\label{est1}
\!\!\!\!\!\!\!\!&\!\!\!\!\!\!\!\!\!\!\!\normalsize{\subseteq}\!\!\! &
\normalsize{\mbox{$\{(a,p)\,|\,a\in rB\cap\Z^n$, $p\!>\!r$, $a\in Y_{k-1}(\Z)$, $f_k(a)\neq 0$, 
$f_k(a)\equiv$ 0 (mod $p$)\}}\,\cup}\\
\label{est2}
\!\!\!\!\!\!\!\!&\!\!\!\!\!\!\!\!\!\!\! &
\normalsize{\mbox{$\{(a,p)\,|\,a\in rB\cap\Z^n$, $p\!>\!r$, $a\notin
    Y_{k-1}(\Z)$, $a$ (mod $p$) $\in Z(\F_p)$\}}\,\cup} \\
\label{est3}
\!\!\!\!\!\!\!\!&\!\!\!\!\!\!\!\!\!\!\! &
\normalsize{\mbox{$\{(a,p)\,|\,a\in rB\cap\Z^n$, $p\!>\!r$, $a\notin
    Y_{k-1}(\Z)$, $a$ (mod $p$) $\in Y'(\F_p)$, $h_k(a)\not\equiv 0$ (mod $p$)\}}.}
\end{eqnarray}
We estimate the size of the set (\ref{totalest}) by giving estimates
for each of the sets in (\ref{est1})--(\ref{est3}).

Let $P(rB)$ denote the projection of $rB$ onto the first $n-1$
coordinates.  To give an upper estimate for the set in (\ref{est1}),
we note that the number of points $b\in P(rB)\cap \Z^{n-1}$ such that
$(b,\,\cdot\,)\in Y_{k-1}(\Z)$ is 
$O(r^{(n-1)-(k-1)})$ by Lemma~\ref{first}.  If furthermore $(b,a_n)\in
rB\cap\Z^n$ satisfies $f_k(b,a_n)\neq0$, then again $f_k(b,a_n)$ has
at most $O(1)$ prime factors $p>r$.  Thus the total number of
pairs $(a,p)$, where $a=(b,a_n)\in rB\cap \Z^n$ and $p>r$, such that
$a\in Y_{k-1}(\Z)$, 
$f_k(a)\neq 0$, and $f_k(a)\equiv0$ (mod
$p$), is at most $$O(r^{(n-1)-(k-1)})\cdot O(r) \cdot O(1) =
O(r^{n-k+1}),$$ giving the desired estimate for set (\ref{est1}).

Next, we see that the total number of pairs $(a,p)$ in the set
(\ref{est2}) is also at most \linebreak $O(r^{n-k+1})$,
since there are at most $O(r^{(n-1)-k+1)})$ values for the
first $n-1$ arguments by the induction hypothesis applied to $Z$, and then at most
$O(r)$ possible values for the last coordinate for a point in~$rB$.

Finally, we give an upper estimate for the size of the set
(\ref{est3}).  By the induction hypothesis applied to $Y_{k-1}$, the total number of pairs
$(b,p)$, where $b\in P(rB)\cap\Z^{n-1}$ and $p>M$, such that $(b,\,\cdot\,)\notin Y_{k-1}(\Z)$, 
$(b,\,\cdot\,)$ (mod $p$) $\in Y_{k-1}(\F_p)$, 
and $h_k(b,\,\cdot\,)\not\equiv 0$ (mod~$p$) is
$O(r^{(n-1)-(k-1)+1})$.  Given such a pair $(b,p)$, the number of
values of $a_n$ such that $f_k(b,a_n)\equiv 0$ (mod $p$) and
$a=(b,a_n)\in rB\cap\Z^n$ is at most $d\cdot O(1)$, where $d$ denotes
the degree of $f_k$ as a polynomial in $x_n$.  Indeed, $a_n$ (mod $p$)
must be one of the $\leq d$ roots of $f_k$ (mod $p$) in that case, and
the number of such integers $a_n$ in the union of a bounded number of
intervals in $\R$ having total measure at most $O(r)=O(p)$, that are
congruent (mod $p$) to one of these $\leq d$ values, is at most $d\cdot
O(1)=O(1)$ (since $d$ is a constant depending only on $Y$).  We
conclude that the total number of pairs $(a,p)$, where $a=(b,a_n)$,
that lie in the set (\ref{est3}) is at most $O(r^{(n-1)-(k-1)+1})\cdot
O(1) \,=\, O(r^{n-k+1})$, as desired.
\end{proof}
\begin{remark}{\em 
  Tracing through the proof, it is clear  that the bound in Theorem~\ref{gsthm}
can be achieved for suitable choices of $Y$, 
and so
the bound 
is 
 essentially optimal without further assumptions on~$Y$.
    }
\end{remark}

Theorem~\ref{gsthm} also has a number of variations, which can be useful in various sieves depending on context.  One natural variation is when the region of interest in $\R^n$ is not just homogeneously expanding, but is also being applied with more general linear transformations in $\GL_n$, such as diagonal matrices and shears.  

\begin{theorem}\label{gsthm2}
  Let $B$ be a compact region in $\R^n$ having finite measure, and let
  $Y$ be any closed subscheme of $\A^n_\Z$ of codimension $k\geq2$ such that the Zariski closure of the projection of $Y$ onto the first $n-k+j$ coordinates has codimension $j$ in $\A^{n-k+j}$ for $j=0,\ldots,k$. 
Let $r$ and $M$ be positive real numbers, and let $t={\rm
  diag}(t_1,\ldots,t_n)$ be a diagonal element of $\SL_n(\R)$.
Suppose that $\kappa>0$ is a constant such that $rt_i\geq \kappa$ for
all $i$ and $t_i\geq\kappa$ for all $i>n-k$.  Then we have
\begin{equation}\label{gseq2}
\#\{a\in rtB\cap \Z^n\,\,|\,\,a\!\!\!\!\!\pmod{p} \in Y(\F_p)\,\,\mbox{for some prime } p>M\} \,=\, 
O\left(\frac{r^n}{M^{k-1}\log M}+{r^{n-k+1}}\right),
\end{equation}
where the implied constant depends only on $B$, $Y$, and $\kappa$.
\end{theorem}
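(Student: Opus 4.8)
The plan is to reduce the statement to Theorem~\ref{gsthm} by tracking how the linear transformation $t$ interacts with the inductive structure of that proof. The key observation is that the proof of Theorem~\ref{gsthm} proceeds by eliminating one coordinate at a time, and at each stage the relevant bound depends only on (a) the number of lattice points of $rB$ in the coordinate directions being retained and (b) the codimension of the projection of $Y'$ onto those coordinates. The extra hypothesis on $Y$---that the Zariski closure of the projection onto the first $n-k+j$ coordinates has codimension exactly $j$ for each $j=0,\ldots,k$---is precisely what guarantees that each of the $k$ elimination steps strictly decreases the codimension, so that the inductive bookkeeping goes through with the coordinates peeled off in the prescribed order (last coordinate first).

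The first step is to redo the elimination-theory reduction of Lemma~\ref{first} and Theorem~\ref{gsthm}, but with the variables eliminated in the fixed order $x_n, x_{n-1}, \ldots, x_{n-k+1}$ rather than an arbitrary order; the projection hypothesis on $Y$ ensures that at each stage the remaining polynomials genuinely cut out a variety of the correct codimension in the remaining coordinates, so that $Y'$ and the auxiliary subschemes $Y_{k-1}$, $Z$ all have the claimed codimensions after projection. The second step is to redo the counting in \eqref{indest}--\eqref{indest3} with $rB$ replaced by $rtB$. Here the point is that $rtB$ is contained in a box of side lengths $rt_1,\ldots,rt_n$; the hypothesis $rt_i\geq\kappa$ for all $i$ and $t_i\geq\kappa$ for $i>n-k$, together with $t\in\SL_n(\R)$ (so $\prod_i t_i = 1$), lets one bound the relevant lattice-point counts. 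Specifically, for the $p\leq r$ range one covers $rtB$ by $O\big(\prod_i (rt_i/p + 1)\big)$ boxes of side $p$; since each $rt_i\geq\kappa$ and the product of the $rt_i$ is $r^n$, this is $O(r^n/p^n)$ up to a constant depending on $\kappa$, and multiplying by $\#Y'(\F_p)=O(p^{n-k})$ reproduces the $O(r^n/p^k)$ bound and hence the $O(r^n/(M^{k-1}\log M))$ term after summation. For the $p>r$ range, the induction eliminates coordinates $x_n,\ldots,x_{n-k+1}$; in each of these directions the side length $rt_i$ is at least $\kappa$ (using $t_i\geq\kappa$ and $r\geq$ the relevant lower bound, or directly $rt_i\geq\kappa$), and one gets $O(rt_i)$ choices rather than $O(r)$, while the retained coordinates contribute $O\big(\prod_{i\le n-k}(rt_i)^{\,\text{(partial codim drop)}}\big)$ via the projection hypothesis and the induction hypothesis; since $\prod_{i} (rt_i) = r^n$, collecting these factors yields exactly $O(r^{n-k+1})$, matching \eqref{indest2}.

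The main obstacle I expect is the second step: verifying that the product of side-length factors $\prod_i (rt_i)$ really does collapse to the clean powers of $r$ appearing in the statement, uniformly in $t$ and with the implied constant depending only on $\kappa$ (and $B$, $Y$). This requires care because the individual $t_i$ can be very large or small, and one must use the constraint $\prod t_i = 1$ at exactly the right moment---after the codimension drops have been accounted for---to avoid picking up spurious factors of $t_i$. The cleanest way to organize this is to observe that at every stage of the induction the lattice-point bound has the shape ``(product of side lengths of retained coordinates raised to the current codimension deficit) $\times$ (product of side lengths of eliminated coordinates)''; the projection hypothesis on $Y$ forces the codimension deficit to behave additively, and the $\SL_n$ constraint then converts the total product of all side lengths into $r^n$. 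Once this accounting is set up, the estimates \eqref{est1}--\eqref{est3} go through verbatim with $r$ replaced by the appropriate $rt_i$ and the final collection of terms gives \eqref{gseq2}.
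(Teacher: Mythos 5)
Your overall strategy is the one the paper itself intends: the paper's entire proof of Theorem~\ref{gsthm2} consists of remarking that the analogue of Lemma~\ref{first} holds with $rB$ replaced by $rtB$ and that the argument of Theorem~\ref{gsthm} then goes through, with the projection hypothesis on $Y$ ensuring that the elimination can be performed in the fixed order $x_n,x_{n-1},\ldots,x_{n-k+1}$. So your plan of redoing the induction with the prescribed coordinate order and tracking the side lengths $rt_i$ is the right framework.

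However, your treatment of the range $M<p\leq r$ contains a genuine error. You cover $rtB$ by $O\bigl(\prod_i(rt_i/p+1)\bigr)$ boxes of side $p$ and claim this is $O_\kappa(r^n/p^n)$ because $rt_i\geq\kappa$ and $\prod_i rt_i=r^n$; but the inference $rt_i/p+1=O_\kappa(rt_i/p)$ requires $rt_i\gtrsim p$, and for $i\leq n-k$ the hypotheses only give $rt_i\geq\kappa$, which can be far smaller than $p$. In the extreme case $rt_1=\cdots=rt_{n-k}=\kappa$, the covering count is of order $r^n/(\kappa^{n-k}p^k)$, and multiplying by $\#Y'(\F_p)=O(p^{n-k})$ yields only $O_\kappa(r^np^{n-2k})$, which does not sum over $M<p\leq r$ to $O\bigl(r^n/(M^{k-1}\log M)\bigr)$ (already for $k=2$, $n\geq4$ it is useless). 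The crude ``boxes of side $p$ times $\#Y'(\F_p)$'' bound is exactly where the short directions hurt, and the fix is that this range, too, must use the projection hypothesis and the elimination structure: the first $n-k$ coordinates are counted trivially, contributing $O_\kappa\bigl(\prod_{i\leq n-k}rt_i\bigr)$, and then each of the last $k$ coordinates is (away from the degenerate branches handled by the induction, i.e.\ vanishing resultants or leading coefficients mod $p$) confined to $O(1)$ residues mod $p$, contributing $O_\kappa(rt_i/p)$ since $t_i\geq\kappa$ and $p\leq r$ give $rt_i\geq\kappa p$; only after this fiber-wise count does $\prod_i rt_i=r^n$ collapse the product to $O_\kappa(r^n/p^k)$. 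A similar fiber-wise accounting (rather than the heuristic ``codimension deficit'' bookkeeping you sketch) is needed in the $p>r$ range, since individual $rt_i$ with $i>n-k$ may greatly exceed $r$; as written, your argument does not establish the bound (\ref{gseq2}).
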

To prove Theorem~\ref{gsthm2}, we note first that the analogue of
Lemma~\ref{first} holds equally well when $rB$ is replaced with $rtB$,
even without the condition that $t_i\geq \kappa$ for all $i>n-k$.
The proof is then identical to that of Theorem~\ref{gsthm}.

\subsection{Polynomials taking values that are multiples of squares of primes}\label{sqfreeapp}

Let $f$ be a polynomial with integer coefficients in the variables
$x_1,\ldots,x_n$.  To count squarefree values taken by $f$, we wish to
sieve out those points in $\Z^n$ where $f$ is a multiple of $p^2$ for some
prime $p$.  Now if $f(a)\equiv 0$ (mod $p^k$) for some
$k>1$ and $a\in \Z^n$, then this can happen in two distinct ways,
namely, we have either
\begin{equation}\label{strong}
f(a')\equiv 0 \!\!\!\!\pmod{p^k} \quad\!\! \forall a'\equiv a\!\!\!\! \pmod{p}
\end{equation}
or
\begin{equation}\label{weak}
\exists a'\equiv a\!\!\!\! \pmod{p} \mbox{ such that } f(a')\not\equiv 0 \!\!\!\!\pmod{p^k}.
\end{equation}
In the first case, we say that {\it $f$ is strongly a multiple of
  $p^k$ at $a$}, and otherwise we say that {\it $f$ is weakly a
  multiple of $p^k$ at $a$}.  In other words, (\ref{strong}) says that $f(a)$ is
a multiple of $p^k$ for ``mod $p$ reasons''; meanwhile, (\ref{weak}) says
that it is a multiple of $p^k$ for ``mod $p^j$ reasons'', where $j\in \{2,3,\ldots,k\}$ is the smallest integer such that $f(a')\equiv 0$ (mod $p^k$) for all $a'\equiv a$ (mod $p^j$).

These two scenarios are quite different, and it is natural to treat
them separately.  In the case of weak multiples, we will find that,
particularly in cases of high symmetry, one can sometimes prove the
necessary estimates by ring-theoretic methods, or by reducing weak
multiples to the case of strong multiples via more linear algebraic
methods (see \S4 and \S5).
Meanwhile, the study of strong multiples is amenable to 
geometric techniques.  For example, if 
$f$ is strongly a multiple of~$p^k$ at $a=(a_1,\ldots,a_n)\in\Z^n$, and not all the coefficients of $f$ vanish at $a$ (as a polynomial in $x_n$) modulo $p$, then $f(a_1,\ldots,a_{n-1},x_n)$ (mod $p$) must have a root of multiplicity $k$ at $x_n\equiv a_n$ (mod $p$).  
It follows that
if 
$Y_k$ denotes the closed subscheme of $\A^n_\Z$ defined by 
\begin{equation}\label{ykdef}
f=\frac{\partial f}{\partial x_n}=\cdots=\frac{\partial^{k-1}f}{{\partial x_n}^{k-1}}=0,
\end{equation}
then we have for all primes $p$ that
\[ \{a\in\Z^n \,|\, f \mbox{ is strongly a multiple of $p^k$ at $a$}\} \subseteq \{ a\in \Z^n\,|\, a\!\!\!\!\pmod{p} \in Y_k(\F_p)\}.\]
Theorem~\ref{gsthm} can thus be applied in order to estimate the
asymptotic number of points in $\Z^n$, in a homogeneously expanding region,
on which $f$ is strongly a multiple of $p^k$.

Generically, if the degree of $f$ is large enough, then the subscheme
$Y_k$ will have codimension~$k$.  In practice, this can be checked in
any given example; for our purposes, the following lemma will suffice:

\begin{lemma}\label{stronglemma}
Let $f$ be an irreducible integral polynomial in $n\geq 2$ variables.  Then there exists a subscheme $Y$ of $\A^n_\Z$ of codimension two such that, for all primes $p$, we have
\[ \{a\in\Z^n \,|\, f \mbox{ \rm{is strongly a multiple of $p^2$ at $a$}}\} \subseteq \{ a\in \Z^n\,|\, a\!\!\!\!\pmod{p} \in Y(\F_p)\}.\]
\end{lemma}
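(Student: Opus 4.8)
The plan is to take $Y = Y_2$ as defined in \eqref{ykdef}, i.e.\ the closed subscheme of $\A^n_\Z$ cut out by $f = \partial f/\partial x_n = 0$, and to show that it has codimension two. The containment of the set of points where $f$ is strongly a multiple of $p^2$ inside $Y(\F_p)$ has essentially already been observed in the discussion preceding the lemma: if $f$ is strongly a multiple of $p^2$ at $a$ and not all coefficients of $f(a_1,\ldots,a_{n-1},x_n)$ vanish mod $p$, then this one-variable polynomial has a double root at $x_n \equiv a_n$, forcing $f(a) \equiv \partial f/\partial x_n(a) \equiv 0 \pmod p$; and if all those coefficients do vanish mod $p$, then in particular both $f(a)$ and $\partial f/\partial x_n(a)$ vanish mod $p$ as well. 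So the only real content is the codimension statement.

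First I would reduce to showing that $Y$ is a proper closed subscheme of each irreducible component of the hypersurface $\{f = 0\}$; since $\{f=0\}$ is irreducible of codimension one (as $f$ is irreducible), it suffices to show that $\partial f/\partial x_n$ does not vanish identically on $\{f = 0\}$, equivalently that $f \nmid \partial f/\partial x_n$ in $\Z[x_1,\ldots,x_n]$ — or rather, after base change, that $f$ and $\partial f/\partial x_n$ have no common factor over $\bar\Q$. Because $\deg_{x_n}(\partial f/\partial x_n) < \deg_{x_n} f$, the only way $f$ could divide $\partial f/\partial x_n$ is if $\partial f/\partial x_n = 0$ identically, i.e.\ if $f$ does not involve $x_n$ at all (in characteristic zero). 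This is the one genuine case to dispose of: it may happen that $f$, though irreducible in $n$ variables, honestly only involves $x_1,\ldots,x_{n-1}$.

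The fix for that degenerate case is simply to choose the variable more cleverly: since $f$ is a nonconstant polynomial, it must involve at least one of the variables $x_1,\ldots,x_n$, say $x_i$, and then one runs the same argument with $x_i$ in place of $x_n$, defining $Y$ by $f = \partial f/\partial x_i = 0$. With that choice $\partial f/\partial x_i \neq 0$, it has strictly smaller degree in $x_i$ than $f$, hence shares no factor with the irreducible polynomial $f$ over $\bar\Q$, so $Y$ is cut out by two polynomials with no common component and thus has codimension two (one should also remark that $Y$ is nonempty, or note that codimension two allows the empty scheme, so there is nothing to check there). The containment statement goes through verbatim with $x_i$ replacing $x_n$ in the preceding paragraph's discussion. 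The main obstacle, then, is really just bookkeeping: making sure the choice of distinguished variable is made to guarantee $\partial f/\partial x_i \not\equiv 0$, and being careful that "codimension two" is being asserted correctly even when $Y$ could in principle be empty or fail to have pure codimension (in which case one replaces $Y$ by a codimension-two subscheme containing it, exactly as in the opening maneuver of the proof of Lemma~\ref{first}).
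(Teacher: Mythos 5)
Your proof is correct and follows essentially the same route as the paper: take $Y=Y_2$ cut out by $f=\partial f/\partial x_n=0$ (after relabeling so that $f$ genuinely involves the distinguished variable, which is exactly the paper's ``without loss of generality'' step), and use irreducibility of $f$ together with $\deg_{x_n}(\partial f/\partial x_n)<\deg_{x_n}f$ to rule out a common factor and get codimension two. Your explicit handling of the case where all coefficients of $f(a_1,\ldots,a_{n-1},x_n)$ vanish mod $p$ is a fine, slightly more careful rendering of the containment the paper asserts.
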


\begin{proof}
Without loss of generality, we may assume that $f$ is nonconstant as a polynomial in $x_n$.  We let $Y=Y_k$ as defined in (\ref{ykdef}), with $k=2$.  Then since $f$ is irreducible, 
$f$ and $\partial f/\partial x_n$ do not share a common factor, and so they cut out a subscheme in $\A^n_\Z$ of codimension two.  It follows that $Y=Y_k$ as defined in (\ref{ykdef}), with $k=2$, has codimension two in $\A^n_\Z$, as desired.
\end{proof}

Thus, to sieve out lattice points in homogeneously expanding regions
in $\R^n$ where a multivariate irreducible polynomial is a multiple of
$p^2$, one may first use the estimates of \S3.2 to handle the strong
multiples of $p^2$.  It remains to handle the weak multiples; the key
idea then is to utilize extra structure on the polynomials to reduce
weak multiples to strong multiples via appropriate {\em rational} changes of
variable!

\subsection{Proof of Theorem~\ref{maingeneral}}

In this subsection, we prove Theorem~\ref{maingeneral},
i.e., if $f$ is a polynomial that satisfies 
the axioms of Section~2,
then $f$ takes the expected density of squarefree values. 

To this end, 
suppose that $f$ (with given $G$ and $V$) satisfies the set of axioms
of Section~2. We begin by proving Equation (\ref{ld2}) of Remark~\ref{fxversion} for $f$. Let $\FF_1$ and $\FF_X=X^{1/d}\FF_1$ be as in Condition~4. Then Condition 5, in the special case $S=V(\Z)$, states that
\begin{equation}\label{vol}
\left|\FF_X\cap V(\Z)^\gen\right|=\Vol(\FF_X)+o(X^{m/d}).
\end{equation}
For any small $\epsilon>0$, let $\FF_1^{1-\epsilon}$ denote a compact measurable subset of $\FF_1$ such that 
\[ \Vol(\FF_1^{1-\epsilon})=(1-\epsilon)\Vol(\FF_1).\]
(That is, $\FF_1^{1-\epsilon}$ is obtained from $\FF_1$ by cutting off the cusps of $\FF_1$ sufficiently far out.)  
Let $\FF_X^{1-\epsilon}=X^{1/d}\cdot\FF_1^{1-\epsilon}$, so that $$\Vol(\FF_X^{1-\epsilon})=(1-\epsilon)\Vol(\FF_X).$$  Then 
\begin{equation}\label{vol2} \left|\FF_X^{1-\epsilon}\cap
  V(\Z)^\gen\right|=\Vol(\FF_X^{1-\epsilon}) + o(X^{m/d}) =
(1-\epsilon)\Vol(\FF_1)\cdot X^{m/d} + o(X^{m/d}),\end{equation} since we are simply counting
lattice points in a bounded homogeneously expanding region, and then
subtracting away the count of non-generic points which have density zero by Condition~1.
Similarly, for
a set $\mathcal S\subset V(\Z)$ defined by finitely many congruence
conditions, we have
\begin{equation}\label{ramanujan}
 \left|\FF_X^{1-\epsilon}\cap \mathcal S^\gen\right|=
(1-\epsilon)\Vol(\FF_1)\prod_p\mu_p(\mathcal S)\cdot X^{m/d} + o(X^{m/d}),
 \end{equation}
where $\mathcal S^\gen$ denotes the subset of generic points in $\mathcal S$.  Note that, by (\ref{vol}) and (\ref{vol2}), we have
\begin{equation}\label{esmall}
\left|(\FF_X\setminus\FF_X^{1-\epsilon})\cap \mathcal S^\gen\right| \leq \epsilon\cdot\Vol(\FF_1)\cdot X^{m/d} + o(X^{m/d}).
\end{equation}

Now, for each prime $p$, let $S_p$ be a subset of
 $V(\Z)$ 
 defined by finitely many congruence conditions such that for sufficiently large $p$, the set $S_p$ contains all
 elements $v\in V(\Z)$ such that $p^2\nmid f(v)$. Let $S=\cap_p
 S_p$.  Then, to prove (\ref{ld2}) for $f$, it suffices  to determine, asymptotically, the cardinality of $\FF_X\cap S$; indeed, the special case where $S_p$ is exactly the set of elements $v\in V(\Z)$ such that $p^2\nmid f(v)$ will correspond to (\ref{ld2}).
 
Let $M$ be any positive integer.  It follows from Condition~5 that
\begin{equation}\label{finapprox}
\lim_{X\rightarrow\infty} \frac{\left|\FF_X
\cap(\cap_{p\leq M}S_p^\gen)\right|}{X^{m/d}}
= 
\Vol(\FF_1)\prod_{p\leq M}\mu_p(S).
\end{equation}
Letting $M$ tend to $\infty$, we conclude that
\begin{equation}\label{upbound}
\displaystyle{\limsup_{X\rightarrow\infty} \frac{\left|\FF_X
\cap S^\gen\right|}{X^{m/d}}
  \leq  
  \Vol(\FF_1)\prod_{p}\mu_p(S).}
\end{equation}
To obtain a lower bound for $\left|\FF_X
\cap S^\gen \right|$, we note that
\begin{equation}\label{inc}
\bigcap_{p\leq M} S_p \subset 
(S \cup \bigcup_{p>M} W_p),
\end{equation}
where $W_p$ denotes the set of points in $V(\Z)$ having discriminant a multiple of $p^2$.  
We use
 the geometric sieve estimates of the previous section to estimate the size of 
$\FF_X
\cap(\cup_{p> M}W_p^\gen)$.  More precisely, we prove:

\begin{lemma}\label{westimate}
We have $$\left|\FF_X
\cap(\cup_{p> M}W_p^\gen)\right| \,=\, O_\epsilon\bigl(X^{\textstyle{\frac m
  d}}/(M^{\min\{\eta,1\}}\log
M)+X^{\textstyle\frac {m-1} d}\bigr)
+O\bigl(\epsilon X^{\textstyle \frac{m}{d}}\bigr),$$
where the implied constants
are independent of 
 $M$.
\end{lemma}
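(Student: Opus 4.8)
The plan is to partition $\cup_{p>M} W_p^\gen$ according to whether the prime $p$ witnesses a \emph{strong} or a \emph{weak} multiple of $p^2$, using the dichotomy set up in \S\ref{sqfreeapp}. Write $W_p^\gen = W_p^{(1),\gen} \cup W_p^{(2),\gen}$, where $W_p^{(1)}$ consists of $v$ at which $f$ is strongly a multiple of $p^2$ and $W_p^{(2)}$ of those at which $f$ is weakly a multiple of $p^2$. For the strong part, since $f$ is irreducible (Theorem~\ref{fthm}, Theorem~\ref{gthm}), Lemma~\ref{stronglemma} furnishes a codimension-two subscheme $Y \subset \A^m_\Z$ whose $\F_p$-points contain the reductions mod $p$ of all points at which $f$ is strongly a multiple of $p^2$. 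Applying Theorem~\ref{gsthm} (or Theorem~\ref{gsthm2}, handling the fundamental domain $\FF_X = X^{1/d}\FF_1$ after covering $\FF_1$ by a bounded region possibly pushed out by diagonal torus elements in the cusp) with $n=m$ and $k=2$ gives
\[
\Bigl|\FF_X \cap \bigl(\textstyle\bigcup_{p>M} W_p^{(1),\gen}\bigr)\Bigr| \,=\, O\Bigl(\frac{X^{m/d}}{M\log M} + X^{(m-1)/d}\Bigr),
\]
which is absorbed into the claimed bound since $\min\{\eta,1\}\le 1$.

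The substantive part is the weak multiples $W_p^{(2),\gen}$, and this is where Condition~6 enters. Fix $p > M$ and let $v\in W_p^{(2),\gen}$. By Condition~6 there are $a=a_v \ge 0$, an absolutely bounded $k=k_v\ge 0$, and $g=g_v\in G(\Q)$ with: (i) $|I(g v)| = p^{-a}|I(v)|$; (ii) $g v \in V(\Z)^\gen$ reduces mod $p$ into a closed $G$-invariant subscheme $Y_k$ of codimension $\ge k$ defined over $\Z$ depending only on $k$; (iii) each point of $Y_k(\Z)$ arises in this way at most $c$ times up to $G(\Z)$-equivalence; and (iv) $\frac md a + k - 1 \ge \eta > 0$. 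The plan is to count the images $gv$ rather than the points $v$ themselves. For fixed $a$ and $k$, the images $gv$ lie in $V(\Z)^\gen$, have $|I(gv)| < p^{-a}X$ (so they lie in $\FF'$-translates covering $\FF_{p^{-a}X}$ up to $G(\Z)$-action — here one uses Condition~4's homogeneous expansion together with the fact that $G^1(\Z)$ acts on each $I$-level set), and reduce mod $p$ into $Y_k(\F_p)$. Counting such points: by the geometry-of-numbers estimates cited in Condition~5 and in \cite{Davenport2,dodqf,dodpf,BS}, the number of $G(\Z)$-classes of generic elements of $V(\Z)$ with $|I| < Y$ is $O(Y^{m/d})$; intersecting with the congruence condition ``reduces mod $p$ into $Y_k$'' — a subscheme of codimension $\ge k$ — should cut this down by a factor $p^{-k}$ (after covering the relevant region by $O((Y^{1/d}/p)^m)$ boxes of side $p$ when $p \le Y^{1/d}$, exactly as in the proof of Theorem~\ref{gsthm}, or by a Lemma~\ref{first}-type bound when $p > Y^{1/d}$). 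By (iii) each class is hit $O(1)$ times, so
\[
\Bigl|\FF_X \cap W_p^{(2),\gen} \text{ with parameters } (a,k)\Bigr| \,=\, O\!\left(\frac{(p^{-a}X)^{m/d}}{p^{k}} + (p^{-a}X)^{(m-1)/d}\right) \,=\, O\!\left(\frac{X^{m/d}}{p^{\frac md a + k}} + X^{(m-1)/d}\right).
\]
Summing over primes $p > M$ and over the absolutely bounded set of values of $k$ (and the discretely many relevant values of $a$), and using (iv) in the form $\frac md a + k \ge \eta + 1 > 1$, the first term contributes $O\bigl(\sum_{p>M} X^{m/d} p^{-(1+\eta)}\bigr) = O\bigl(X^{m/d}/(M^{\eta}\log M)\bigr)$ by partial summation, while the second term needs more care: the number of primes $p$ for which $W_p^{(2),\gen}$ is nonempty at a given $v$ is bounded because $f(v) = O(X^{d/d}) = O(X)$ has $O(1)$ prime factors exceeding $X^{1/d}$ — wait, this only controls large primes; for $M < p \le X^{1/d}$ the $X^{(m-1)/d}$ term must instead be controlled by noting $\sum_{M<p\le X^{1/d}} X^{(m-1)/d} \ll X^{(m-1)/d} \cdot X^{1/d}/\log X = X^{m/d}/\log X$, which is too weak — so in fact one cannot afford a per-prime $X^{(m-1)/d}$ here. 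The fix is to not bound each prime separately but to run the geometry-of-numbers count \emph{once} over all $p>M$ simultaneously, exactly mirroring the structure of the proof of Theorem~\ref{gsthm}: split into $M < p \le X^{1/d}$ (box-covering argument, giving $\sum_p X^{m/d} p^{-(1+\eta)} = O(X^{m/d}/(M^\eta \log M))$) and $p > X^{1/d}$ (where $a=0$ forces $k \ge 1+\eta$, so $Y_k$ has codimension $\ge 2$, and a Lemma~\ref{first}/Theorem~\ref{gsthm}-style bound gives $O(X^{(m-1)/d})$ total).

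The main obstacle is precisely this bookkeeping: making the geometry-of-numbers uniformity estimate interact correctly with the rational translation $g_v$, so that counting the \emph{translated} points $g_v v$ — which live in a \emph{smaller} region $\FF_{p^{-a}X}$ but satisfy a codimension-$k$ congruence at $p$ — legitimately bounds the count of the original points. Two points require care. First, one must verify that distinct $v$ (up to $G(\Z)$) give images $g_v v$ that, up to $G(\Z)$, remain distinct — this is exactly what Condition~6(iii) guarantees, with multiplicity $\le c$. Second, the geometry-of-numbers bounds from \cite{Davenport2,dodqf,dodpf,BS} must be available in the sharp ``power-saving with a congruence condition'' form used in the proof of Theorem~\ref{gsthm}, i.e.\ $O(Y^{m/d}/p^k + Y^{(m-1)/d})$ rather than merely $O(Y^{m/d})$; this is where the covering-by-$p$-boxes device and the codimension hypothesis on $Y_k$ from Condition~6(ii) are indispensable. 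Once these are in place, summing over $p$ and $(a,k)$ using Condition~6(iv) produces the bound $O_\epsilon\bigl(X^{m/d}/(M^{\min\{\eta,1\}}\log M) + X^{(m-1)/d}\bigr)$, with the stray $O(\epsilon X^{m/d})$ arising from the (harmless) replacement of $\FF_X$ by a compact truncation $\FF_X^{1-\epsilon}$ needed to apply the compact-region hypotheses of Theorems~\ref{gsthm}--\ref{gsthm2}, together with the estimate \eqref{esmall} for the discarded cuspidal sliver. All implied constants depend only on $B$, $Y$, the representation data, and $\epsilon$, but not on $M$, as required.
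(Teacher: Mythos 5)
Your treatment of the strong multiples (Lemma~\ref{stronglemma} plus Theorem~\ref{gsthm} applied to the truncation $\FF_X^{1-\epsilon}$, with (\ref{esmall}) absorbing the cusp) is the paper's, and your overall architecture for the weak multiples --- transport $v$ by $g_v$ into the smaller region $\FF_{X/p^{a}}$ carrying a mod~$p$ condition into $Y_k$, count there, and split the primes into a small and a large range --- is also the paper's. The genuine problems are in the ``fix'' you substitute for the per-prime summation. First, the range split: you run the small-prime regime up to $X^{1/d}$ and handle it by a box-covering/equidistribution argument for the weak mod-$p^2$ condition; but that argument needs boxes of side $p^2$ inside the region, hence $p\le X^{1/(2d)}$ (this is exactly why the paper's estimate (\ref{indestwp22}) stops at $X^{1/(2d)}$). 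For $X^{1/(2d)}<p\le X^{1/d}$ the solution set of the weak condition is a union of cosets of lattices of covolume $p^2$ exceeding the linear scale of $\FF_X$, and an honest count picks up an extra $O(X^{(m-1)/d})$ per prime --- precisely the term you correctly observed one cannot afford; moreover the exponent $p^{-(1+\eta)}$ you attribute to box-covering has no source there (the density argument gives $p^{-2}$; the exponent $\eta$ can only enter through Condition~6(iv)). So the intermediate range is left with no valid argument.

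Second, and more seriously, your large-prime branch assumes $a=0$, ``forcing'' $k\ge 1+\eta$ and codimension $\ge2$ so that Theorem~\ref{gsthm} applies. Condition~6 gives no such thing: it supplies some $a\ge0$, $k\ge0$ with $\frac md a+k-1\ge\eta$, and in the paper's principal applications $f_3$, $f_4$, $f_5$ one has $a=2$ and $k=0$, so $Y_k$ has codimension $0$ and no closed-point-sieve saving exists; there the entire saving must come from the shrinkage of the region: the images lie in $\FF_{X/p^{a}}$, Condition~5 bounds their number by $O((X/p^{a})^{m/d})$ with no secondary term, and 6(iv) makes the exponent of $p$ at least $1+\eta$, so the sum over $p>\max\{M,X^{1/(2d)}\}$ converges (this is (\ref{wp2}); restricting to $p>X^{1/(2d)}$ is also what controls, via the at most $d$ such prime divisors of $f(v)$, the multiplicity bookkeeping between $v$ and its image). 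Your abandoned per-prime display actually contains this mechanism in its first term; the correct organization, as in (\ref{wp2})--(\ref{wp22}), is to keep the per-prime Condition~5 count for $k=0$, and to invoke Theorem~\ref{gsthm} once, over all $p>\max\{M,X^{1/(2d)}\}$ simultaneously and in the shrunken domain $\FF_{X/M'^{\,\alpha}}$, only for the parts with $k\ge1$, where 6(ii) supplies the codimension. As written, your argument fails for $a>0$ with $k\in\{0,1\}$ at large primes and for all weak multiples with $X^{1/(2d)}<p\le X^{1/d}$, so the lemma is not yet proved.
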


\begin{proof}
  We write $W_p^\gen = W_p^{(1)}\cup W_p^{(2)}$, where $W_p^{(1)}$ denotes
  the set of points where the discriminant is strongly a multiple of
  $p^2$, and $W_p^{(2)}$ denotes the set of points where the
  discriminant is weakly a multiple of $p^2$.  
  
By Lemma~\ref{stronglemma}, there
exists an arithmetic subscheme $Y$ of
$\A^{m}_\Z=V_\Z$ of codimension $\geq2$ such that $x\in W_p^{(1)}$ implies that $x$
(mod $p$) is a point on $Y(\F_p)$.  Since $\FF_X^{1-\epsilon}$ is a bounded and 
homogeneously expanding region, by (\ref{esmall}) and Theorem~\ref{gsthm} we
conclude that
\begin{equation}\label{w2est}
\begin{array}{rcl}
\left|\FF_X\cap(\cup_{p> M}W_p^{(1)})\right|&=&\left|\FF_X^{1-\epsilon}\cap(\cup_{p> M}W_p^{(1)})\right| +O(\epsilon X^{m/d})\\[.1in] &=& O_\epsilon(X^{m/d}/(M\log M)+X^{(m-1)/d})+O(\epsilon X^{m/d}).
\end{array}
\end{equation}
In particular, any $v\in W_p^{(1)}$ satisfies Condition~6 with $g=1$,
$a=0$, and $k=2$. 

To handle $W_p^{(2)}$ for primes $p$ with $M<p\leq X^{1/(2d)}$, we
may use the same argument used to prove (\ref{indest}) to obtain
\begin{equation}\label{indestwp22}
\#\{\,(v,p)\,\,|\,\,v\in \FF_X^{1-\epsilon}\cap W_p^{(2)}, \; M<p\leq X^{1/(2d)}\} \,=\, 
\sum_{M<p\leq X^{1/(2d)}} O\Bigl(\frac{X^{m/d}}{p^2}\Bigr) = O\Bigl(\frac{X^{m/d}}{M\log M}\Bigr).
\end{equation}
For primes $p>X^{1/(2d)}$, we use Condition 6.   
Let us write $W_p^{(2)}=\cup_{k\geq 0}W_p^{(2)}(k)$, where
$W_p^{(2)}(k)$ is the portion of $W_p$ having given value of $k$ in
Condition~6(ii).  For this fixed $k$, let $\alpha$ be the infimum of $a$ over all $v\in
W_p^{(2)}(k)$. Then we have
\begin{equation}\label{wp2}
N(W_p^{(2)}(k);X)=O( N(V(\Z);X/p^\alpha))=O((X/p^\alpha)^{m/d}),
\end{equation}
where the first equality follows from Conditions 6(i) and 6(iii), and
the fact that $f(v)$ (for $v\in \FF_X\cap V(\Z)$) has at most $d$ prime
factors $p$ greater than $X^{1/(2d)}$ such that $p^2\mid f(v)$; and 
the second equality follows
from Condition~5.  By summing over $p>M'=\max\{M,X^{1/(2d)}\}$, 
this is sufficient to obtain the estimate of
Lemma~\ref{westimate} in cases where $k=0$.  
If $k\geq 1$, then
we may strengthen (\ref{wp2}), when counting in the union of the
$W_p^{(2)}(k)$ over all $p>M'$, using Condition~6(ii), Estimate
(\ref{esmall}), and Theorem~\ref{gsthm}:
\begin{equation}\label{wp22}
\begin{array}{rcl}
\!\!\!N(\cup_{p> M'} W_p^{(2)}(k);X)&\!\!\!\!=\!\!\!\!&O\bigl( \bigl|\{v\in V(\Z):v\in \FF_{X/p^\alpha} \mbox{ and $v$ (mod $p)\in Y_k(\F_p)$ for some $p> M'$}\}\bigr| \bigr)\\[.025in]
&\!\!\!\!=\!\!\!\!&O\bigl( \bigl|\{v\in\FF_{X/M'^\alpha}^{1-\epsilon}\cap
V(\Z):\mbox{$v$ (mod $p)\in Y_k(\F_p)$ for some $p> M'$}\}\bigr| 
\!+\!\epsilon (X/M'^{\,\alpha})^{\textstyle\frac m d}
\bigr)\\[.05in]
&\!\!\!\!=\!\!\!\!& O_\epsilon\bigl((X/M'^{\,\alpha})^{\textstyle\frac m d}/(M'^{\,k-1}\log
M')
+(X/M'^{\,\alpha})^{{\textstyle\frac{m-k+1}{d}}}\bigr)  + O\bigl(\epsilon (X/M'^{\,\alpha})^{\textstyle\frac m d}\bigr).
\end{array}
\end{equation}
Combining (\ref{w2est}), (\ref{indestwp22}), and (\ref{wp22}), we obtain Lemma~\ref{westimate}.
\end{proof}

By (\ref{finapprox}), (\ref{inc}), and Lemma~\ref{westimate}, we see that
\begin{equation}\label{infinapprox}
\liminf_{X\rightarrow\infty} \frac{\left|\FF_X
\cap S^\gen\right|}{X^{m/d}}
\geq 
\Vol(\FF_1)\prod_{p\leq M}\mu_p(S) - O_\epsilon(1/M^{\min\{\eta,1\}}) - O(\epsilon).
\end{equation}
Letting $M$ tend to infinity, and combining with Condition 6(iv) that $\eta>0$, gives
\begin{equation}\label{approx2}
\left|\FF_X\cap S^\gen\right|
\geq 
\Vol(\FF_1)\prod_{p}\mu_p(S)\cdot X^{m/d} -O(\epsilon)\cdot X^{m/d}.
\end{equation}
Finally, letting $\epsilon$ tend to 0, and combining with (\ref{upbound}), yields
\begin{equation}\label{last}
\left|\FF_X\cap S^\gen\right| = \Vol(\FF_1)\prod_p\mu_p(S)\cdot X^{m/d} + o(X^{m/d}).
\end{equation}
This proves (\ref{ld2}) of Remark~\ref{fxversion} under the assumption
that $f$ satisfies the axioms of Section~2.

To prove also (\ref{localdensity}) for $f$ (i.e.,
Theorem~\ref{maingeneral}), 
let $B_N = [-N,N]^m \subset V(\R)$, and for each prime~$p$, let~$c_p$
denote the number of elements $x\in(\Z/p^2\Z)^m$ satisfying $f(x)=0$
in $\Z/p^2\Z$.  Given any positive integer $M$, let $S_M\subset\Z$
denote the set of all integers that are not multiples of $p^2$ for any
prime $p\leq M$.  Then it is clear that
\begin{equation}\label{finiteprod}
\lim_{N\to\infty}\frac{\#\{x\in\Z^m\cap B_N : f(x)\in S_M\}}
{(2N+1)^m} = \prod_{p\leq M}(1-c_p/p^2),
\end{equation}
since the set of points being counted is a union of finitely many
translates of lattices, all defined by congruence conditions modulo a
single fixed modulus (namely, $\prod_{p\leq M}p^2$).  Letting $M$ tend
to infinity, we see that
\begin{equation}\label{lowerprod}
\limsup_{N\to\infty}\frac{\#\{x\in\Z^m\cap B_N : f(x) \mbox{ squarefree}\}}
{(2N+1)^m} \leq  \prod_p (1-c_p/p^2).
\end{equation}
(This upper bound indeed holds for any polynomial, and we have not yet
used any special property of $f(x)$.)

To obtain a lower bound, we note that
by Condition~1, 
the density of non-generic points in
$B_N$ approaches~0 as $N\to\infty$, and hence such non-generic points 
may be ignored for the purposes of proving Theorem~\ref{maingeneral}.  We now treat
separately the generic points on which $f$ is strongly a multiple
of $p^2$ and on which $f$ is weakly a multiple of $p^2$.  In the case
of the set $B_N\cap W_p^{(1)}$ of points in $B_N$ where $f$ is
strongly a multiple of $p^2$, we immediately have by Lemma~\ref{stronglemma} and
Theorem~\ref{gsthm} that 
\begin{equation}\label{one}
\left|B_{N}\cap(\cup_{p\geq M}W_p^{(1)})\right|
=O(N^m/(M\log M) + N^{m-1})\end{equation} 
for any $M>0$.

In order to obtain an analogous estimate for the set $B_N\cap
W_p^{(2)}$ of points in $B_N$ where $f$ is weakly a multiple of $p^2$,
our strategy is to cover a certain large portion of $B_N$ by
fundamental domains for the action of $G(\Z)$ on $V(\R)$, and then
apply (\ref{wp22}) to each such fundamental domain.  To carry out
this plan, we note that $B_N$ may be covered by a countable union $\cup_{i=1}^\infty\gamma_i\FF_X$ ($\gamma_i\in G(\Z)$) of translates of $\FF_X$, where $X$ is sufficiently large so that $|I(v)|<X$ for all $v\in B_N$; 
since $I$ has degree $d$, we may take $X=cN^d$ for some fixed constant
$c>0$.

Let $B_{N,s}=B_N\cap(\cup_{i=1}^s \gamma_i \FF_X)$.  
Since we have
the estimate of Lemma~\ref{westimate} 
for any $G(\Z)$-translate of $\FF_X$,
and $B_{N,s}$ is a union of $s$ translates of
$\FF_X$, we conclude that
\begin{equation}\label{two}
\begin{array}{rcl}
\left|B_{N,s}\cap(\cup_{p>M}W_p^{(2)})\right|&=&
sO_\epsilon(X^{\textstyle\frac m d}/(M^{\min\{\eta,1\}}\log M)+ X^{\textstyle\frac{m-1}d})
+ sO(\epsilon X^{\textstyle\frac m d})\\[.1in]
&=&sO_\epsilon(\:N^m/(M^{\min\{\eta,1\}}\log
M)+\,N^{m-1})  + sO(\epsilon N^m)
\end{array}
\end{equation} 
where the implied constant is independent
of $s$, $M$, and $N$.  

It follows from (\ref{one}) and (\ref{two})  that
\begin{equation}\label{meps}
\liminf_{N\rightarrow\infty} \frac{\#\{x\in\Z^m\cap
  B_{N,s}:f(x)\mbox{ squarefree}\}}{\Vol(B_{N,s})}
\geq \prod_{p\leq M}(1-c_p/p^2) - sO_\epsilon(1/M^{\min\{\eta,1\}}) - sO(\epsilon).
\end{equation}
Evidently, $\Vol(B_{N,s})$ approaches $\Vol(B_N)=(2N)^m$
from below as $s\to\infty$.  Letting $M$ tend to infinity, and then $\epsilon$ to 0, and finally 
$s$ to $\infty$ in (\ref{meps}) now yields the desired result
\begin{equation*}
\lim_{N\rightarrow\infty} \frac{\#\{x\in\Z^m\cap
  B_{N}:f(x)\mbox{ squarefree}\}}{(2N+1)^m}
= \prod_{p}(1-c_p/p^2),
\end{equation*}
since $\Vol(B_N)/(2N+1)^m\to1$ as $N\to\infty$.

\section{The density of squarefree values taken by $f_3$, $f_4$, and
  $f_5$ (Proofs  of Theorems \ref{sqfdisc}--\ref{unrexts})}

In this section, we show that the polynomials $f_3$, $f_4$, and $f_5$
as in the introduction satisfy all the axioms of Section~2.  It will
therefore 
follow by Theorem~\ref{maingeneral} that these polynomials take the
expected density of squarefree values.  
We will also then deduce 
Theorem~\ref{sqfdisc}, Corollary~\ref{sqfreedisccor},
Theorem~\ref{gensqfree}, 
Theorem~\ref{unrexts}, and Theorem~\ref{fthm}.

We begin by describing the polynomials $f_3$, $f_4$, and
$f_5$ in more detail, and their interpretations in terms of cubic,
quartic, and quintic rings. 

\subsection{The parametrization of rings of small rank by prehomogeneous vector spaces}\label{par}

Let $n=3$, 4, or 5.  For any ring $T$ (commutative, with unit), let $V(T)$ be:
\begin{itemize}
\item[(a)] the space $\Sym_3T^2$ of binary cubic forms with coefficients in $T$, if $n=3$; 
\item[(b)] the space $T^2\otimes \Sym_2T^3$ of pairs of ternary quadratic forms with coefficients in $T$, if $n=4$; or 
\item[(c)] the space $T^4\otimes \wedge^2 T^5$ of quadruples of $5\times 5$ skew-symmetric matrices with entries in $T$, if~$n=5$.  
\end{itemize}
Then the group $G(T)$ naturally acts on $V(T)$, where we set $G(T)=\GL_2(T)$,
$\GL_2(T)\times \SL_3(T)$, or $\GL_4(T)\times \SL_5(T)$ in accordance
with whether $n=3$, 4, or 5, respectively. In the case of $n=3$, we
use the ``twisted action'', i.e., 
an element $\gamma\in\GL_2$ acts on a binary cubic form $x(s,t)\in V$ by $$\gamma\cdot x(s,t)=(\det \gamma)^{-1}x((s,t)\cdot\gamma).$$

For each $n=3$, 4, or 5, there is a natural
invariant polynomial $f_n$ for the action of $G(\Z)$ on $V(\Z)$, called the
{\it discriminant}, which in fact generates the ring of polynomial
invariants.  This discriminant polynomial has degree $4$, $12$, or
$40$ on $V(\Z)$, in accordance with whether $n=3$, $4$, or~$5$.  We say
that an orbit of $G(T)$ on $V(T)$ is {\it nondegenerate} if the
discriminant of any element in that orbit is nonzero.

The nondegenerate orbits of $G(T)$ on $V(T)$ in the case of fields $T$
were classified by Wright and Yukie~\cite{WY}, and were shown to be in
natural correspondence with \'etale degree $n$ extensions of~$T$.  The
orbits of $G(\Z)$ on $V(\Z)$ were classified in \cite{GGS}, \cite{hcl3}, and
\cite{hcl4}, where the following theorem was proved:

\begin{theorem}\label{bij}
  The nondegenerate $G(\Z)$-orbits on $V(\Z)$ are in canonical bijection
  with isomorphism classes of pairs $(R,R')$, where $R$ is a ring of
  rank $n$ and $R'$ is a resolvent ring of $R$.  In this bijection,
  the discriminant of an element $v\in V(\Z)$ equals the discriminant
  of the corresponding ring $R$ of rank~$n$.  Furthermore, every
  isomorphism class of ring $R$ of rank~$n$ occurs in this bijection,
  and every isomorphism class of maximal ring occurs exactly once.
\end{theorem}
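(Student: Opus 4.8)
The statement is really the union of three theorems, one for each $n\in\{3,4,5\}$, proved respectively in \cite{GGS}, \cite{hcl3}, and \cite{hcl4}; each is established by exhibiting an explicit, mutually inverse pair of constructions, and the plan is to carry these out and check they do what is claimed. First one builds the \emph{forward map} $v\mapsto(R_v,R'_v)$: to an element $v\in V(\Z)$ one attaches a rank-$n$ ring $R_v$ and a resolvent ring $R'_v$ by writing down multiplication tables whose structure constants are fixed polynomials in the coordinates of $v$. For $n=3$ this is just the Delone--Faddeev multiplication on $\Z\langle 1,\omega,\theta\rangle$ (with $R'$ carrying no extra data, so that the correspondence is the classical bijection between $\GL_2(\Z)$-classes of binary cubic forms and cubic rings); for $n=4$ one builds a quartic ring on $\Z^4$ together with a cubic resolvent on $\Z^3$ from the pair of ternary quadratic forms; for $n=5$ one builds a quintic ring on $\Z^5$ together with a sextic resolvent on $\Z^6$ from the quadruple of skew-symmetric $5\times5$ matrices. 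In each case verifying the ring axioms (essentially associativity) amounts to a finite --- for $n=5$, lengthy but mechanical --- polynomial identity. One then checks $G(\Z)$-equivariance: the two factors of $G$ act by change of basis, modulo the span of the identity, on $R_v$ and on $R'_v$ respectively, and the twisted action for $n=3$ is arranged exactly so that $1\in R_v$ is preserved; thus the forward map descends to $G(\Z)$-orbits.

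Next one builds the \emph{inverse map}. Given a pair $(R,R')$, choose a $\Z$-basis of $R$ of the form $(1,\alpha_1,\dots,\alpha_{n-1})$ normalized in a standard way (for instance, so that prescribed traces or products lie in $\Z\cdot1$), and likewise a normalized basis of $R'$; the structure constants of these presentations assemble into an element of $V(\Z)$, and the residual ambiguity in the choice of bases is precisely the action of $G(\Z)$ already described (the $\SL$ factor arising because the determinants of the two natural change-of-basis groups are linked). Comparing the two constructions termwise shows they are mutually inverse, which gives the claimed canonical bijection on nondegenerate orbits. The discriminant identity $f_n(v)=\disc(R_v)$ is then a one-line polynomial comparison: the determinant of the trace pairing of $R_v$ is a polynomial of degree $d$ in the coordinates of $v$ that is invariant under $G(\Z)$, hence (as $G(\Z)$ is Zariski dense in $G$) under all of $G$; since $f_n$ generates the ring of invariants and is itself of degree $d$, one gets $\disc(R_v)=c\,f_n(v)$ for a scalar $c$, and $c=1$ is pinned down by evaluating at any single nondegenerate point --- say one corresponding to a maximal order in an \'etale $\Q$-algebra of degree $n$, where both sides are the usual product of squared differences of the $n$ conjugates.

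Finally one addresses surjectivity and multiplicity. For surjectivity one shows that \emph{every} rank-$n$ ring $R$ admits at least one resolvent ring --- for $n=4$ via the resolvent-cubic construction promoted to the level of rings, for $n=5$ via the sextic resolvent built from the exceptional inclusion $S_5\hookrightarrow S_6$ --- so that some pair $(R,R')$, hence $R$ itself, occurs in the image. For the multiplicity statement one shows that when $R$ is maximal (a maximal order in an \'etale $\Q$-algebra of degree $n$) its resolvent ring is \emph{unique} up to isomorphism: a maximal quartic ring forces its cubic resolvent to be the corresponding maximal cubic ring, and a maximal quintic ring forces its sextic resolvent to be the maximal sextic ring attached to it through the outer automorphism of $S_6$. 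Since the bijection is with \emph{pairs} $(R,R')$, uniqueness of $R'$ for maximal $R$ yields that each maximal ring occurs exactly once. The main obstacle throughout is exactly this resolvent-ring machinery in the cases $n=4$ and especially $n=5$: defining the sextic resolvent integrally, proving it well defined and functorial, and establishing its uniqueness for maximal quintic rings (which is where the exceptional combinatorics of $S_5\subset S_6$ is indispensable) is by far the deepest and most delicate part of the argument, the $n=3$ case being essentially classical.
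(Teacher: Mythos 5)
Your outline is essentially correct and follows the same route as the actual proofs: the paper itself gives no argument for this theorem but quotes it directly from \cite{GGS}, \cite{hcl3}, and \cite{hcl4}, and your sketch (explicit multiplication tables with polynomial structure constants, $G(\Z)$-equivariance, reconstruction of the forms from normalized bases of $R$ and its resolvent, the degree-and-invariance argument pinning down the discriminant identity, existence of resolvents for all rings and uniqueness for maximal ones) is an accurate summary of how those works establish it. The only caveat is that your description of the inverse map is looser than the originals --- for $n=4$ and especially $n=5$ the element of $V(\Z)$ is assembled from the resolvent map between $R/\Z$ and $R'/\Z$ rather than from the multiplication tables alone --- but you correctly flag this resolvent machinery as the deep part, so there is no gap in the intended argument.
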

Recall that a {\it ring of rank $n$} is a ring $R$ (commutative, with
unit) such that $R$ is free of rank $n$ as a $\Z$-module.  A ring of
rank 2, 3, 4, 5, or 6 is called a {\it quadratic}, {\it cubic}, {\it
  quartic}, {\it quintic}, or {\it sextic} ring, respectively. A
resolvent ring $R'$ of a cubic, quartic, or quintic ring $R$ is a
quadratic, cubic, or sextic ring, respectively, that satisfies certain
properties, whose precise definition will not be needed here (see
\cite{hcl3} and \cite{hcl4} for details).  A ring $R$ of rank $n$ is
said to be {\it maximal} if it is not a proper subring of any other
ring of rank $n$; equivalently, $R$ is maximal if it is the maximal
order in a product of number fields.

Now a ring $R$ of rank $n$ that has squarefree (or fundamental)
discriminant is automatically maximal.  In particular, such a ring
will arise exactly once in the bijection of Theorem~\ref{bij}.  We are
interested, however, only in those maximal rings of rank $n$ that are
actually orders in $S_n$-number fields (rather than, say, in a
nontrivial product of number fields).  We thus say that a point $v\in
V(\Z)$ is {\it generic} if the ring $R$ of rank $n$ associated to it
under the bijection of Theorem~\ref{bij} is an order in a $S_n$-number
field of degree $n$.  With this definition of generic in hand, we may
now use the results of \cite{Davenport2,dodqf,dodpf} to show 
that the polynomials $f_n$ ($n=3$, 4, 5) satisfy all the axioms of
Section~2.

\subsection{Verification of axioms for the polynomials $f_3$, $f_4$,
  and $f_5$}

Let again $n=3$, 4, or 5. 
That Conditions 1--5 of the axioms of Section 2 are satisfied by
$f_n$ can be deduced directly from the works \cite{Davenport2},
\cite{dodqf}, and \cite{dodpf} respectively.  Indeed, Condition~1
follows immediately from Hilbert irreducibility or
\cite[\S4]{Davenport2}, \cite[\S2.4]{dodqf}, or
\cite[\S3.2]{dodpf}.  Condition~2 follows from \cite[Lines~4, 8, and~11]{SK}.
For Condition 3, we simply take
$I=f_n$.  Condition~4 is then \cite[\S2]{Davenport2}, \cite[\S2.1]{dodqf},
or \cite[\S2.1]{dodpf}, while Condition 5 is \cite[\S5]{DH},
\cite[Eqn.~(32)]{dodqf}, or \cite[Thm.~17]{dodpf},
respectively.

It remains to check the crucial Condition 6.  We will prove
Condition~6 for $f_3$, $f_4$, and $f_5$ with $a=2$, $c={n\choose2}$,
and $k=0$.  Condition~6(ii) is then automatically satisfied
(noting that genericity is a $G(\Q)$-invariant condition and that
$k=0$).  Also, since $m=d$ in these cases, Condition 6(iv) is also then
automatically satisfied.

We now verify Conditions 6(i) and (iii).  The proofs of these two
important subconditions are where the ``largeness'' of the symmetry group
$G$ of the polynomial $f_n$ is used.

We begin by noting that a general (i.e., nondegenerate) element of $V({\F_p})$
determines $n$ distinct points in $\P^{n-2}(\bar\F_p)$.  Indeed, when
$n=3$, we have a binary cubic form, which generally has 3 zeros in $\P^1$.
Similarly, when $n=4$, we have a pair of conics in $\P^2$, which
generally intersect in 4 points in $\P^2$.  And when $n=5$, we have
four $5\times 5$ skew-symmetric matrices, the $4\times 4$
sub-Pfaffians of which give quadrics in $\P^3$ that generally intersect
in 5 points in $\P^3$. The discriminant of an element $\bar x\in
V(\F_p)$ vanishes precisely when two or more of these $n$ points come
together, or when $\bar x$ is so degenerate that the variety that is
cut out by $\bar x$ in $\P^{n-2}$ is of dimension greater than zero.

The latter case, where a variety of dimension greater than zero is cut
out by $x\in V(\F_p)$, happens on an algebraic set (defined over
$\Z$) that is of codimension greater than one in $V(\F_p)$.
Similarly, the case where strictly fewer than $n-1$ points cut are out
by $x$ (not including multiplicity) also occurs on a set of
codimension greater than one in $V(\F_p)$.  Indeed, these two sets in
$V(\F_p)$ together comprise the image of the set $W_p^{(1)}$ in $V(\F_p)$.

The image of the set $W_p^{(2)}$ in $V(\F_p)$ consists of
elements $x\in V(\F_p)$ that cut out $n$ points (counting
multiplicity) in $\P^{n-2}(\bar\F_p)$, such that two of those $n$
points are the same and the rest are distinct and different.
Thus we have a description of those points in $V({\Z/p\Z})$ on
which the discriminant polynomial $f_n$ vanishes (mod $p$) that
potentially lift to points in $V(\Z/p^2\Z)$ where $f_n$ is {weakly}
a multiple of $p^2$.

To determine what precisely the set is in $V({\Z/p^2\Z})$ where the
discriminant is weakly a multiple of $p^2$, we consider each $n=3$, 4,
or 5 separately.  If $n=3$, then we see from the above discussion that
the image of $W_p^{(2)}$ in $V(\F_p)$ consists only of binary cubic
forms over $\F_p$ having a double (but not triple) root in $\P^1$.  A
binary cubic form in $V(\F_p)$ having a double root in $\P^1$ is
always $G(\F_p)$-equivalent to one of the form $\bar x(s,t)=\bar
as^3+\bar bs^2t$, where $\bar a,\bar b\in \F_p$ and $\bar b\neq 0$.
If $x(s,t)=as^3+bs^2t+cst^2+dt^3\in V(\Z)$ is a lift of $\bar x$ to
$V(\Z)$, then $c$ and $d$ are multiples of $p$, \,$b$ is prime to $p$,
and we compute that the discriminant $f_3(x)$ of $x$ is given by
\[f_3(x) \equiv -4 b^3d \mbox{ (mod $p^2$)} \]
implying (for $p>2$) that $d$ must then be a multiple of $p^2$ for $x$
to have discriminant that is (weakly) a multiple of $p^2$. 

Given such a form $x(s,t)=as^3+bs^2t+cst^2+dt^3\in V(\Z)$, with $p\nmid
b$, $p\mid c$, and $p^2\mid d$, we may multiply $x$ by $p$ and then
apply $g=\bigl[\begin{smallmatrix} 1 & \\ &
  1/p\end{smallmatrix}\bigr]\in\GL_2(\Q)$ to obtain a form
$x'(s,t)=pas^3+bs^2t+(c/p)st^2+(d/p^2)t^3\in V(\Z)$, and we see that
$f_3(x')=f_3(x)/p^2$.  This proves Condition~6(i) for $f_3$ with $a=2$.

To prove Condition 6(iii), note that
an element
$x'(s,t)=a's^3+b's^2t+c'st^2+d't^3\in V(\Z)$ may be sent to an element
$x\in V(\Z)$ of the above type via the inverse transformation $g^{-1}$
precisely
when $p\mid a'$ but $p\nmid b'$.  It follows that the
$\GL_2(\Z)$-class of $x'$ can lead to at most 3 $\GL_2(\Z)$-classes of
elements $x\in V(\Z)$ in this way (since $x'$ can have at most 3 simple
roots in $\P^1(\F_p)$), yielding Condition~6(iii) with $c=3$.  

The case $n=4$ can be treated similarly.  The image of $W_p^{(2)}$ in
$V(\F_p)$ consists only of pairs $(\bar A,\bar B)$ of ternary
quadratic forms over $\F_p$ that have three common zeroes in
$\P^2(\bar \F_p)$ (i.e., two simple common zeroes, and one common zero
of multiplicity two).  By a transformation in $\SL_3(\F_p)$, we may
assume that that the common zero of multiplicity two in $\P^2$ of such
an element $(\bar A,\bar B)\in V(\F_p)$ is at $[1:0:0]\in
\P^2(\F_p)$.  Furthermore, via a transformation in $\GL_2(\F_p)$ we
may assume that $\bar B$ cuts out a union of two distinct lines in
$\P^2(\F_p)$ that intersect at $[1:0:0]\in \P^2$, and $\bar A$ cuts
out a nonsingular conic passing through the intersection point
$[1:0:0]$ of those two lines, but not tangent to either of these two
lines.

It follows that an element in the image of $W_p^{(2)}$ in $V(\F_p)$ will always be $G({\F_p})$-equivalent to an element $(\bar A,\bar B)\in V(\F_p)$ of the matrix form 
$$\left(\left[\begin{array}{ccc}
\bar a_{11}  & \bar a_{12} & \bar a_{13}\\
\bar a_{12} & \bar a_{22} & \bar a_{23} \\
\bar a_{13} & \bar a_{23} & \bar a_{33} 
\end{array}\right], 
\left[\begin{array}{ccc}
\bar b_{11} & \bar b_{12}  & \bar b_{13} \\
\bar b_{12} & \bar b_{22} & \bar b_{23} \\
\bar b_{13} & \bar b_{23} & \bar b_{33} 
\end{array}\right]\right)
$$ where $\bar a_{11} = \bar b_{11} = \bar b_{12} = \bar b_{13} = 0$ and $\bar b_{22}\bar b_{33} - \bar b_{23}^2 \neq 0$.

Now if $(A,B)\in V(\Z)$ is a lift of $(\bar A, \bar B)$ to $V(\Z)$, where $A$ and $B$ have entries $a_{ij}$ and $b_{ij}$ respectively, then $a_{11}$, $b_{11}$, $b_{12}$, and $b_{13}$ are multiples of $p$ and $b_{22}b_{33}-b_{23}^2$ is prime to $p$.  In that case, we compute the discriminant $f_4((A,B))$ of $(A,B)$ to be
\[f_4((A,B)) = \Disc(\det(As + Bt) \equiv  b_{11} (b_{22}b_{33}-b_{23}^2) C^3\mbox{ (mod $p^2$)}, \]
where $C$ is the coefficient of $s^2t$ in $\det(As + Bt)$; this implies that $b_{11}$ must be a multiple of $p^2$ (and $C$ not a multiple of $p$) for $f$ to be (weakly!) a multiple of $p^2$ at $(A,B)$.  

Given such an element $(A,B)\in V(\Z)$, with $a_{11}\equiv b_{12}
\equiv b_{13}\equiv 0$ (mod $p$), $b_{11}\equiv 0$ (mod $p^2$),
$b_{22}b_{33}-b_{23}^2\not\equiv 0$ (mod $p$), and $C\not\equiv 0$
(mod $p$), we may multiply $A$ by $p$ and then divide the first row
and column of both $A$ and $B$ by $p$---this corresponds to the
application of the transformation 
\begin{equation}\label{f4t}
g=\left(\left[\begin{array}{cc} 1 &
  \\ & 1/p\end{array}\right],\left[\begin{array}{ccc} 1/p & & \\
  & 1 & \\ & & 1\end{array}\right]\right)\in G(\Q).
  \end{equation}  
  Hence we
obtain an element $(A',B')\in V(\Z)$ such that $f_4((A',B'))=f_4((A,B))/p^2$,
yielding Condition~6(i) with $a=2$.

To check Condition 6(iii), we note that
an element $(A',B')\in V(\Z)$, where $A'$ and $B'$ have
entries $a'_{ij}$ and $b'_{ij}$ respectively, may be sent to an
element $(A,B)\in V(\Z)$ as above via the inverse transformation
$g^{-1}$ precisely when $a'_{22}\equiv a'_{23}\equiv a'_{33}\equiv 0$ (mod
$p$), i.e., the ternary quadratic form $A'$ (mod $p$) factors into two rational
linear factors with a distinguished linear factor, namely, $x$ (where
$x$ denotes the first variable of the quadratic form), which vanishes
at two of the four common points of intersection of $(A',B')$ in
$\P^2(\bar \F_p)$. It follows that the $G(\Z)$-class of $(A',B')$ can
lead to at most six $G(\Z)$-classes of elements $x\in V(\Z)$ in this way
(since there are at most $6={4 \choose 2}$ lines in $\P^2(\F_p)$
passing through two of four given points), yielding Condition~6(iii) with $c=6$.

The case $n=5$ is more difficult to treat due to the complexity of the
discriminant polynomial, but in the end the same idea still applies.
The image of $W_p^{(1)}$ in $V(\F_p)$ consists of quadruples $(\bar
A,\bar B,\bar C,\bar D)$ of $5\times 5$ skew-symmetric matrices over
$\F_p$ whose $4\times 4$ sub-Pfaffians have four common zeroes in
$\P^3$ (i.e., three simple common zeroes, and one double common zero).
Recall that these common zeroes in $\P^3(\F_p)$ correspond to the linear
combinations of $\bar A,\bar B,\bar C,\bar D$, up to scaling, that
yield rank 2 skew-symmetric matrices.

By a change of basis in $\GL_4(\F_p)$, we may assume that $\bar A$ is
the rank 2 matrix that corresponds to the common double zero in
$\P^3(\F_p)$ of $(\bar A,\bar B,\bar C,\bar D)\in V(\F_p)$.  By a
change of basis in $\SL_5(\F_p)$, we may assume that $\bar A$ is the
$5\times 5$ matrix having $\pm 1$ in the (1,2)- and (2,1)-entries, and
zeroes elsewhere.  We may then use a further
$\GL_4(\F_p)$-transformation to clear out the (1,2) and (2,1) entries
of $\bar B$, $\bar C$, and $\bar D$.
  
We now claim that, after a suitable
$G(\F_p)$-transformation, $(\bar A,\bar B,\bar C,\bar D)$ can be expressed in the form
\begin{equation}\label{reduct}
\left(
\left[\begin{matrix}0&1&0&0&0\\-1&0&0&0&0\\0&0&0&0&0\\0&0&0&0&0\\
0&0&0&0&0\end{matrix}\right], 
\left[\begin{matrix}0&0&\ast&\ast&\ast\\0&0&\ast&\ast&\ast\\\ast&\ast&0&1&0\\\ast&\ast&-1&0&0\\ \ast&\ast&0&0&0\end{matrix}\right], 
\left[\begin{matrix}0&0&\ast&\ast&\ast\\0&0&\ast&\ast&\ast\\\ast&\ast&0&0&1\\\ast&\ast&0&0&0\\ \ast&\ast&-1&0&0\end{matrix}\right], 
\left[\begin{matrix}0&0&\ast&\ast&\ast\\0&0&\ast&\ast&\ast\\\ast&\ast&0&0&0\\\ast&\ast&0&0&0\\ \ast&\ast&0&0&0\end{matrix}\right]\right),
\end{equation}
where the $\ast$'s denote elements of $\F_p$.  

Indeed, if $\bar A$ corresponds to a double and only multiple common zero of $(\bar
A,\bar B,\bar C,\bar D)$, then the coordinate ring of the scheme cut
out by the $4\times 4$ sub-Pfaffians of $(\bar A, \bar B, \bar C,\bar
D)$ in $\P^3$, as given by (16)--(22) in \cite{hcl4}, is isomorphic to
$\F_p[\alpha_1]/(\alpha_1^2)\oplus K$, where $K$ is an \'etale cubic
$\F_p$-algebra.  In the notation of \cite{hcl4}, this means that we must have 
the equalities 
\begin{equation}\label{sl5conds}
Q(\bar A,M_1)\cdot M_2 \cdot Q(M_3,\bar A) = 0
\end{equation}
for any matrices $M_1$, $M_2$, and $M_3$ 
that are $\F_p$-linear combinations of $\bar B$, $\bar C$, and $\bar D$.  Now if the space spanned by the three
bottom right $3\times 3$ matrices of $\bar B$, $\bar C$, and $\bar D$
were three-dimensional, then by assuming that the bottom right $3\times 3$ submatrices of $\bar B$, $\bar C$, and $\bar D$ are 
$\left[\begin{smallmatrix} 0 & 1 & 0\\ -1 & 0 & 0\\ 0&0&0 \end{smallmatrix}\right]$, 
$\left[\begin{smallmatrix} 0 & 0 & 1\\  0 & 0 & 0\\ -1&0&0 \end{smallmatrix}\right]$, and 
$\left[\begin{smallmatrix} 0 & 0 & 0\\ 0 & 0 & 1\\ 0&-1&0 \end{smallmatrix}\right]$, respectively, we see that the conditions
(\ref{sl5conds}) would not hold since, e.g., $Q(\bar A,\bar B)\cdot \bar C \cdot Q(\bar D,\bar A)\neq 0$.  If this space were one-dimensional, then we see that the discriminant of (any lift to $V(\Z)$ of) $(\bar A,\bar B,\bar C,\bar D)$ would be strongly a multiple of $p^2$.  We conclude that this space must be two-dimensional, and a suitable
$\GL_4$-transformation then transforms $(\bar A,\bar B,\bar C,\bar D)$
into the form (\ref{reduct}). 

We now proceed in a manner similar to the case of $n=4$.  Let
$(A,B,C,D)\in V(\Z)$ be an element that reduces modulo $p$ to
(\ref{reduct}).  Then evaluating the discriminant function $f_5$ on this
element, we see that, for those values of the $\ast$'s in
(\ref{reduct}) where the discriminant is not strongly a multiple of
$p^2$, the discriminant of $(A,B,C,D)$ is a multiple of $p^2$
precisely when the (4,5)-entry of $A$ is a multiple of $p^2$.  In that
case, we can multiply $B$, $C$, and $D$ by $p$, and then divide the
fourth and fifth rows and columns of each of $A,B,C,D$ by $p$; this corresponds to
the transformation \begin{equation}\label{f5t}g=(\diag(1,p,p,p),\diag(1,1,1,1/p,1/p))\in G(\Q).\end{equation}
After applying this transformation, we obtain an element
$(A',B',C',D')\in V(\Z)$ such that \linebreak $f_5((A',B',C',D))=f_5((A,B,C,D))/p^2$,
again giving Condition~6(i) with $a=2$.

Finally, to check Condition 6(iii), note that an element
$(A',B',C',D')\in V(\Z)$ may be sent to such an element $(A,B,C,D)\in
V(\Z)$ via the inverse transformation $g^{-1}$ precisely when the top left
$3\times 3$ submatrices of $B',C',D'$ are all zero.  This means that
the fourth and fifth $4\times 4$ sub-Pfaffians of $wA'+xB'+yC'+zD'$
then factor and are both multiples of $w$; it follows that $w$ cuts
out a plane in $\P^3$ that passes through at least 3 of the 5 points
of intersection (counting multiplicity) in $\P^3$ of the $4\times 4$
sub-Pfaffians of $wA'+xB'+yC'+zD'$.  Hence each $G(\Z)$-class of
$(A',B',C',D')$ can lead to at most ${5\choose 3}=10$ $G(\Z)$-classes
of elements $(A,B,C,D)\in V(\Z)$ in this way, proving
that Condition~6(iii) holds with $c=10$.  This completes the proofs of
all the axioms for $f_3$, $f_4$, and $f_5$.

In summary, given a polynomial $f$ on a vector space $V$ that has
symmetry under the action of an algebraic group $G$ on $V$ (all
defined over $\Z$), our general strategy to extract squarefree values
taken by $f$ is to: a) describe geometrically or algebraically what it
means for a point in $V(\F_p)$ to have vanishing $f$ (mod $p$); b)
ascertain which lifts (mod $p^2$) of such points have strongly
vanishing and which have weakly vanishing $f$ (mod~$p^2$); c) treat
the points where $f$ strongly vanishes~(mod~$p^2$) via the geometric
sieve estimates in \S3.2; and, finally, d) for points $x\in V(\Z)$
where $f$ weakly vanishes~(mod~$p^2$), {effect transformations in
  $g\in G(\Q)$ so that the relevant $($mod~$p^2)$ conditions on~$x\in
  V(\Z)$ are transformed into $($mod~$p)$ conditions on $gx\in V(\Z)$}!
We will see that this strategy also works for discriminants of genus
one models in \S5. 

\begin{remark}\label{rmkf} {\em If the
    arguments of \S3.4 (with Theorem 3.5 used in place of
    Theorem~\ref{gsthm}) are applied to each set $H(u,s,\lambda,X)$,
    in the averaging method employed in \cite[\S2.2]{dodpf} (and its
    analogues in \cite[\S5.3]{BST} and \cite[\S2.2]{dodqf}), then one
    obtains Lemma~\ref{westimate} for $f_3$, $f_4$, and $f_5$ without
    the dependence on $\epsilon$ and without the $O(\epsilon X^{m/d})$
    term.  This may be used, e.g., to obtain power saving error terms
    in~(\ref{ld2}), via the methods
    of~\cite{BBP}, \cite{BST}, and \cite{ST}.}
\end{remark}

\subsection{Proof of Theorems~\ref{sqfdisc} and \ref{gensqfree}}
\label{appsieve}

We now consider the consequences for number fields of small degree having
squarefree discriminant.  Let $n\in\{3,4,5\}$.  
 Let $\Sigma=(\Sigma_\nu)_\nu$ denote a set of local
 specifications for degree $n$ number fields, i.e., $\Sigma_p$ is a
 set of (isomorphism classes of) \'etale degree $n$ extensions of
 $\Q_p$, such that for all $p$ larger than some constant $C$, we have
 that $\Sigma_p$ contains all unramified and simply ramified \'etale
 degree $n$ extensions of $\Q_p$.  

For each prime $p$, let $S_p$ denote the subset of
 points of $V(\Z)$ corresponding to rings $R$ such that $R\otimes\Z_p$
 gives the ring of integers of some \'etale extension in $\Sigma_p$.
 If we set $S=\cap_p S_p$, then the generic points of $S$ are
 exactly the points of $V(\Z)$ corresponding to rings $R$ that
 are the rings of integers in $S_n$-number fields of degree
 $n$ that agree with
 the local specifications of $\Sigma$.
Note also that $S$ then satisfies the hypotheses of \S3.4.  Hence Equation
(\ref{last}) holds for $S$. 
 
It thus remains only to compute $\Vol(\FF_1)\prod_p\mu_p(S)$ for the set
$S\subset V(\Z)$ corresponding to the local specifications
$\Sigma=(\Sigma_\infty,\Sigma_2,\Sigma_3,\ldots)$.  For $n=5$, this
has been carried out in \cite[Pf.\ of Lemma~20]{hcl4}, where it is shown that
\[\Vol(\FF_1)\prod_p\mu_p(S) =  \displaystyle{\Bigl(
\sum_{K\in\Sigma_\infty}\frac12\cdot
\frac1{\#\Aut(K)}\Bigr)\prod_p
\Bigl(\sum_{K\in\Sigma_p}}
\frac{p-1}p\cdot\frac1{\Disc_p(K)}\cdot\frac1{\#\Aut(K)} \Bigr),\]
and the identical arguments there show that the same formula holds also in the cases $n=3$ and $n=4$.  
We have proven Theorem~\ref{gensqfree}.

\begin{remark}{\em
Although we do not carry this out here, using the methods of
\cite{BBP,BST,ST}, it is possible to also estimate the $o(X^{m/d})$ in these various deductions and thus obtain a power-saving error term in~(\ref{last}).}
\end{remark}

Theorem~\ref{sqfdisc} is of course the particular case of Theorem~\ref{gensqfree},
where $\Sigma$ corresponds to the local specifications for fields of
squarefree (resp.\ fundamental) discriminant.  In the squarefree case,
$\Sigma_p$ is the set of all isomorphism classes of \'etale algebras
of dimension $n$ over $\Q_p$ such that $p^2\nmid\Disc(K)$.  To
complete the proof of Theorem~\ref{sqfdisc}, we must
evaluate
$$\sum_{K\in \Sigma_p} \frac{1}{\#\Aut(K)}\cdot\frac{1}{\#\Disc_p(K)}.$$
Since, for $p\neq 2$, $p^2\mid \Disc(K)$ is equivalent to $p$ at most
simply ramifying in $K$, while for $p=2$ it means that $p$ does not
ramify, we obtain by \cite[Prop.\ 2.2]{imrn} that
\begin{equation}\label{pmass}
\sum_{K\in \Sigma_p} \frac{1}{\#\Aut(K)}\cdot\frac{1}{\#\Disc_p(K)} = \left\{\begin{array}{cl} 1+1/p & \mbox{if $p\neq 2$} \\[.05in] 1 & \mbox{if $p=2$}\end{array}\right.\;.
\end{equation}
Let $\Sigma_\infty$ denote the set of all \'etale extensions of $\R$
of degree $n$.  Then \cite[Prop.\ 2.4]{imrn} gives
\begin{equation}\label{infmass}
\sum_{K\in \Sigma_\infty} \frac{1}{\#\Aut(K)} =  \frac{r_2(S_n)}{n!}.
\end{equation}
Combining (\ref{pmass}) and (\ref{infmass}), we obtain
$$\displaystyle{\Bigl(
\sum_{K\in\Sigma_\infty}\frac12\cdot
\frac1{\#\Aut(K)}\Bigr)\prod_p
\Bigl(\sum_{K\in\Sigma_p}}
\frac{p-1}p\cdot\frac1{\Disc_p(K)}\cdot\frac1{\#\Aut(K)} \Bigr) =\frac{r_2(S_n)}{2n!}\cdot\frac12\cdot\prod_{p\neq2}\Bigl(1-\frac1{p^2}\Bigr),$$
yielding Theorem~\ref{sqfdisc}(a).

To obtain Theorem~\ref{sqfdisc}(b), we simply must change the local conditions at
$p=2$ to include also simply ramified extensions in $\Sigma_2$.  This
replaces the value 1 for $p=2$ in (\ref{pmass}) by $1+\frac12$ (again,
by \cite[Prop.~2.2]{imrn}), thus multiplying the final constant in
Theorem~\ref{sqfdisc}(a) by $3/2$.  This proves Theorem~\ref{sqfdisc}(b).

\subsection{Unramified extensions of quadratic fields, and proof of Theorem~\ref{unrexts}}

It is known (see, e.g., \cite{Kondo}) that, for $n\leq
5$, an $S_n$-extension of $\Q$ is unramified over its quadratic
subfield precisely when its associated degree $n$ subfield is at most
simply ramified at all places.  Moreover, in this scenario, the
quadratic field ramifies exactly at the places where the degree $n$
field is simply ramified.  In particular, the quadratic field is real
precisely when the degree $n$ field $K$ is totally real, i.e.,
$K\otimes\R\cong\R^n$, and it is imaginary precisely when $K$
satisfies $K\otimes\R\cong \R^{n-2}\times\C$.

As in the proof of the fundamental discriminant case of
Theorem~\ref{sqfdisc}, let $\Sigma_p$ denote the set of all unramified
or simply ramified \'etale extensions of $\Q_p$; furthermore, let
$\Sigma_\infty=\{\R^n\}$.  Then by the arguments of
Theorem~\ref{sqfdisc}, we have that the number of degree $n$ fields
that are simply ramified at all places and have absolute discriminant
less than $X$ is
 $$\displaystyle{\Bigl(
\sum_{K\in\Sigma_\infty}\frac12\cdot
\frac1{\#\Aut(K)}\Bigr)\prod_p
\Bigl(\sum_{K\in\Sigma_p}}
\frac{p-1}p\cdot\frac1{\Disc_p(K)}\cdot\frac1{\#\Aut(K)} \Bigr)\cdot X + o(X),$$ 
which evaluates to 
\begin{equation}\label{realcount}
\frac{1}{2n!}\prod_p\Bigl(1-\frac{1}{p}\Bigr)\Bigl(1+\frac1p\Bigr)\cdot X + o(X) = \frac{1}{2n!}\cdot\zeta(2)^{-1}.
\end{equation}
On the other hand, the number of real quadratic fields having
discriminant less than $X$ is $\frac12\zeta(2)^{-1}\cdot X+o(X)$.  We
conclude that the average number of unramified $(A_n,S_n)$-extensions
of real quadratic fields, over all real quadratic fields of
discriminant less than $X$, as $X\to\infty$, is
$$\frac{ \,\,\,\displaystyle{\frac{1}{2n!}}\zeta(2)^{-1}\,\,\,}{\displaystyle{\frac{1}{2}\zeta(2)^{-1}}} = \frac{1}{n!},$$
yielding Theorem~\ref{unrexts}(a).  

The proof of Theorem~\ref{unrexts}(b) is similar.  We put instead
$K_\infty=\{\R^{n-2}\times\C\}$; this changes the factor of
$\frac{1}{2n!}$ in (\ref{realcount}) to $\frac{1}{2\cdot2(n-2)!}$.
Since the number of imaginary quadratic fields of absolute
discriminant less than $X$ is again $\frac12\zeta(2)^{-1}\cdot
X+o(X)$, we conclude that the average number of unramified
$(A_n,S_n)$-extensions of imaginary quadratic fields over all
imaginary quadratic fields having absolute discriminant less than $X$,
as $X\to\infty$, is
$$\frac{ \displaystyle{\,\,\,\!\frac{1}{2\cdot2(n-2)!}\zeta(2)^{-1}}\!\,\,\,}{\displaystyle{\frac{1}{2}}\zeta(2)^{-1}} = \frac{1}{2(n-2)!},$$
yielding Theorem~\ref{unrexts}(b).

\begin{remark}{\em 
The same argument can also be used to show that the constants occurring in
Theorem~\ref{unrexts} remain the same even when one averages only over
quadratic fields satisfying any descired local conditions at finitely
many primes.}
\end{remark}

\begin{remark}{\em 
We note that analogous results can be proved also for $A_n$-extensions of quadratic fields that are unramified away from some finite set of primes.  For example, if we are interested only in ``weakly unramified extensions'', that is, extensions unramified at all finite places, then the analogous methods would apply; in Theorem~\ref{unrexts}(a)--(b), the constants $1/n!$ and $1/(2(n-2)!)$ would then be replaced by $r_2^+(S_n)/n!$ and $r_2^-(S_n)/n!$, respectively, where $r_2^+(S_n)$ and $r_2^-(S_n)$ denote the number of 2-torsion elements in $S_n$ having signature $+1$ and $-1$, respectively.}
\end{remark}

Next, we may prove by the identical arguments that an unramified
$S_n\times C_2$-extension $M$ of a quadratic field $F$ necessarily
arises as the compositum of the quadratic field $F$ and the Galois
closure $L$ of a number field $K$ of degree $n$ having fundamental
discriminant.  In that case, $\Gal(L/F)=S_n$, and for $M=LF$ to be
unramified over $F$ at all finite places, it is necessary and
sufficient that $\Disc(K)$ divides $\Disc(F)$.  Furthermore, for
$M=LF$ to be unramified over $F$ also at the infinite places, it is
necessary and sufficient that $K$ be totally real if $F$ is real, and
that $K\otimes\R\cong \R^{n-2}\times\C$ if $F$ is imaginary.

Let $N$ be any positive squarefree integer.  Then for $X>N$
sufficiently large, we see that the total number of unramified
extensions of real quadratic fields, where we range over all quadratic
fields of absolute discriminant at most $X$, is $\gg$
\[ \frac{X}{d_1}+\frac{X}{d_2}+\cdots+\frac{X}{d_N},\]
where the $k$-th term above corresponds, by the proof of Theorem~\ref{sqfdisc}(b),
to the count of pairs $(K,F)$, where $K$ is a degree $n$ number field
having fundamental discriminant prime to $d_k$, and
$\Disc(F)=d_k\cdot\Disc(K)$; here $d_k$ ranges over all integer
ratios $\Disc(F)/\Disc(K)$ that are possible for such $(K,F)$ in which
$\Disc(K)\mid \Disc(F)$ and $\Disc(K)\neq \Disc(F)$.  The above sum
$\gg X\log\,X$, proving Theorem~\ref{unrexts}(c).  The argument for
Theorem~\ref{unrexts}(d)---the case of imaginary quadratic fields---is identical.

\section{The density of squarefree values taken by the polynomials
  $g_2$, $g_3$, $g_4$, $g_5$}

Again, most of the axioms of Section 2 follow for the polynomials
$g_2$, $g_3$, $g_4$, $g_5$ of \S1.4, 
using the geometry-of-numbers works \cite{BS,TC,foursel,fivesel}.  In
this section, we outline how to deduce Conditions~1--6 for these
polynomials from the results of these works, with an emphasis again on
Condition~6. 

First, we define a genus one model of degree 2 over $\Q$ (or a
corresponding element of $V(\Q)$) to be {\it generic} if none of the
four ramification points (viewed as a double cover of $\P^1$) is rational
(i.e., the corresponding binary quartic form has no linear factor over
$\Q$).  Similarly, we define a genus one model of degree 3, 4, or 5
over $\Q$ 
(or a corresponding element of $V(\Q)$) to be {\it generic} if it does
not have a rational hyperosculation point (e.g., in the case of
degree 3: does not have a rational flex point).  Alternatively, a genus
one model of degree $n=2$, 3, 4, or 5 over $\Q$ is generic if it does
not correspond to the trivial element of the $n$-Selmer group of its
Jacobian.

Next, we use the following groups $G$ of symmetries of $g_n$
(as a polynomial on $V$) for $n\in\{2,3,4,5\}$:

\begin{itemize}
\item[$n=2:$\!\!\!\!\!\!\!\!]\quad\,
$G=\PGL_2$.  Note that $\gamma\in\GL_2$ naturally acts on a binary quartic form
$x(s,t)\in V$ by $$\gamma\cdot x(s,t)=(\det \gamma)^{-2}x((s,t)\cdot\gamma)$$
yielding an action of $\PGL_2$ on $V$.

\item[$n=3:$\!\!\!\!\!\!\!\!]\quad\,
$G=\PGL_3$. In this case, $\gamma\in\GL_3$ naturally acts on a ternary cubic form
$x(r,s,t)\in V$ by $$\gamma\cdot x(r,s,t)=(\det \gamma)^{-1}x((r,s,t)\cdot\gamma)$$
inducing an action of $\PGL_3$ on $V$.

\item[$n=4:$\!\!\!\!\!\!\!\!]\quad\,
$G=\{(\gamma_2,\gamma_4)\in\GL_2\times\GL_4:\det(\gamma_2)\det(\gamma_4)=1\}/
\{(\lambda^{-2}I_2,\lambda I_4)\},$
where $I_2$ and $I_4$ denote the identity elements of $\GL_2$ and $\GL_4$, and
$\lambda\in\mathbb G_m$.

\item[$n=5:$\!\!\!\!\!\!\!\!]\quad\,
$G=\{(\gamma_1,\gamma_2)\in\GL_5\times\GL_5:\det(\gamma_1)^2\det(\gamma_2)=1\}/\{(\lambda
I_5,\lambda^{-2} I_5)\},$
where $I_5$ denotes the identity element of $\GL_5$ and
$\lambda\in\mathbb G_m$.
\end{itemize}
For $n\in\{2,3,4,5\}$, one easily checks that $g_n$ is an invariant polynomial
for the action of $G$ on $V$.

With these definitions of the groups $G$ and the notion of generic in
hand, Condition~1 then follows again from Hilbert irreducibility or
\cite[\S2.2]{BS}, \cite[\S2.5]{TC}, \cite[\S3.5]{foursel}, and
\cite[3.6]{fivesel}, and Condition~2 from \cite[Thm.~3.2]{BS},
\cite[Prop.~28]{TC}, \cite[Prop.~7]{foursel}, and \cite[Thm.~7]{fivesel}.
For Condition~3, we take $I$ to be the height
$H=\max\{|c_4|^3,|c_6|^2\}$ on $V(\C)$, where $c_4$ and $c_6$ denote the
two generating invariants for the action of $G(\C)$ on $V(\C)$ (see
\cite{Fisher1} for constructions of these invariants).
Condition~4 is then obtained in \cite[\S2.1]{BS}, \cite[\S2.1]{TC},
\cite[\S3.1]{foursel}, and \cite[\S3.1]{fivesel}, while Condition 5 is
\cite[Thm.~2.11]{BS}, \cite[Thm.~17]{TC}, \cite[Thm.~19]{foursel}, and
\cite[Thm.~25]{fivesel}.

Finally, Conditions 6(i)--(iii) for $n=3$, $4$, and $5$ is contained
in \cite[Lem.~26]{TC}, \cite[Lem.~24]{foursel}, and
\cite[Lem.~28]{fivesel}, respectively, with $a=0$, $c=3$, and $k=2$.
Thus Subcondition 6(iv) is then automatically satisfied.  The proofs
of these three important subconditions 6(i)--(iii) are again where the
``largeness'' of the symmetry group $G$ of the polynomial $g_n$ is
used.  We describe now the special case of plane
cubics ($n=3$) in detail to illustrate.

First, recall that for any $n\in\{3,4,5\}$, a general element of
$V({\F_p})$ (i.e., an element for which the discriminant is nonzero)
determines a smooth genus one curve in $\P^{n-1}$ over $\F_p$.  The
discriminant of an element $\bar x\in V(\F_p)$ vanishes precisely when
the associated curve in $\P^{n-1}$ is not smooth, or when $\bar x$ is
so degenerate that the variety that is cut out by $\bar x$ in
$\P^{n-1}$ is of dimension greater than one.

The latter case, where a variety of dimension greater than one is cut
out by $x\in V(\F_p)$, happens on an algebraic set (defined over $\Z$)
that is of codimension greater than one in $V(\F_p)$.  Similarly, the
case where the associated curve in $\P^{n-1}$ has a cuspidal
singularity also occurs on a set of codimension greater than one in
$V(\F_p)$.  Indeed, these two sets in $V(\F_p)$ together comprise the
image of the set $W_p^{(1)}$ in $V(\F_p)$.

The image of the set $W_p^{(2)}$ in $V(\F_p)$ then consists of
elements $x\in V(\F_p)$ that cut out a genus one curve in $\P^{n-1}$ with a
nodal singularity. 
Thus we have a description of those points in $V({\Z/p\Z})$ on
which the discriminant polynomial $g_n$ vanishes (mod $p$) that
potentially lift to points in $V(\Z/p^2\Z)$ on which $g_n$ is {weakly}
a multiple of $p^2$.  

We may now determine precisely the set in $V({\Z/p^2\Z})$ where the
discriminant is weakly a multiple of $p^2$.
When $n=3$, then via a transformation in $G(\Z)$, we may assume that the plane cubic curve over $\F_p$ corresponding to a given element $x\in W_p^{(2)}$ has a node at $(0:0:1)\in\P^2(\F_p)$.  Thus the
corresponding ternary cubic form $\bar x(r,s,t)$ has $t^3$-, $rt^2$-, and
$st^2$-coefficients equal to zero. If $x(r,s,t)$ is a lift of $\bar x$ to
$V(\Z)$, then the $t^3$-, $rt^2$-, and
$st^2$-coefficients of $x(r,s,t)$ are multiples of $p$. 
Evaluating the discriminant of such an element $x$, we see that $g_3(x)\equiv
a_{333}h(x)\pmod{p^2}$, where $a_{333}$ is the coefficient of $z^3$ and $h(x)$ is
an irreducible polynomial in the coefficients of $x$. As $x\in
W_p^{(2)}$, we see that $h(x)\not\equiv0\pmod{p}$. Therefore, since
$p^2\mid g_3(x)$, we obtain that $p^2\mid a_{333}$. Now the element $\gamma$ defined by
\begin{equation}\label{eqmatgamma}
    \left(\begin{smallmatrix}
      1&&\\&1&\\&&p^{-1}
    \end{smallmatrix}\right)\cdot x
\end{equation}
has the same discriminant as $x$ and moreover is in $W_p^{(1)}$, since its $r^3$-, $r^2s$-, $rs^2$-, and $s^3$-coefficients are zero (mod~$p$). We therefore
obtain a discriminant-preserving map $\phi$ from $G(\Z)$-orbits on
$W_p^{(2)}$ to $G(\Z)$-orbits on $W_p^{(1)}$. This proves Conditions~6(i) and (ii) for $g_3$ with $a=0$ and $k=2$.

To prove Condition 6(iii), we note that
an element $x'\in V(\Z)$ may be sent to an element
$x\in V(\Z)$ of the above type via the inverse transformation $\gamma^{-1}$
only when $x'(r,s,t)$ is a multiple of~$t$.  It follows that the
$G(\Z)$-class of $x'$ can lead to at most 3 $G(\Z)$-classes of
elements $x\in V(\Z)$ in this way (since $x'$ can have at most 3 linear factors over $\F_p$), yielding Condition~6(iii) with $c=3$.  

\pagebreak
The cases $n=4$ and $n=5$ can be treated in an analogous fashion; we refer the
reader to \cite[Lem.~24]{foursel} and \cite[Lem.~28]{fivesel},
respectively.  

\begin{remark}\label{rmkg} {\em As in Remark~\ref{rmkf}, if the
    arguments of \S3.4 (with Theorem 3.5 used in place of
    Theorem~\ref{gsthm}) are applied to each set $B(n,t,\lambda,X)$,
    in the averaging method employed in \cite[\S2.3]{BS} (and its
    analogues in \cite[\S2.2]{TC}, \cite[\S3.2]{foursel}, and
    \cite[3.2]{fivesel}), then one again obtains Lemma~\ref{westimate} for
    $g_3$, $g_4$, and $g_5$ without the dependence on $\epsilon$ and
    without the $O(\epsilon X^{m/d})$ term.  As before, this may be
    used, for example, to obtain power saving error terms in~(\ref{ld2}), via the methods of~\cite{BBP},
    \cite{BST}, and \cite{ST}.}
\end{remark}

Condition 6, however, does {\it not} hold for the discriminant polynomial
$g_n$ when $n=2$.  Indeed, in the case $n=2$, the image of 
$W_p^{(2)}$ in $V(\F_p)$ consists of binary quartic
forms over $\F_p$ having exactly one double (but not triple) root in $\P^1$.  A
binary quartic form in $V(\F_p)$ having exactly one double root in $\P^1$ is
always $G(\F_p)$-equivalent to one of the form $\bar x(s,t)=\bar
as^4+\bar bs^3t+\bar cs^2t^2$
where $\bar a,\bar b,\bar c\in \F_p$ and $\bar c\neq 0$ (to prevent a triple root) and $\bar b^2-4\bar a\bar c\neq0$ (to prevent a second double root). If $x(s,t)=as^4+bs^3t+cs^2t^2+dst^3+et^4\in V(\Z)$ is a lift of $\bar x$ to $V(\Z)$, then $d$ and $e$ are multiples of $p$, \,$c$ and $b^2-4ac$ are prime to $p$, 
and we compute that the discriminant $g_2(x)$ of $x$ is given by
\[f(x) \equiv -4c^3(b^2 - 4ac)e \mbox{ (mod $p^2$)} \]
implying (for $p>2$) that $e$ must then be a multiple of $p^2$ for $x$
to have discriminant that is weakly a multiple of $p^2$.  

It is now easy to see that there is {\it no} transformation in $G(\Q)$ that removes the mod $p^2$ condition; in particular, unlike the cases $n=3$, 4, and 5, there is in general no transformation in $G(\Q)$ that maps such an element $x\in W_p^{(2)}$ to $W_p^{(1)}$.  (The best potential candidate is the transformation $\bigl(\begin{smallmatrix} 1 & \\ &p^{-1}\end{smallmatrix}\bigr)$, but this sends $x(s,t)=as^4+bs^3t+cs^2t^2+dst^3+et^4$ to $x'(s,t)=ap^2s^4+bps^3t+cs^2t^2+(d/p)st^3+(e/p^2)t^4$, which in general is again in $W_p^{(2)}$.)  The group $G(\Q)$ of symmetries of $g_2$ is too small to directly apply the methods of Section~3.

To remedy this problem, we use what we call the ``embedding sieve'', where we attempt to embed our orbit space $G(\Z)\backslash V(\Z)$, via some map $\phi$, into another orbit space $G'(\Z)\backslash V'(\Z)$ for which the group $G'(\Z)$ {\it is} sufficiently large.  Moreover, we assume that the invariant polynomial $g$ of interest on $V$ is mapped to a corresponding invariant polynomial $g'$ for the action of $G'$ on $V'$, i.e., for $x\in V(\Z)$, we have $g(x)=g'(\phi(x))$.  If an analogue of Condition 6 then holds for $g'$, $G'$, and $V'$, then the resulting estimate for $W_p^{(2)'}\subset V'(\Z)$ can be pulled back to give an estimate for 
$W_p^{(2)}\subset V(\Z)$, and this may be sufficient to deduce (\ref{localdensity}) for the polynomial $g$.

For example, for the space $V(\Z)$ of binary quartic forms, there are
a number of possibilities for the space $V'$ that yield the desired
estimates for $W_p^{(2)}$. We give here, for simplicity, an example of
$V'$ that we have already treated in the previous section, namely, the
representation $V'$ on which $f_4$ is an invariant polynomial!
Indeed, the group $\PGL_2$ may be viewed as the special orthogonal
group of the three-dimensional quadratic space $W$ of $2\times 2$
matrices of trace zero, with quadratic form $A_1$ given by the
determinant; the representation of $\PGL_2$ on the space $\Sym^4(2)$
of binary quartic forms can than be viewed as the action of $\SO(W)$
by conjugation on the space of self-adjoint operators $T : W \to
W$ with trace zero~\cite{AIT}.  Alternatively, we can view the latter
representation as the action of $\SO(W)$ on pairs $(A,B)$ of quadratic
forms $(A,B)$, where $A=A_1$ and $B$ is the quadratic form given by
$B(w,w)=\langle w,Tw\rangle$.  The space $V'(\Z)$ of pairs $(A,B)$ of
ternary quadratic forms can then be viewed as a representation of the
larger group $G'(\Z)=\GL_2(\Z)\times\SL_3(\Z)$, which has polynomial
invariant $f_4$.  We thus obtain a natural map
\begin{equation}\label{vtow0}
\phi:\PGL_2(\Z)\backslash\Sym^4(\Z^2)=G(\Z)\backslash V(\Z)\to G'(\Z)\backslash V'(\Z)=
\GL_2(\Z)\times\SL_3(\Z)\backslash \Z^2\otimes\Sym^2(\Z^3).
\end{equation}
Explicitly in terms of coordinates, the map $\phi$ is given by
\begin{equation}\label{vtow}
\phi: ax^4+bx^3y+cx^2y^2+dxy^3+ey^4\mapsto \left( \left[ \begin{array}{ccc}\phantom{0} & \phantom{0} & 1/2 \\ \phantom{0} & -1 & \phantom{0} \\ 1/2 & \phantom0 & \phantom0 \end{array} \right],
\left[ \begin{array}{ccc} e & d/2 & 0 \\ d/2 & c & b/2 \\ 0 & b/2 & a \end{array} \right]\right);
\end{equation}
see \cite[\S2.3]{melanie} and \cite[(30)]{BS}.  One easily checks then that for any $v\in V$, we have $g_2(v)=f_4(\phi(v))$.
Furthermore, it was shown in \cite[Prop.~2.16]{BS} that the map $\phi$ defined by (\ref{vtow0}) and (\ref{vtow})
is at most~12-to-1. 

Note that if $x(s,t)=as^4+bs^3t+cs^2t^2+dst^3+et^4$ satisfies $d\equiv 0$ (mod~$p$) and $e\equiv 0$~(mod~$p^2$), then even though there is no transformation in $G(\Q)$ that can be applied to $x$ to remove the mod~$p^2$ condition, there {\it is} a transformation in $G'(\Q)$ that removes the mod~$p^2$ condition on $\phi(x)$, namely, the transformation given by (\ref{f4t})!

We may now proceed as in Section 3.  Let $W_p^{(1)}$ and $W_p^{(2)}$ be the subsets of $V(\Z)$ as defined in \S3.4, and let $W_p^{(1)'}$ and $W_p^{(2)'}$ be their analogues for $V'(\Z)$. The estimate (\ref{w2est}) for $W_p^{(1)}$ is obtained in the identical manner.  Meanwhile, for small primes $p\leq X^{1/6}$, by the argument used to prove the individual estimates (\ref{indest}) for $W_p^{(1)}$ for $p\leq r$ (see also Remark~\ref{rmkf}) gives a useful estimate also for $W_p^{(2)}$; we have
\begin{equation}\label{indestwp2}
|\FF_X\cap W_p^{(2)}| = O(\max\{X^{5/6}/p^2,X^{4/6}\}) \mbox{ for all $p$}.
\end{equation}
We use this estimate for $p\leq X^{1/6}$.  

To handle $p>X^{1/6}$, we use the map $\phi$.  Let $\FF_X'$ be the analogue of $\FF_X$ for $G'$, $V'$, and $f_4$.  Then in the previous section, we have shown that
\begin{equation}\label{indestwpp2}
|\FF_X'\cap W_p^{(2)'}| = O(X/p^2) \mbox{ for all $p$}.
\end{equation}
Since we have a map $\phi:\FF_X\cap V(\Z)\to\FF_X'\cap V'(\Z)$ that is at most 12-to-1 and satisfies $g_2(x)=f_4(\phi(x))$, we conclude using (\ref{indestwp2}) and (\ref{indestwpp2}) that
\begin{equation}\label{indestw22}
\begin{array}{rcl}
|\FF_X\cap (\cup_{p>M} W_p^{(2)}| &=& \displaystyle{
\sum_{M<p\leq X^{1/6}} O(\max\{X^{5/6}/p^2,X^{4/6}\}) 
+\sum_{p>\max\{M,X^{1/6}\}} O( X/p^2)}
\\[.25in]
&=& O(X^{5/6}/\log M).
\end{array}
\end{equation}
The analogue of Lemma~\ref{westimate} for the space $V(\Z)$ of binary quartic forms then becomes
\begin{lemma}\label{westimate2}
We have $$\left|\FF_X
\cap(\cup_{p> M}W_p^\gen)\right|=O_\epsilon(X^{5/6}/\log M+X^{4/6})+O(\epsilon X^{5/6}),$$
where the implied constants
are independent of 
 $M$.
\end{lemma}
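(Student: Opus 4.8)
The plan is to obtain Lemma~\ref{westimate2} by assembling the estimates for strong and weak square-multiples that have already been put in place above, in exact parallel with the proof of Lemma~\ref{westimate}; the only structural change is that for $g_2$ the role played there by Condition~6 is taken over by the embedding sieve through $\phi$. Throughout one has $m=\dim V=5$ and $d=\deg g_2=6$, so that $X^{m/d}=X^{5/6}$ and $X^{(m-1)/d}=X^{4/6}$. First I would write $W_p^\gen=W_p^{(1)}\cup W_p^{(2)}$, where $g_2$ is strongly, respectively weakly, a multiple of $p^2$, and bound the two pieces separately.

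For the strong multiples, $g_2$ is irreducible, so Lemma~\ref{stronglemma} supplies a subscheme $Y\subset\A^5_\Z$ of codimension at least two into which every point of $W_p^{(1)}$ reduces modulo~$p$; applying Theorem~\ref{gsthm} to the bounded homogeneously expanding region $\FF_X^{1-\epsilon}$ and controlling the cusp cut-off by~(\ref{esmall}) gives exactly the bound~(\ref{w2est}), namely $O_\epsilon(X^{5/6}/(M\log M)+X^{4/6})+O(\epsilon X^{5/6})$. For the weak multiples one splits the primes at $p=X^{1/6}$: for $M<p\le X^{1/6}$ the box-covering argument behind~(\ref{indest}) yields the per-prime bound~(\ref{indestwp2}), which sums to $O(X^{5/6}/\log M)$; for $p>X^{1/6}$ one passes through the at-most-$12$-to-$1$ map $\phi$ of~(\ref{vtow0})--(\ref{vtow}) into the representation carrying $f_4$, for which $g_2(x)=f_4(\phi(x))$ and $\phi$ sends $W_p^{(2)}$ into $W_p^{(2)'}$, so that the bound~(\ref{indestwpp2}) pulls back and sums over $p>X^{1/6}$ again to $O(X^{5/6}/\log M)$; together these give~(\ref{indestw22}). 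Adding the $W_p^{(1)}$ and $W_p^{(2)}$ contributions and absorbing $X^{5/6}/(M\log M)$ into $X^{5/6}/\log M$ gives the lemma.

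I do not expect any real obstacle at this stage. The one conceptually new ingredient---that Condition~6 fails for $g_2$ because its symmetry group $G(\Q)$ is too small, and that this is repaired by embedding into the larger symmetry group $G'(\Q)$ of $f_4$, where the transformation~(\ref{f4t}) does remove the offending mod~$p^2$ condition on $\phi(x)$ even though no element of $G(\Q)$ removes it on $x$---has already been established, together with the at-most-$12$-to-$1$ property of $\phi$ and the identity $g_2=f_4\circ\phi$. Thus the proof of Lemma~\ref{westimate2} amounts to the bookkeeping step that collects~(\ref{w2est}), (\ref{indestwp2}), and~(\ref{indestwpp2}) into the single estimate~(\ref{indestw22}) and then simplifies the resulting error terms.
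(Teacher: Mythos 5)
Your proposal follows the paper's own derivation essentially verbatim: the same decomposition into strong and weak multiples, the bound (\ref{w2est}) via Lemma~\ref{stronglemma}, Theorem~\ref{gsthm} and (\ref{esmall}), the per-prime estimate (\ref{indestwp2}) for $M<p\leq X^{1/6}$, and the pullback of (\ref{indestwpp2}) through the at-most-$12$-to-$1$ map $\phi$ with $g_2=f_4\circ\phi$ for $p>X^{1/6}$, summed exactly as in (\ref{indestw22}). This is correct and takes the same approach as the paper.
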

As in Remark~\ref{rmkg}, the dependence on $\epsilon$ and the $O(\epsilon X^{5/6})$ term may again be removed if desired by combining with the averaging method of \cite{BS}.

The remainder of the argument in \S3.4 now gives (\ref{ld2}) for these
polynomials $g=g_2$, $g_3$, $g_4$, and $g_5$, which was already used in
\cite{BS,TC,foursel,fivesel} to determine the average orders of 2-, 3-,
4-, and 5-Selmer groups of elliptic curves over $\Q$.  Finally, it also then yields
Theorem~\ref{gthm} giving the density of squarefree values taken by  $g_2$, $g_3$,
$g_4$, and $g_5$.

\subsection*{Acknowledgments}

I am extremely grateful to Jordan Ellenberg, Benedict Gross, Jonathan
Hanke, Wei Ho, Kiran Kedlaya, Juergen Klueners, Hendrik Lenstra,
Henryk Iwaniec, Barry Mazur, Carl Pomerance, Bjorn Poonen, Peter
Sarnak, Arul Shankar, Frank Thorne, and Jerry Wang for many helpful
conversations.  This work was done in part while the author was at
MSRI during the special semester on Arithmetic Statistics.  The author
was also partially supported by NSF grant~DMS-1001828 and a Simons
Investigator Grant.

\end{document}